\newcommand*{\thisisamsart}{}
\theoremstyle{plain}
\newtheorem{thm}{Theorem}[subsection]
\numberwithin{thm}{section}
\newtheorem{lem}[thm]{Lemma}
\newtheorem{prop}[thm]{Proposition}
\newtheorem{cor}[thm]{Corollary}
\theoremstyle{definition}
\newtheorem{defn}[thm]{Definition}
\theoremstyle{remark}
\newtheorem{rmk}[thm]{Remark}
\newtheorem{eg}[thm]{Example}
\newcommand{\Spec}{\mathrm{Spec}\,}
\newcommand{\Gal}{\mathrm{Gal}\,}
\newcommand{\ead}{\email}
\begin{document}
\title{An equivalence between two approaches to limits of local fields}
\author{Jeffrey Tolliver}
\ead{tolliver@ihes.fr}
\address{Department of Mathematics, Johns Hopkins University\\ 3400 N Charles St,Baltimore, MD, USA 21218
\ifdefined\thisisamsart \newline \else \\ \fi 
Present Address: Institut des Hautes \'Etudes Scientifiques, 35 Route de Chartres\\ Bures-sur-Yvette France 91440}

\begin{abstract}Marc Krasner proposed a theory of limits of local fields in which one relates the extensions of a local field to the extensions of a sequence of related local fields.  The key ingredient in his approach was the notion of valued hyperfields, which occur as quotients of local fields.  Pierre Deligne developed a different approach to the theory of limits of local fields which replaced the use of hyperfields by the use of what he termed triples, which consist of truncated discrete valuation rings plus some extra data.  We study the relationship between Krasner's valued hyperfields and Deligne's triples.\end{abstract}

\maketitle
\section*{Acknowledgements}
 This project was suggested by Caterina Consani, and was funded by the Johns Hopkins University.
\section{Introduction}
In this paper, the term local field will denote a field which is complete with respect to a discrete valuation and has a perfect residue field of finite characteristic.  The following classical theorem is the fundamental result of local class field theory.

\begin{thm}\label{lcft}Let $K$ be a local field which has a finite residue field.  Then $\Gal(K^{s}/K)^{ab}$ is the profinite completion of $K^\times$.  Furthermore, $(\Gal(K^s/K)/\Gal(K^s/K)^i)^{ab}$ is the profinite completion of $K^\times/(1+\mathfrak{m}_K^i)$\footnote{$\Gal(K^s/K)^i\subseteq \Gal(K^s/K)$ denotes the $i$-th upper ramification subgroup of the absolute Galois group. The reader is referred to chapter IV of \cite{serre} for a definition.}.\end{thm}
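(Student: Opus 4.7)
The plan is to follow the classical cohomological development of local class field theory via class formations. The first step is to compute the local Brauer group: for every finite Galois extension $L/K$, I would construct a canonical injection $\mathrm{inv}_{L/K}\colon H^2(\Gal(L/K), L^\times) \hookrightarrow \mathbb{Q}/\mathbb{Z}$ with image $\tfrac{1}{[L:K]}\mathbb{Z}/\mathbb{Z}$. This reduces to (i) the unramified case, where the valuation identifies $H^2(\Gal(K^{ur}/K), (K^{ur})^\times)$ with $H^2(\hat{\mathbb{Z}}, \mathbb{Z}) \cong \mathbb{Q}/\mathbb{Z}$ after using Hilbert 90 to kill $H^1$ of the units, and (ii) the vanishing of $H^2$ of $(K^s)^\times$ over $K^{ur}$, which makes inflation from the unramified quotient an isomorphism.

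Next I would invoke Tate's theorem on cohomology of class formations. The fundamental class $u_{L/K} \in H^2(\Gal(L/K), L^\times)$ mapping to $\tfrac{1}{[L:K]}$ under $\mathrm{inv}_{L/K}$ is a generator, and cup product with it yields isomorphisms $\hat{H}^q(\Gal(L/K), \mathbb{Z}) \xrightarrow{\sim} \hat{H}^{q+2}(\Gal(L/K), L^\times)$ for every $q \in \mathbb{Z}$. Specializing to $q = -2$ produces the Artin reciprocity isomorphism $\Gal(L/K)^{ab} \cong K^\times/N_{L/K}(L^\times)$. Passing to the inverse limit over all finite Galois $L/K$ and noting that the norm subgroups form a cofinal system of open subgroups of finite index in $K^\times$ identifies $\Gal(K^s/K)^{ab}$ with the profinite completion $\widehat{K^\times}$, which gives the first assertion.

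For the second assertion, one must match the filtration of $K^\times$ by higher unit groups $1 + \mathfrak{m}_K^i$ with the upper ramification filtration. The strategy is to prove that, under reciprocity, $1 + \mathfrak{m}_K^i$ corresponds to the image of $\Gal(K^s/K)^i$ in $\Gal(K^s/K)^{ab}$; the quotient statement of the theorem then follows at once. The main obstacle is precisely this compatibility: it requires comparing the norm filtration on units of finite abelian extensions with the Herbrand $\psi$-function that relates lower and upper numbering, and extending the upper numbering filtration to infinite extensions rests on the Hasse--Arf theorem (that the breaks on any finite abelian extension occur at integers). The calculation reduces to the totally ramified case and is carried out explicitly using uniformizers and the logarithm on principal units, essentially as in chapter XV of \cite{serre}.
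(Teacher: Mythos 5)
Your sketch is a correct outline of the standard cohomological proof of local class field theory: computation of the local Brauer group via the unramified case and the vanishing of $\mathrm{Br}(K^{ur})$, Tate's theorem and cup product with the fundamental class to obtain the reciprocity isomorphisms $\Gal(L/K)^{ab}\cong K^\times/N_{L/K}L^\times$, the existence theorem (giving cofinality of norm subgroups) to pass to the profinite completion, and, for the ramification statement, the Hasse--Arf theorem together with the explicit comparison of the norm filtration with the higher unit groups. The paper itself does not prove this theorem; it records it as the classical fundamental result and refers the reader to Serre's \emph{Local Fields}, which develops precisely the argument you outline, so there is no discrepancy to address.
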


An incredible feature of the first part of the above theorem is that it gives a description of the abelian extensions in terms of the multiplicative structure on $K$, without making use of the addition operation.  It seems that it would be too much to hope for a nice description of the nonabelian extensions without using both the addition and multiplication operations.

The second part of the theorem above gives a description of abelian extensions $L/K$ satisfying the ramification condition $\Gal(L/K)^i=1$ (or the equivalent condition that $\Gal(K^s/L)\supseteq Gal(K^s/K)^i$).  The classification of such extensions depends solely on the multiplicative structure of the quotient $K/(1+\mathfrak{m}_K^i)=\{0\}\cup K^\times/(1+\mathfrak{m}_K^i)$ of $K$.  By analogy with the above paragraph, one might hope to understand all separable extensions which satisfy this ramification condition in terms of both the addition and multiplication operations on $K/(1+\mathfrak{m}_K^i)$.  One immediately runs in to the problem that the addition operation on this quotient is not well-defined.   Equivalently it may be regarded as a multivalued operation.  Because of this, we must introduce the definition of a \emph{hyperfield}, which is an analogue of a field with a multivalued addition operation.  Hyperfields were first defined by M. Krasner, and were inspired by the definition of a hypergroup by F. Marty\cite{krasner}\cite{marty}.

\begin{defn}A canonical abelian hypergroup $H$ consists of a set $H$ together with a multivalued operation $+:H\times H\rightarrow 2^H$ satisfying the following axioms\footnote{$2^H$ denotes the power set of $H$.}:

\begin{enumerate}[label=(\alph*)]
\item $(x+y)+z=x+(y+z)$  for all $x,y,z\in H$, where the left side is defined to mean $\bigcup_{t\in x+y} t+z$, and the right side is defined similarly.
\item $x+y=y+x$ for all $x,y\in H$.
\item There exists $0\in H$ such that $x+0=\{x\}$ for all $x\in H$.
\item For all $x\in H$ there is a unique element $-x\in H$ such that $0\in x+(-x)$.
\item For all $x,y,z\in H$, the inclusion $x\in y-z$ holds if and only if $y\in x+z$ holds.
\end{enumerate}

A multiring consists of a canonical abelian hypergroup $H$ together with a commutative associative unital operation $\cdot:H\times H\rightarrow H$ satisfying $(x+y)z\subseteq xz+yz$\footnote{The reader should note that unlike addition, the multiplication is single valued.}.  It is a hyperfield if every non-zero element has a multiplicative inverse and if $0\neq 1$.\end{defn}

The quotient $K/(1+\mathfrak{m}_K^i)$ carries the structure of a hyperfield in a canonical way, and this example is what motivated M. Krasner to define hyperfields in \cite{krasner}.  Hyperfields have more recently appeared in diverse settings in the work of A. Connes, C. Consani, M. Marshall, and O. Viro.  Because the reader is not expected to be familiar with such objects we shall give several examples in section \ref{examplessect} below.

The discussion following Theorem \ref{lcft} leads one to conjecture the following theorem, which follows immediately by combining the main result of this paper with the result of P. Deligne in \cite{deligne}.  It is worth noting that it is possible to have $K/(1+\mathfrak{m}_K^i)\cong F/(1+\mathfrak{m}_F^i)$ when $K$ and $F$ are distinct local fields.  Thus the following theorem (or the aforementioned result of P. Deligne) can provide a link between extensions of one local field with those of another, even when the fields involved have different characteristics.

\begin{thm}\label{delhypthm}Let $K$ be a local field.  Then the category of finite separable extensions $L/K$ satisfying $\Gal(K^s/L)\supseteq Gal(K^s/K)^i$ depends only on the isomorphism class of the hyperfield $K/(1+\mathfrak{m}_K^i)$.\end{thm}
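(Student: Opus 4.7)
The plan is to combine Deligne's theorem from \cite{deligne} with the main result of this paper. Deligne establishes that the category of finite separable extensions $L/K$ satisfying $\Gal(K^s/L) \supseteq \Gal(K^s/K)^i$ depends only on the isomorphism class of the Deligne triple attached to $(K,i)$, consisting of the truncated ring $\mathcal{O}_K/\mathfrak{m}_K^i$ together with some additional structure. Hence, to prove the theorem, it suffices to show that the hyperfield $K/(1+\mathfrak{m}_K^i)$ determines this triple up to canonical isomorphism, functorially in hyperfield isomorphisms. This is precisely the content one expects from an equivalence between Krasner's valued hyperfields and Deligne's triples, and is the main theorem to be proved in the body of the paper.

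The core of the argument is therefore to reconstruct, from the hyperfield $H := K/(1+\mathfrak{m}_K^i)$, both the ring $\mathcal{O}_K/\mathfrak{m}_K^i$ and Deligne's extra data, using only intrinsic hyperfield operations. First I would detect the valuation intrinsically by characterising the set of non-zero elements with non-negative valuation in purely hyperfield-theoretic terms, for instance through the behaviour of the hypersum $1+x$ (elements $x$ coming from $\mathcal{O}_K$ should be distinguished by the fact that such sums lie in the integral part, whereas elements of negative valuation behave differently). This cuts out a distinguished subset of $H$ which, together with $0$, will become the desired truncated ring: the key point is that on this subset the hyperaddition becomes single-valued, because any two integral representatives in $\mathcal{O}_K$ admit a sum whose image in $K/(1+\mathfrak{m}_K^i)$ is well-defined. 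The remaining pieces of Deligne's triple are then read off from the way the newly-identified subring interacts with the rest of $H$ via multiplication and hyperaddition.

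The main obstacle will be verifying that every piece of Deligne's additional structure is genuinely recoverable from the hyperfield alone, and conversely that the hyperfield can be rebuilt from the triple, so that the assignment is a bijection on isomorphism classes (and ideally an equivalence of suitable categories). In particular one must check that a hyperfield isomorphism $K/(1+\mathfrak{m}_K^i)\cong F/(1+\mathfrak{m}_F^i)$ necessarily carries the intrinsically-defined integral subset on one side to its counterpart on the other, so that the induced map of triples is well-defined. Once this equivalence is in hand, the theorem is immediate: such a hyperfield isomorphism yields an isomorphism of the associated Deligne triples, and Deligne's theorem then produces an equivalence between the corresponding categories of ramification-bounded extensions of $K$ and $F$.
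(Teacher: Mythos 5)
Your high-level strategy matches the paper exactly: invoke Deligne's result that the category of ramification-bounded separable extensions depends only on the triple $(\mathcal{O}_K/\mathfrak{m}_K^i,\mathfrak{m}_K/\mathfrak{m}_K^{i+1},\epsilon)$, then argue that this triple is recoverable from the hyperfield $K/(1+\mathfrak{m}_K^i)$ up to isomorphism. You also correctly identify that the real work is to show a plain hyperfield isomorphism (which a priori need not respect the absolute value) still induces an isomorphism of triples. The paper handles this in Proposition \ref{valirrel}, whose key device is the intrinsic characterization $|x|\leq|y| \iff (x-x)\subseteq(y-y)$ (with $x-x$ a ball of radius $\rho_H|x|$); from this one shows a hyperfield isomorphism multiplies all valuations by a fixed constant, which can then be normalized away. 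Your vaguer suggestion of using the hypersum $1+x$ could in principle be made to work, but it is not where the paper goes.

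There is one concrete error in your sketch. You claim that on the distinguished integral subset (the image of $\mathcal{O}_K$, i.e. the closed unit ball of $H$) ``the hyperaddition becomes single-valued, because any two integral representatives in $\mathcal{O}_K$ admit a sum whose image in $K/(1+\mathfrak{m}_K^i)$ is well-defined.'' This is false: the hyperaddition remains genuinely multivalued there. For instance $1 + (-1)$ in $K/(1+\mathfrak{m}_K^i)$ is the set of all classes of absolute value at most $\theta_H^i$, together with $0$. The truncated DVR $R$ of Deligne's triple is \emph{not} the closed unit ball of $H$ with its hyperaddition; it is the further quotient of $\mathcal{O}_H$ by the equivalence relation ``$d(x,y)\leq\rho_H$'' (the paper's $M_0 = \mathcal{O}_H/\!\equiv_{\rho_H}$), and it is only after collapsing each additive hypersum (which is a ball of radius $\leq\rho_H\max(|x|,|y|)$) to a single point that the addition becomes well-defined. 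Without this extra quotient your reconstruction of $R$ does not produce a ring, and the rest of the argument would not go through as stated.
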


Before discussing another motivation for studying the quotient $K/(1+\mathfrak{m}_K^i)$, we will need a few more definitions.   $K/(1+\mathfrak{m}_K^i)$ is not only a hyperfield but is also equipped with a non-archimedean absolute value, and this absolute value is related to the addition operation in a strong way.  M. Krasner introduced the following notion of valued hyperfield which abstracts this example, as well as a definition of homomorphisms of valued hyperfields.

\begin{defn}[{\cite[pg 144]{krasner}}]\label{defofvh}A valued hyperfield is a hyperfield equipped with a map $|\cdot|:H\rightarrow\mathbb{R}$ satisfying the following axioms:

(i) $|x|\geq 0$ with equality if and only if $x=0$.

(ii) $|xy|=|x| |y|$ for all $x,y\in H$.

(iii) $|x+y|\leq \mathrm{max}(|x|,|y|)$.

(iv) $|x+y|$ consists of a single element unless $0\in x+y$.  This axiom in particular implies that there is a well defined metric on $H$ given by $d(x,y)=|x-y|$ for $x\neq y$ and $d(x,x)=0$ for any  $x\in H$ (It is guaranteed to be a metric by axioms (i) and (iii)).

(v) There is a real number $\rho_H>0$ such that either $x+y$ is a closed ball of radius $\rho_H \mathrm{max}(|x|,|y|)$ for all $x,y\in H$, or $x+y$ is an open ball of radius $\rho_H \mathrm{max}(|x|,|y|)$ for all $x,y\in H$.  The smallest such $\rho_H$ is called the norm of the valued hyperfield.

Just as in the classical case, the valued hyperfield is said to be discretely valued if $0$ is the only non-isolated point in the image of the absolute value.\end{defn}

\begin{defn}[{\cite[pg 148]{krasner}}]\label{vhhom}A map $f:H_1\rightarrow H_2$ between valued hyperfields is called a homomorphism if the following axioms hold:

(i)  $f(xy)=f(x)f(y)$ for all $x,y\in H_1$.

(ii)  $f^{-1}(a+b)=f^{-1}(a)+f^{-1}(b)$ for all  $a,b\in f(H_1)$.\footnote{The reader familiar with hyperfields will note that this axiom is slightly stronger than the usual definition of a hypergroup homomorphism, which states that $f(x+y)\subseteq f(x)+f(y)$ for all $x,y\in H_1$.}

(iii)  $|f(x)|=|x|$ for all $x\in H_1$.

(iv)  The fiber over 1 is a ball.  Consequently, all fibers are balls.\end{defn}

M. Krasner was originally motivated to study the hyperfield $K/(1+\mathfrak{m}_K^i)$ in order to be able to define limits of local fields.  His idea was that one could study extensions of one local field $K$ by instead studying extensions of a whole sequence of local fields $K_i$, which do not necessarily have the same characteristic as $K$.

\begin{defn}Let $K$ be a local field and let $K_i$ be a local field for each $i\in\mathbb{N}$.  $K$ is said to be a limit of the sequence $\{K_i\}$ if there is an increasing sequence of natural numbers $\gamma_i$ such that there are isomorphisms $K/(1+\mathfrak{m}_K^{\gamma_i})\cong K_i/(1+\mathfrak{m}_{K_i}^{\gamma_i})$ of valued hyperfields.\end{defn}

Given a local field $K$ which is the limit of a sequence $\{K_i\}$ and given a finite separable extension $L/K$, M. Krasner has constructed the associated extension $L_i/K_i$. He has proven the following theorem.

\begin{thm}[{\cite[pg 201]{krasner}}]\label{krasthm}Let $K$ be the limit of a sequence of local fields $\{K_i\}$.  Let $L/K$ be a finite extension.  Let $L_i/K_i$ be the extensions of $K_i$ induced by $L/K$.  Then $L/K$ is Galois if and only if $L_i/K_i$ is Galois for all sufficiently large $i$.  In this case $\mathrm{Gal}(L_i/K_i)\cong \mathrm{Gal}(L/K)$ when $i$ is sufficiently large.\end{thm}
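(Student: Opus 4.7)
The plan is to reduce the theorem to Krasner's lemma via a careful choice of primitive element. First, I would fix an integral primitive element $\alpha \in \mathcal{O}_L$ with $L = K(\alpha)$ and let $f(x) \in \mathcal{O}_K[x]$ be its minimal polynomial of degree $n = [L:K]$, with roots $\alpha = \alpha^{(1)}, \ldots, \alpha^{(n)}$ in $K^s$. Via the hyperfield isomorphism $K/(1+\mathfrak{m}_K^{\gamma_i}) \cong K_i/(1+\mathfrak{m}_{K_i}^{\gamma_i})$, I would lift the coefficients of $f$ to a polynomial $f_i \in \mathcal{O}_{K_i}[x]$. For $\gamma_i$ sufficiently large relative to the discriminant of $f$, a continuity-of-roots argument (an application of Krasner's lemma to $f_i(x) - f(x)$) produces a canonical bijection $\alpha^{(j)} \leftrightarrow \alpha_i^{(j)}$ between the roots of $f$ and $f_i$ under which all pairwise differences $|\alpha^{(j)} - \alpha^{(k)}|$ correspond to $|\alpha_i^{(j)} - \alpha_i^{(k)}|$ via the identification of value groups. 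Krasner's construction then yields $L_i = K_i(\alpha_i^{(1)})$, so in particular $[L_i:K_i] = n$.

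Next, by Krasner's lemma the condition $\alpha^{(j)} \in L$ can be expressed purely in terms of the configuration of mutual distances among the conjugates $\alpha^{(k)}$ and their approximations by elements of $K(\alpha)$; this configuration is precisely what is preserved under the canonical bijection of roots. Hence for all $i$ sufficiently large, $\alpha^{(j)} \in L$ if and only if $\alpha_i^{(j)} \in L_i$. Since $L/K$ is Galois precisely when every $\alpha^{(j)}$ lies in $L$, the Galois condition transfers between $L/K$ and $L_i/K_i$.

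For the Galois group isomorphism in the Galois case, each $\sigma \in \mathrm{Gal}(L/K)$ is determined by the permutation $\pi_\sigma \in S_n$ defined by $\sigma(\alpha^{(j)}) = \alpha^{(\pi_\sigma(j))}$, giving a faithful representation $\mathrm{Gal}(L/K) \hookrightarrow S_n$. I would define $\sigma \mapsto \sigma_i$ by requiring $\sigma_i(\alpha_i^{(j)}) = \alpha_i^{(\pi_\sigma(j))}$ and verify it lands in $\mathrm{Gal}(L_i/K_i)$, is a group homomorphism, and is bijective (the last using the cardinality equality $|\mathrm{Gal}(L/K)| = n = |\mathrm{Gal}(L_i/K_i)|$ already furnished by the first half of the theorem).

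The main obstacle will be showing that the set-theoretic map $\sigma_i$ so defined is actually a $K_i$-algebra automorphism of $L_i$, and that the assignment $\sigma \mapsto \sigma_i$ respects composition. Both reduce to the claim that polynomial relations among the $\alpha^{(j)}$ are preserved under the canonical bijection of roots: concretely, if $\alpha^{(k)} = P(\alpha)$ for some $P \in K[x]$ of degree less than $n$, then $\alpha_i^{(k)} = P_i(\alpha_i^{(1)})$ where $P_i$ is the lift of $P$. This requires an effective form of Krasner's lemma with bounds controlled uniformly by the discriminant of $f$ and the pairwise distances of the conjugates; since those data depend only on $L/K$ and not on $i$, the required estimates hold once $\gamma_i$ is chosen sufficiently large.
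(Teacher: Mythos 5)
Note first that the paper does not prove this theorem: it is stated as a citation of Krasner's own result, so there is no in-paper argument to compare against. Your outline has the right broad shape (choose a primitive element, transfer its minimal polynomial, match roots, transfer the permutation action), but the central step is not well-posed. You propose to produce the bijection $\alpha^{(j)}\leftrightarrow\alpha_i^{(j)}$ by ``an application of Krasner's lemma to $f_i(x)-f(x)$.'' That makes no sense here: $f\in\mathcal{O}_K[x]$ and $f_i\in\mathcal{O}_{K_i}[x]$ live over fields that in general have different characteristics (indeed that is the whole point of the limit framework), there is no common overfield containing both $K^s$ and $K_i^s$, and there is no metric in which $|\alpha^{(j)}-\alpha_i^{(k)}|$ is even defined. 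Continuity of roots and Krasner's lemma are statements internal to a single complete valued field; they cannot be invoked across $K$ and $K_i$.

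The correct mechanism, which your write-up elides, must go through the hyperfield quotient itself. One has to show that, once $\gamma_i$ is large compared to $v(\mathrm{disc}\,f)$ and to the pairwise valuations $v(\alpha^{(j)}-\alpha^{(k)})$, the entire combinatorial/ramification profile of $f$ over $K^s$ (the multiset of those valuations, the partition of roots under $\Gal(K^s/K)$, the expressions of conjugates as polynomials in $\alpha$ modulo high powers of $\mathfrak{m}$) is determined by the image of $f$ in $K/(1+\mathfrak{m}_K^{\gamma_i})$, and then transport that data through the given isomorphism of valued hyperfields to read off the same profile for $f_i$ over $K_i^s$, using Krasner's lemma and Hensel lifting \emph{separately and internally} in each of $K^s$ and $K_i^s$. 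This intermediate step, establishing that the hyperfield image of $f$ controls its splitting behaviour to the required precision, is the real content, and it is exactly what your ``canonical bijection'' and your final paragraph about transferring polynomial relations $\alpha^{(k)}=P(\alpha)$ assume without proof. As written, the argument would also need to be explicit that the constant controlling ``sufficiently large $\gamma_i$'' depends only on $L/K$ (via the discriminant and root separations of $f$), since you use this uniformity both for existence of the bijection and for preservation of the Galois condition; you gesture at it but do not pin it down.
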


As an example of this phenomenon, it is shown in the author's PhD thesis that given a suitable infinite extension $L/K$, one may regard Wintenberger's field of norms $X_K(L)$ (as defined in \cite{wintenberger}) as a limit of the finite subextensions of $L/K$\cite{thesis}.  Furthermore, this thesis shows that one may recover Wintenberger's result which states that $\Gal(X_K(L)^s/X_K(L))\cong \Gal(L^s/L)$ from the theory of limits of local fields

P. Deligne has developed similar results to Theorems \ref{krasthm} and \ref{delhypthm} which replace the use of the hyperfield $K/(1+\mathfrak{m}_K^i)$ by the triple of data $(R,M,\epsilon)$ with $R=\mathcal{O}_K/\mathfrak{m}_K^i$, with $M=\mathfrak{m}_K/\mathfrak{m}_K^{i+1}$ and with $\epsilon:M\rightarrow R$ the map induced by the inclusion $\mathfrak{m}_K\subseteq\mathcal{O}_K$\cite{deligne}.  Just as M. Krasner defined a category of valued hyperfields which abstracts the properties of $K/(1+\mathfrak{m}_K^i)$, P. Deligne has defined a category of triples which abstracts the properties of $(\mathcal{O}_K/\mathfrak{m}_K^i,\mathfrak{m}_K/\mathfrak{m}_K^{i+1},\epsilon)$.  The key ingredient in his definition of a triple is the notion of a truncated DVR.  These notions are defined below.  For future convenience, we also define the valuation on a triple.

\begin{defn}[{\cite[1.1]{deligne}}]A truncated DVR\footnote{The name truncated DVR comes from the fact that such objects can be obtained as quotients of discrete valuation rings.} is a local Artinian ring whose maximal ideal is principal.  If $R$ is a truncated DVR and if $x\in R$, then we define $v_R(x)=\mathrm{sup} \{i\in \mathbb{N}\mid x\in \mathfrak{m}_R^i\}$, where $\mathfrak{m}_R$ is the maximal ideal.\end{defn}

\begin{defn}[{\cite[pg 126]{deligne}}]A triple $(R,M,\epsilon)$ consists of a truncated DVR $R$\footnote{P. Deligne requires $R$ to have a perfect residue field, but we will not include this in our definition.  Of course the relationship between valued hyperfields and triples proved in this paper will also hold if one includes this hypothesis in both the definition of valued hyperfields and that of triples}, a free $R$-module $M$ of rank 1, and a homomorphism $\epsilon:M\rightarrow R$ whose image is the maximal ideal $\mathfrak{m}_R$.  We define a integer valued function on $M^{\otimes i}$ by $v(am^{\otimes i})=i+v_R(a)$ for $a\in R$, where $m$ is a generator of M.\end{defn}

Note that for $s>r$, we can define a map $\epsilon_{r,s}:M^{\otimes s}\rightarrow M^{\otimes r}$ by $\epsilon_{r,s}(x^{\otimes s})=\epsilon(x)^{s-r}x^{\otimes r}$, where $x$ is a generator of $M$.  This map is used in the following definition of a morphism of triples. 

\begin{defn}[{\cite[1.4]{deligne}}]\label{tripmorph}  A morphism of triples $(r,f,\eta):(R,M,\epsilon)\rightarrow (R',M',\epsilon')$ consists of a homomorphism $f:R\rightarrow R'$, an integer $r$ (called the ramification index) and an $R$-linear map $\eta:M\rightarrow M'^{\otimes r}$, such that $f\epsilon=\epsilon_{0,r}'\eta$ and such that the induced map $M\otimes R' \rightarrow M'^{\otimes r}$ is an isomorphism of $R'$-modules.  We compose morphisms of triples by the formula $(r,f,\eta)(s,g,\theta)=(rs,fg,\eta^{\otimes s}\theta)$.\end{defn}

While P. Deligne had worked with triples, he had motivated his paper in terms of valued hyperfields instead.  He justified this by stating that his triples are essentially the same as Krasner's valued hyperfields.  Unfortunately, he did not give much indication as to how they are related, let alone a proof.  The goal of this paper is to understand the relation between these two notions.

One difference between the category of triples and the category of valued hyperfields is that the valuation on a triple is always discrete.  Thus we must restrict ourselves to working with discretely valued hyperfields.  Secondly, the category of discretely valued hyperfields contains not only the proper quotients $K/(1+\mathfrak{m}_K^i)$ of a local field $K$, but also $K$ itself.  On the other hand, there is no triple corresponding to the trivial quotient, so in order to relate discretely valued hyperfields with triples, we must exclude those discretely valued hyperfields which are fields.  One of the main results of this paper is the following.

\begin{thm}\label{mainthm1}There is a faithful essentially surjective functor $\mathrm{Tr}$ from the category of discretely valued hyperfields which are not fields to the category of triples.\end{thm}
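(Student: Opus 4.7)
The plan is to construct $\mathrm{Tr}$ by mimicking the example $K/(1+\mathfrak{m}_K^i)\mapsto(\mathcal{O}_K/\mathfrak{m}_K^i,\mathfrak{m}_K/\mathfrak{m}_K^{i+1},\epsilon)$ of Deligne. Given a discretely valued hyperfield $H$ that is not a field, let $q$ be the largest element of $|H|$ strictly less than $1$; by discreteness the norm satisfies $\rho_H=q^i$ for a unique integer $i\geq 1$. On $A=\{x\in H:|x|\leq 1\}$ define the equivalence $x\sim y$ by requiring every element of the hypersum $x-y$ to have absolute value at most $\rho_H$; axiom (v) of Definition \ref{defofvh} ensures this is equivalent to asking that some element of $x-y$ satisfy the bound. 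Since the hypersum of two elements of $A$ is a ball of radius at most $\rho_H$, it is contained in a single $\sim$-class, and the hyperoperation descends to a single-valued addition on $R_H=A/{\sim}$; multiplication descends trivially. A direct check shows $R_H$ is a local Artinian ring whose maximal ideal $\mathfrak{m}_{R_H}$, generated by the class of a uniformizer of $H$, satisfies $\mathfrak{m}_{R_H}^i=0$, so it is a truncated DVR. Set $M_H=\{x\in H:|x|\leq q\}/{\sim'}$, using the finer threshold $q\rho_H$; it is free of rank one over $R_H$ on the class of any uniformizer, and the inclusion of underlying subsets induces $\epsilon_H\colon M_H\to R_H$ with image $\mathfrak{m}_{R_H}$.

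To extend $\mathrm{Tr}$ to morphisms, let $f\colon H_1\to H_2$ be a homomorphism. By Definition \ref{vhhom}(iii) $f$ preserves absolute values, so writing $q_j$ for the valuation generator of $H_j$ we have $q_1=q_2^r$ for a unique integer $r\geq 1$; this will be the ramification index of $\mathrm{Tr}(f)$. Definition \ref{vhhom}(ii) guarantees that $f$ sends every element of the hypersum $x-y$ into $f(x)-f(y)$, and combined with valuation preservation this shows $f$ descends to a ring map $R_{H_1}\to R_{H_2}$ and to an $R_{H_1}$-linear map $M_{H_1}\to M_{H_2}^{\otimes r}$; Deligne's compatibility condition of Definition \ref{tripmorph} is then tautological. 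For faithfulness, the key observation is that $\sim$ restricted to the unit stratum $\{x\in H:|x|=1\}$, as well as $\sim'$ restricted to the valuation-one stratum, is equality—for distinct elements in one stratum the hypersum $x-y$ is a ball not containing $0$, which forces its single-valued absolute value to strictly exceed the respective threshold. A choice of uniformizer $\pi\in H$ therefore yields a splitting $H^\times\cong\pi^{\mathbb{Z}}\times U$, with $U=\{x\in H:|x|=1\}$ canonically identified with $R_H^\times$ and $\pi$ uniquely determined (among valuation-one elements) by its image in $M_H$, so that $f$ is recovered from the data $(r,f_R,\eta)$.

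For essential surjectivity, given a triple $(R,M,\epsilon)$ with $R$ of length $i$, form the rank-one free $R$-modules $M^{\otimes n}$ for every $n\in\mathbb{Z}$ (using $M^{\vee}=\mathrm{Hom}_R(M,R)$ for negative powers) together with the transition maps $\epsilon_{r,s}\colon M^{\otimes s}\to M^{\otimes r}$ of Definition \ref{tripmorph}. Set $H=\{0\}\sqcup\bigsqcup_{n\in\mathbb{Z}}G_n$, where $G_n\subseteq M^{\otimes n}$ is the set of generators; declare multiplication to be tensor product and $|x|=q^n$ on $G_n$ for a fixed $q\in(0,1)$. Define the hyperaddition of $x\in G_n$ and $y\in G_m$ (say $n\leq m$) by computing $z=x+\epsilon_{n,m}(y)\in M^{\otimes n}$ and declaring $x+y=\epsilon_{n,v(z)}^{-1}(z)\cap G_{v(z)}$ when $z\neq 0$, or $x+y=\{w\in H:|w|\leq q^{n+i}\}$ when $z=0$. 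Then one checks that $H$ is a discretely valued hyperfield with $\rho_H=q^i$ and that $\mathrm{Tr}(H)\cong(R,M,\epsilon)$. The main technical obstacle is precisely this verification—particularly the associativity axiom and axiom (d) of the hypergroup—which requires a careful case analysis depending on whether various intermediate sums vanish in the appropriate tensor powers; the object-level and morphism-level constructions, on the other hand, rest largely on the ball property of axiom (v) and go through routinely.
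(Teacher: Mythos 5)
Your construction of $\mathrm{Tr}$ at the object level and at the morphism level is essentially the one in the paper, and your faithfulness argument, though phrased via the splitting $H^\times\cong\pi^{\mathbb Z}\times U$ rather than via the natural isomorphism $\mathbf U\circ\mathrm{Tr}\cong\text{(forgetful)}$ that the paper proves, is a valid variant of the same idea (you should state the ``the $\sim$-class determines the element'' claim for every stratum $|x|=q^k$, not just $k=0,1$, since you need it to recover $f(\pi)\in\mathfrak m_{H'}^r$ from $\eta$). There are, however, two real issues.

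First, the descent of $f$ to $\phi\colon R_{H_1}\to R_{H_2}$ is not a consequence of Definition~\ref{vhhom}(ii)--(iii) alone. From $x\sim y$ (i.e.\ $d(x,y)\le\rho_{H_1}$) and $|f(z)|=|z|$ you obtain $d(f(x),f(y))\le\rho_{H_1}$, but the target equivalence requires $d(f(x),f(y))\le\rho_{H_2}$, and these are different bounds. The missing ingredient is Krasner's monotonicity lemma $\rho_{H_1}\le\rho_{H_2}$ (Lemma~\ref{increasingrho} in the paper, which the paper cites from Krasner and which uses axiom (iv) of Definition~\ref{vhhom}); without invoking it, the claim that $f$ descends is unjustified.

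Second, and more seriously, your proof of essential surjectivity takes a genuinely different and much harder route than the paper's and leaves the load-bearing step unproved. You build a candidate hyperfield $H$ directly from an arbitrary triple $(R,M,\epsilon)$ by patching together the generator strata $G_n\subseteq M^{\otimes n}$ and defining a multivalued addition through the transition maps $\epsilon_{n,k}$; you then explicitly defer the verification that $H$ satisfies the hypergroup axioms (associativity, existence and uniqueness of negatives, reversibility) as ``the main technical obstacle.'' That verification is not routine and is precisely the content that would need to be supplied. The paper avoids this entirely: it invokes Deligne's result that every truncated DVR $R$ is $\mathcal O/\mathfrak m^i$ for a genuine DVR $\mathcal O$, sets $K=\mathrm{Frac}(\mathcal O)$ and $H=K/(1+\mathfrak m^i)$ (for which the hyperfield axioms are already known from Krasner's quotient construction), computes $\mathrm{Tr}(H)\cong(\mathcal O/\mathfrak m^i,\mathfrak m/\mathfrak m^{i+1},\epsilon)$, and then uses Deligne's observation that an isomorphism of truncated DVRs extends to an isomorphism of triples. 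Your direct-reconstruction approach is closer in spirit to the paper's Section~7, where the analogous hyperfield structure on $\mathbf U(\mathrm{Tr}(H))$ is obtained by transport of structure from an already-existing $H$ rather than built from a bare triple; if you want to carry it out, you must actually check the axioms (d), (e), and associativity, with the case analysis on whether intermediate sums vanish, or else fall back on the Deligne realization argument.
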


Unfortunately, this functor is not an equivalence of categories because it is not full.  This problem is essentially due to the distinction between a discrete valuation and an absolute value.     To illustrate this distinction, suppose that $K$ is a local field and $\pi_K$ is a generator of its maximal ideal.  Then our convention is that $v(\pi_K)=1$, but on the other hand, we may choose the absolute value in such a way that $|\pi_K|$ will be any number strictly between 0 and 1.  This is an issue because triples only have the valuation $v$, while discretely valued hyperfields have an absolute value.  Hence an isomorphism class of discretely valued hyperfields contains some extra information represented by a number in the open interval $(0,1)$, which is not present in the associated triple.  However, this paper will prove the following theorem which says that any morphism of triples may be lifted to a morphism of discretely valued hyperfields after changing the definition of the absolute value on the target.  Morally this means that the aforementioned issue is the only obstacle preventing the functor from being full.

\begin{thm}Let $H_1,H_2$ be discretely valued hyperfields which are not fields.  Let $\mathrm{Tr}$ be the functor of Theorem \ref{mainthm1}.  Let $u:\mathrm{Tr}(H_1)\rightarrow \mathrm{Tr}(H_2)$ be a morphism of triples.  Then there is a valued hyperfield $H_2'$ with $H_2=H_2'$ as hyperfields, but with a different but equivalent absolute value, such that $\mathrm{Tr}(H_2)=\mathrm{Tr}(H_2')$ and such that $u:\mathrm{Tr}(H_1)\rightarrow \mathrm{Tr}(H_2')$ lifts to a morphism $H_1\rightarrow H_2'$.\end{thm}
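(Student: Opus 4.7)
The plan is to pinpoint the single piece of information that $\mathrm{Tr}$ discards about a discretely valued hyperfield $H$: the real number $|\pi|\in(0,1)$ attached to a uniformizer $\pi$. Since $H$ is discretely valued, every nonzero element satisfies $|x|=|\pi|^{v(x)}$ for some integer $v(x)$, and $\mathrm{Tr}(H)$ encodes the integer-valued function $v$ but not the base $|\pi|$. Replacing $|\cdot|$ by $|\cdot|^s$ for any $s>0$ preserves the underlying hyperfield and axioms (i)--(iv) of Definition~\ref{defofvh}, and axiom~(v) continues to hold with the norm $\rho_H$ replaced by $\rho_H^s$, since balls in the new metric coincide set-theoretically with balls in the old metric whose radii are raised to the $s$-th power. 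The integer discrete valuation is unchanged, so the triple is unchanged.

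\textbf{Choice of rescaling.} Write $(R_j,M_j,\epsilon_j)=\mathrm{Tr}(H_j)$ and $u=(r,f,\eta)$. Fix uniformizers $\pi_j\in H_j$, set $c_j=|\pi_j|_{H_j}\in(0,1)$, and put $s:=\log c_1/(r\log c_2)>0$. Let $H_2'$ denote $H_2$ equipped with the rescaled absolute value $|\cdot|_{H_2}^{\,s}$. By the previous paragraph, $H_2'$ is a discretely valued hyperfield with $\mathrm{Tr}(H_2')=\mathrm{Tr}(H_2)$, and $|\pi_2|_{H_2'}^{\,r}=c_2^{rs}=c_1$. This is precisely the numerical identity needed so that any multiplicative lift of $u$ sending $\pi_1$ into the valuation-$r$ stratum of $H_2$ will automatically satisfy $|\tilde f(x)|_{H_2'}=|x|_{H_1}$ (axiom~(iii) of Definition~\ref{vhhom}).

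\textbf{Construction of the lift.} The functor $\mathrm{Tr}$ of Theorem~\ref{mainthm1} identifies the valuation-zero stratum of $H_j$ with $R_j^\times$ and the valuation-$1$ stratum with $M_j\setminus\mathfrak{m}_{R_j}M_j$, and the group $H_j^\times$ is generated multiplicatively by $R_j^\times$ and $\pi_j^{\pm 1}$. Define $\tilde f:H_1\to H_2'$ on units via $f:R_1^\times\to R_2^\times$, on the valuation-$1$ stratum via the map $\eta:M_1\to M_2^{\otimes r}$ (whose image lands in $M_2^{\otimes r}\setminus\mathfrak{m}_{R_2}M_2^{\otimes r}\cong H_2^{v=r}$ thanks to the isomorphism condition $M_1\otimes_{R_1}R_2\cong M_2^{\otimes r}$), and extend multiplicatively to all of $H_1^\times$; send $0$ to $0$. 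The compatibility $f\epsilon_1=\epsilon_{0,r}'\eta$ of Definition~\ref{tripmorph} makes this consistent across strata. By construction $|\tilde f(x)|_{H_2'}=c_2^{srv(x)}=c_1^{v(x)}=|x|_{H_1}$, giving axiom~(iii); the fiber of $\tilde f$ over $1$ is $1+\ker f$, which is a ball because $\ker f$ is a power of $\mathfrak{m}_{R_1}$, giving axiom~(iv); axiom~(i) is immediate; and $\mathrm{Tr}(\tilde f)=u$ because $\tilde f$ was built directly from $(r,f,\eta)$.

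\textbf{Main obstacle.} The serious step is axiom~(ii) of Definition~\ref{vhhom}, the preimage-of-sums condition $\tilde f^{-1}(a+b)=\tilde f^{-1}(a)+\tilde f^{-1}(b)$. By Definition~\ref{defofvh}(v) both sides are balls in the valuation metric of $H_1$, and the rescaling has been chosen so that $\tilde f$ is an isometry; the comparison of radii is therefore automatic and the content of axiom~(ii) collapses to a comparison of centers. That comparison is controlled by the ring-theoretic compatibility $f\epsilon_1=\epsilon_{0,r}'\eta$ together with the isomorphism $M_1\otimes_{R_1}R_2\cong M_2^{\otimes r}$ of Definition~\ref{tripmorph}. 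This is the place where one must invoke the full strength of the encoding of hyperfield addition inside the triple that underlies the construction of $\mathrm{Tr}$ in Theorem~\ref{mainthm1}; in particular, the faithfulness of $\mathrm{Tr}$ guarantees that there is at most one morphism compatible with $u$, so establishing the existence of such a lift reduces to a direct check that the axioms carry over.
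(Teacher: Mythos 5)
Your overall strategy — rescale the absolute value on $H_2$ so that $\theta_{H_2'}=\theta_{H_1}^{1/r}$ (your $s=\log c_1/(r\log c_2)$ gives exactly this), observe that this does not change the triple, then try to lift $u$ using the set-level map underlying $\mathbf{U}$ and check the valued-hyperfield-morphism axioms — is precisely the route the paper takes (Remark~\ref{rescaleabs} reduces the claim to Corollary~\ref{fullcor}, which rests on Proposition~\ref{hfmorphcons}). Your bookkeeping on the rescaling, the multiplicativity, axiom~(iii), and axiom~(iv) (the fiber of $1$ is $1+\ker\phi$, and $\ker\phi$ is a power of $\mathfrak{m}_{R_1}$) is sound.

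However, there is a genuine gap at what you correctly flag as the main obstacle, axiom~(ii). Your handling of it does not constitute a proof. First, the assertion that ``both sides are balls ... the comparison of radii is therefore automatic and the content of axiom~(ii) collapses to a comparison of centers'' is unjustified: neither $\tilde f^{-1}(\tilde f(x)+\tilde f(y))$ nor $\tilde f^{-1}(\tilde f(x))+\tilde f^{-1}(\tilde f(y))$ is \emph{a priori} a single ball in $H_1$, and the radius of the fibers of $\tilde f$ (governed by $\ker\phi$) is not the same as the radius $\rho_{H_1}\max(|x|,|y|)$ of $x+y$, so the two sides are not obviously concentric balls of equal radius. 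Second, invoking the faithfulness of $\mathrm{Tr}$ (``there is at most one morphism compatible with $u$, so establishing the existence of such a lift reduces to a direct check'') is circular: faithfulness controls uniqueness, not existence, and ``reduces to a direct check'' is a restatement of what must be proved, not a proof. What actually closes this step in the paper is Lemma~\ref{nonobviouslem}, the commutation $\epsilon'_{rj,rk}\,\eta^{\otimes k}=\eta^{\otimes j}\,\epsilon_{j,k}$, fed into the explicit description of $+_{\widetilde H}$ in terms of the maps $\epsilon_{j,k}$: one fixes $z$ with $\tilde f(z)\in \tilde f(x)+\tilde f(y)$, translates this into an equation in $M'_{rj}$, pulls it back along $\eta^{\otimes j}$ via the lemma, produces $x'$ with $\tilde f(x')=\tilde f(x)$ and $z\in x'+y$, and conversely. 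You will need to carry out this computation (or an equivalent one) to make the argument complete.
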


The situation is particularly nice when one is interested only in the extensions of a particular discretely valued hyperfield or triple.  In this situation, the ambiguity about how to normalize the absolute value disappears and one has the following equivalence of categories.

\begin{thm}Let $H$ be a discretely valued hyperfield which is not a field.  Then $\mathrm{Tr}$ induces an equivalence of categories between the coslice category under $H$\footnote{Recall that the coslice category of a category $\mathcal{C}$ under an object $X\in\mathcal{C}$ is the category whose objects consist of objects $Y\in \mathcal{C}$ equipped with a morphism $X\rightarrow Y$ and whose morphisms $Y\rightarrow Z$ are morphisms in $\mathcal{C}$ such that the composite $X\rightarrow Y\rightarrow Z$ agrees with the morphism $X\rightarrow Z$ which is given as part of the structure of $Z$.  For example, the category of $R$-algebras is the coslice category of the category of rings under an object $R$.} and the coslice category under $\mathrm{Tr}(H)$.  It also induces an equivalence between the slice category over $H$ and the slice category over $\mathrm{Tr}(H)$.
\end{thm}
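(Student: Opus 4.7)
The plan is to deduce both equivalences from Theorem~\ref{mainthm1} together with the preceding lifting theorem, using the crucial observation that in a coslice or slice category based at $H$, the structure morphism rigidifies the normalization of the absolute value on all objects, thereby eliminating the ambiguity that prevented $\mathrm{Tr}$ from being full in general.

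The first step is a technical fact: in any discretely valued hyperfield $H'$ which is not a field, the multiplicative value group is a discrete subgroup of $\mathbb{R}^\times_{>0}$, hence cyclic, generated by $|\pi_{H'}|$ for any uniformizer $\pi_{H'}$; consequently the absolute value on $H'$ is determined by the single number $|\pi_{H'}|$. Moreover, for a morphism $f\colon A\to B$ of such hyperfields with ramification index $e$, Definition~\ref{vhhom}(iii) together with multiplicativity forces $|\pi_A|=|\pi_B|^e$. Hence, once $|\pi_H|$ is fixed, each object $H\to H'$ of the coslice (respectively $H'\to H$ of the slice) determines $|\pi_{H'}|$ as $|\pi_H|^{1/e}$ (resp.\ $|\pi_H|^e$), and the same relation constrains every morphism in the category.

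For essential surjectivity of the coslice functor, given $u\colon\mathrm{Tr}(H)\to T$, essential surjectivity of $\mathrm{Tr}$ supplies some $H'_0$ with $\mathrm{Tr}(H'_0)\cong T$, and then the preceding lifting theorem applied to $u$ produces a valued hyperfield $H''$ on the same underlying hyperfield as $H'_0$ with $\mathrm{Tr}(H'')=\mathrm{Tr}(H'_0)$ together with a lift $H\to H''$ whose image under $\mathrm{Tr}$ is $u$. For the slice case, given $u\colon T\to\mathrm{Tr}(H)$ of ramification index $e$, I first replace the absolute value on $H'_0$ by the equivalent one normalized so that $|\pi_{H'_0}|=|\pi_H|^e$ (this does not change $\mathrm{Tr}(H'_0)$); invoking the lifting theorem on this rescaled $H'_0$ and $u$, the target absolute value it produces must satisfy $|\pi|=|\pi_{H'_0}|^{1/e}=|\pi_H|$, so the modified target coincides with $H$ itself, yielding the desired lift $H'_0\to H$.

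For fullness, consider objects $(H',f)$ and $(H'',f')$ in the coslice (or slice) and a morphism of triples $u\colon\mathrm{Tr}(H')\to\mathrm{Tr}(H'')$ compatible with the structure maps. Applying the lifting theorem to $u$ yields a lift $g\colon H'\to H'''$, where $H'''$ shares its hyperfield with $H''$ but may carry a different equivalent absolute value. The compatibility forces the ramification indices to multiply correctly, and by the technical fact this in turn forces $|\pi_{H'''}|=|\pi_{H''}|$; hence $H'''=H''$ and $g$ is a morphism of valued hyperfields. The fact that $g$ lies in the coslice (resp.\ slice), i.e.\ that the triangle of valued hyperfields commutes, follows from faithfulness of $\mathrm{Tr}$ since its image under $\mathrm{Tr}$ is the given commuting triangle of triples. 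Faithfulness of the two induced functors is inherited directly from faithfulness of $\mathrm{Tr}$. The step requiring the most care is the slice essential surjectivity, where one must deliberately rescale the source $H'_0$ before invoking the lifting theorem so that the theorem's unavoidable adjustment of the target absolute value happens to be trivial.
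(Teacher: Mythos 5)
Your proposal is correct and follows essentially the same route as the paper: find a preimage hyperfield via essential surjectivity of $\mathrm{Tr}$, rescale its absolute value so that the ramification index matches $\log\theta_H/\log\theta_{H'}$, and then invoke the lifting result (Corollary~\ref{fullcor}/Remark~\ref{rescaleabs}) together with faithfulness of $\mathrm{Tr}$ to get essential surjectivity and fullness. You spell out a few details the paper leaves implicit (the slice case, the observation that the value group is cyclic so the absolute value is pinned down by $|\pi|$, and that commutativity of the lifted triangle follows from faithfulness), but the underlying argument is the same.
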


P. Deligne has defined notions of finiteness and flatness for morphisms of triples.  These notions were essential to his proof of an analogue of Theorem \ref{delhypthm} for triples, which involves interpreting the category of finite separable extensions $L$ of a local field $K$ which satisfy $\Gal(L/K)^i=1$ in terms of finite flat extensions of the associated triple.  The final section of this paper will show that these notions of finiteness and flatness have simple descriptions in terms of the associated hyperfields.

\section{Further Examples of Hyperfields}\label{examplessect}
Since this paper is likely to be the reader's first exposure to hyperfields, it is worth giving some more examples  so as to give the reader a feel for how they arise.  Most examples of hyperfields that arise in practice come from the following construction.

\begin{eg}Let $K$ be a field and $G\subseteq K^\times$ be a subgroup.  Then we define $K/G$ to be the set of orbits of $K$ under the action of $G$.  Since $K/G=\{0\}\cup K^\times/G$, it is a monoid under multiplication whose nonzero elements form a group.  Furthermore, one may define a multivalued addition operation by $xG+yG=\{zG\mid \exists g,h\in G \,\mathrm{such\, that}\, z\in xg+yh\}$.  It is a simple exercise to check that $K/G$ satisfies the hyperfield axioms.
\end{eg}

We have already seen this construction in the introduction to this paper, where it is applied to the case when $K$ is a local field and $G$ is a ball centered at $1$.  Our next example is the smallest possible hyperfield.  

\begin{eg}Let $\mathbb{K}=\{0,1\}$.  Define multiplication on $\mathbb{K}$ in the obvious way, and the multivalued addition operation by the equations $0+0=\{0\}$, $0+1=1+0=\{1\}$, and $1+1=\{0,1\}$.  Then $\mathbb{K}$ is a hyperfield called the \emph{Krasner hyperfield}.  If $K\neq\mathbb{F}_2$ is a field then $K/K^\times \cong \mathbb{K}$.  Consequently, $\mathbb{K}$ encodes the arithmetic of zero and non-zero numbers in the same way that $\mathbb{F}_2$ encodes the arithmetic of even and odd numbers.
\end{eg}

A result of Lyndon and Prenowitz allows one to interpret abstract projective spaces as vector spaces over $\mathbb{K}$ \cite{prenowitz}\cite{lyndon} (c.f. also \cite{adeleclasses}.)  It also turns out that the Zariski points of a scheme correspond bijectively to the $(\Spec\mathbb{K})$-valued points of the scheme\cite{monoidstohyper}.

\begin{eg}The \emph{hyperfield of signs} is defined as $\mathbb{S}=\{0,1,-1\}$.  The addition is defined by $1+1=\{1\}$, $-1+(-1)=\{-1\}$, $1+(-1)=\{0,1,-1\}$ and the equations $0+x=x+0=\{x\}$ for all $x\in\mathbb{S}$.  The multiplication is defined in the obvious way.  $\mathbb{S}$ is canonically isomorphic to the quotient $\mathbb{R}/\mathbb{R}_{>0}$ of the real numbers by the multiplicative action of the positive reals. Hence, one may interpret $\mathbb{S}$ as encoding the arithmetic of zero, positive and negative numbers.
\end{eg}

The hyperfield of signs has played a large role in the work of M. Marshall\cite{marshall}.  Motivated by the demands of real algebraic geometry, he had defined the abstract real spectrum of a ring $R$ as a topological space whose points correspond to pairs $(P,\leq)$ where 
$P\subseteq R$ is prime and $\leq$ is a relation on $R/P$ which makes it into a totally ordered ring.  He has shown that points of the abstract real spectrum of $R$ correspond to homomorphisms $R\rightarrow \mathbb{S}$, which we may think of more geometrically as the $(\Spec\mathbb{S})$-valued points of $\Spec R$.  A. Connes and C. Consani have computed the set of points of the abstract real spectrum of $ \mathbb{Z}[T]$, which turns out to be very similar to the set of real numbers\footnote{This stands in contrast to the fact that if $R$ is a ring then the $(\Spec R)$-valued points of $\Spec \mathbb{Z}[T]$ correspond to elements of $R$.}\cite{monoidstohyper}.

\begin{eg}Let $K$ be a field.  Then $K/(K^\times)^2$ is a hyperfield.  The addition may be described explicitly by stating that $[a]\in [c_1]+\ldots+[c_n]$ if and only if the element $a\in K$ may be represented by the diagonal quadratic form $c_1x_1^2+\ldots+c_nx_n^2$.\end{eg}

M. Marshall has shown, using the Milnor K-theory of hyperfields, that one has an isomorphism $W(K)\cong W(F)$ of Witt rings of quadratic forms for two fields $F$ and $K$, whose characteristic is not $2$, if and only if there is an isomorphism of hyperfields $K/(K^\times)^2\cong F/(F^\times)^2$\cite{marshallreview}.  Hence, the hyperfield $K/(K^\times)^2$ contains the same information as $W(K)$.

\begin{eg}Let $\mathbb{Y}=\mathbb{R}\cup \{-\infty\}$.  Define multiplication on $\mathbb{Y}$ as addition of real numbers.  Define a sum on $\mathbb{Y}$ by declaring the sum of $x,y\in \mathbb{Y}$ to be $\max(x,y)$ if $x\neq y$ and to be $\{t\in\mathbb{Y}\mid t\leq x\}$ if $x=y$.  Then one can check that $\mathbb{Y}$ is a hyperfield.\end{eg}

The hyperfield $\mathbb{Y}$ was introduced by O. Viro, who showed that a tropical subvariety of $\mathbb{R}^n$ may be interpreted as a zero set of a family of polynomials over $\mathbb{Y}$\cite{viro}.  He has also shown that multiplicative seminorms on a ring $R$ correspond bijectively to hyperring homomorphisms $R\rightarrow \mathbb{Y}$.

\begin{eg}Let $\mathcal{T}\mathbb{R}=\mathbb{R}$ as multiplicative monoids.  Define addition on $\mathcal{T}\mathbb{R}$ as follows.  If $|x|>|y|$ then $x+y=x$, while if $|y|>|x|$ then $x+y=y$.  If $x=y$ then $x+y=x$. If $x=-y$ then $x+y$ is the closed interval $[-x,x]$.  Then, $\mathcal{T}\mathbb{R}$ is a hyperfield.\end{eg}

There is a \emph{dequantization} process which allows one to construct the tropical semifield from the semifield of nonnegative real numbers.  O. Viro observed that by applying this same process to $\mathbb{R}$ instead of to $\mathbb{R}_{\geq 0}$, one obtains the hyperfield $\mathcal{T}\mathbb{R}$\cite{viro}.  A. Connes and C. Consani have reinterpreted this dequantization as the \emph{universal perfection} of the real numbers\cite{thickreals}.  This interpretation, together with an analogue of the Witt construction for $\mathcal{T}\mathbb{R}$, has allowed them to find archimedean analogues of several aspects of $p$-adic Hodge theory.

\section{Notation}

If $H$ is a discretely valued hyperfield, we let $\theta_H$ be the smallest element of $\{|x|\mid x\in H^\times\}$ which is less than $1$.  We define a map $v:H\rightarrow \mathbb{Z}\cup\{\infty\}$ by $v(x)=\frac{\log |x|}{\log \theta_H}$.  We say an element of $\pi\in H$ is a \emph{uniformizer} if $v(\pi)=1$. 

If $H$ is a valued hyperfield, we use $\rho_H$ to denote its norm (c.f. Definition \ref{defofvh}).  We will use $\mathcal{O}_H$ to denote the closed ball of radius $1$ centered at $0$ inside $H$, and $\mathfrak{m}_H^k$ to denote the closed ball of radius $\theta_H^k$ around $0$.

If $(R,M,\epsilon)$ is a triple we say $x\in M$ is a \emph{uniformizer} if it generates $M$.

If $R$ is a ring, we let $l(R)$ denote the length of $R$ when viewed as a module over itself.

\section{Construction of the triple \texorpdfstring{$\mathrm{Tr}(H)$}{Tr(H)}}\label{tripcons}
Let $H$ be a discretely valued hyperfield, which is not a field.  For $x,y\in H$ we write $x\equiv_{\eta} y$ when $d(x,y)\leq \eta$.  Define $M_i=\mathfrak{m}_H^i/\equiv_{\rho_H\theta_H^i}$.  Before studying the objects $M_i$ we make the following remark, which is a consequence of the discreteness of the absolute value.

\begin{rmk}\label{discvalrmk}
We remark that the distance between two distinct elements is always a power of $\theta_H$.  Let $B\subseteq H$ be an open or closed ball of radius $r>0$ centered at a point $x\in H$.  Let $i$ be such that $\theta_H^i\leq r<\theta_H^{i-1}$ if $B$ is a closed ball or $\theta_H^i<r\leq \theta_H^{i-1}$ if $B$ is an open ball.  Then $B$ is a closed ball of radius $\theta_H^i$, and this radius is minimal among all $u$ such that $B$ is a ball of radius $u$.  Now let $k$ be chosen either such that $\theta_H^k\leq \rho_H<\theta_H^{k-1}$ or such that $\theta_H^k< \rho_H\leq\theta_H^{k-1}$, depending on whether we are in the open or closed case of axiom (v) of Definition \ref{defofvh}.  Then the ball $x+y$ of radius $\rho_H\max(|x|,|y|)$ appearing in the definition is a closed ball of radius $\theta_H^k\max(|x|,|y|)$, and this radius is minimal.  By the minimality of $\rho_H$, this implies that $\rho_H=\theta_H^k$ is a power of $\theta_H$, and that the closed case in axiom (v) is always the relevant one\footnote{This is not true if we do not require the valuation to be discrete.}.\end{rmk}

We are now ready to study the quotients $M_i$.  It turns out that in this case the quotient construction collapses the multivalued addition on $\mathfrak{m}_H^i$ into a single valued operation.

\begin{lem}$M_i$ is an abelian group for all $i\in \mathbb{Z}$.  $M_0$ is a commutative ring, and each $M_i$ is a module over $M_0$.\end{lem}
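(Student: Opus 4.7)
The plan is to exploit Remark \ref{discvalrmk}, which guarantees that $\rho_H$ is a power of $\theta_H$ and that axiom (v) of Definition \ref{defofvh} always produces \emph{closed} balls. The crucial observation is that for any $x, y \in \mathfrak{m}_H^i$, the hypersum $x + y$ is a closed ball of radius $\rho_H \max(|x|, |y|) \leq \rho_H \theta_H^i$. Since $d$ is ultrametric by axiom (iii), such a ball has diameter at most $\rho_H \theta_H^i$, so $x + y$ is contained in a single equivalence class of $\equiv_{\rho_H \theta_H^i}$. This collapses the multivalued addition on $\mathfrak{m}_H^i$ to an honest single-valued operation on $M_i$, namely $[x] + [y] := [z]$ for any $z \in x + y$.

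To verify well-definedness, suppose $x \equiv x'$ and $y \equiv y'$ in $\mathfrak{m}_H^i$, and pick any $u \in x + y$ and $v \in x' + y'$. Choose $\delta \in x - x'$ (taking $\delta = 0$ when $x = x'$); axiom (iv) forces $|\delta| \leq \rho_H \theta_H^i$, and axiom (e) rewrites $\delta \in x - x'$ as $x \in \delta + x'$. Associativity of the hyperoperation then yields $x + y \subseteq \delta + (x' + y)$, so some $w \in x' + y$ satisfies $u \in \delta + w$; applying axiom (e) again gives $\delta \in u - w$ and hence $d(u,w) \leq |\delta| \leq \rho_H \theta_H^i$. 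The same argument with $y, y'$ in place of $x, x'$ produces $v' \in x' + y'$ with $d(w, v') \leq \rho_H \theta_H^i$, and the closed ball $x' + y'$ has diameter $\leq \rho_H \theta_H^i$ so $d(v', v) \leq \rho_H \theta_H^i$; the ultrametric inequality closes the chain to give $d(u, v) \leq \rho_H \theta_H^i$. The abelian group structure on $M_i$ then follows directly from the hyperfield axioms: $[0]$ is an identity since $0 + x = \{x\}$, $[-x]$ is an inverse of $[x]$ since $0 \in x + (-x)$, and associativity and commutativity transfer verbatim.

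For the ring structure on $M_0$ and $M_0$-module structure on $M_i$, the strategy is parallel, with distributivity replacing associativity. Given $x \equiv x'$ in $\mathcal{O}_H$ and $y \equiv y'$ in $\mathfrak{m}_H^i$, pick $\delta \in x - x'$; distributivity gives $\delta y \in xy - x'y$, so axioms (iv) and (e) yield $d(xy, x'y) \leq |\delta||y| \leq \rho_H \cdot \theta_H^i$. The symmetric estimate $d(x'y, x'y') \leq |x'||y - y'| \leq \rho_H \theta_H^i$ follows in the same way, and the ultrametric inequality gives $d(xy, x'y') \leq \rho_H \theta_H^i$. The ring distributivity on the quotient is automatic: for any $w \in y + z$, hyperfield distributivity places $xw$ in $xy + xz$, so both $[x]([y] + [z])$ and $[x][y] + [x][z]$ equal $[xw]$. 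The commutative, associative, unital axioms descend trivially.

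The main obstacle throughout is bookkeeping: each step requires converting hyperadditive inclusions into metric inequalities by combining axiom (e) with the single-valuedness of absolute values given by axiom (iv), and then stitching these estimates together with the ultrametric triangle inequality. Once this translation is executed carefully for addition, every other verification (the group, ring, and module axioms) reduces to a direct appeal to the corresponding hyperfield axiom on representatives.
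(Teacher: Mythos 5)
Your proposal is correct and follows essentially the same approach as the paper: one observes that for $x,y\in\mathfrak{m}_H^i$ the hypersum $x+y$ is a ball of radius $\leq\rho_H\theta_H^i$, hence collapses to a single class mod $\equiv_{\rho_H\theta_H^i}$, and then verifies well-definedness by bounding differences of representatives with the ultrametric inequality. The only cosmetic difference is that the paper packages the well-definedness estimate into the single inclusion $\hat{z}-\hat{z}'\in(\hat{x}-\hat{x}')+(\hat{y}-\hat{y}')$ followed by the ultrametric bound, whereas you unfold the same reasoning step-by-step via repeated use of axiom (e) and reversibility; both are valid, and your wording attributing the bound $|\delta|\leq\rho_H\theta_H^i$ to ``axiom (iv)'' is slightly imprecise (it comes from $x\equiv_{\rho_H\theta_H^i}x'$, with axiom (iv) merely ensuring $|x-x'|$ is a well-defined number), but this is a presentation quibble rather than a gap.
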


\begin{proof}Let $x,y\in M_i$.  Let $\hat{x},\hat{y}\in\mathfrak{m}_H^i$ be lifts.  Let $\hat{z}\in \hat{x}+\hat{y}$.  Then $x+y\in M_i$ is defined to be it's equivalence class.  To show this is well-defined, let $\hat{x}',\hat{y}'\in\mathfrak{m}_H^i$ be another choice of lifts.  Then $|\hat{x}-\hat{x}'|\leq \rho_H\theta_H^i$ unless $0\in\hat{x}-\hat{x}'$.  On the other hand, if  $0\in\hat{x}-\hat{x'}$, then $\hat{x}-\hat{x}'$ is a ball around 0 of radius $\rho_H\mathrm{max}(\hat{x},\hat{x}')\leq \rho_H\theta_H^i$.  Thus we have $|\hat{x}-\hat{x}'|\leq \rho_H\theta_H^i$ in both cases, and similarly, $|\hat{y}-\hat{y}'|\leq \rho_H\theta_H^i$.  Let $\hat{z}'\in\hat{x}'+\hat{y}'$.  Then $\hat{z}-\hat{z}'\in (\hat{x}-\hat{x}')+(\hat{y}-\hat{y}')$, so $|\hat{z}-\hat{z}'|\leq\mathrm{max}(|\hat{x}-\hat{x}'|,|\hat{y}-\hat{y}'|)\leq \rho_H\theta_H^i$.  Thus $\hat{z}$ and $\hat{z}'$ define the same element of $M_i$.  Each of the abelian group axioms follows easily by using the corresponding facts in $\mathfrak{m}_H^i$.

We now define a bilinear multiplication map $M_i \times M_j\rightarrow M_{i+j}$.  Let $x\in M_i$ and $y\in M_j$.  Let $\hat{x}\in\mathfrak{m}_H^i$ and $\hat{y}\in\mathfrak{m}_H^j$ be lifts.  We define $xy\in M_{i+j}$ to be the class of $\hat{x}\hat{y}$.  Let $\hat{x}'$ be a different lift of $x$.  Then $d(\hat{x}\hat{y},\hat{x}'\hat{y})=|(\hat{x}-\hat{x'})\hat{y}|=|\hat{x}-\hat{x'}| |\hat{y}|\leq \rho_H\theta_H^i \theta_H^j$\footnote{The equalities here actually hold only when $d(\hat{x}\hat{y},\hat{x}'\hat{y})\neq 0$, but either way we get the inequality $d(\hat{x}\hat{y},\hat{x}'\hat{y})\leq \rho_H\theta_H^i \theta_H^j$.} since $\hat{y}\leq \theta_H^j$.  Thus $xy$ is independent of $\hat{x}$ and similarly it is independent of $\hat{y}$.  Bilinearity follows from the distributive law in $H$.  It is easy to check, using the associativity of $H$, that the multiplication $M_0\times M_0\rightarrow M_0$ makes $M_0$ into a ring, and that $M_0\times M_i\rightarrow M_i$ makes $M_i$ into a module.

\end{proof}

Henceforth we will denote $M_0$ by $R$ and $M_1$ by $M$.

\begin{lem}\label{Rtdvr}$R$ is a truncated DVR.  Its length is $\frac{\log \rho_H}{\log\theta_H}$.\end{lem}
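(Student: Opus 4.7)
The plan is to fix a uniformizer $\pi \in H$ (which exists because $H$ is not a field, so $\theta_H < 1$ is attained) and set $k := \log \rho_H / \log \theta_H$, which is a positive integer by Remark \ref{discvalrmk}. I would then define $\mathfrak{m}_R \subseteq R$ to be the image of $\mathfrak{m}_H$ under the quotient map $\mathcal{O}_H \twoheadrightarrow R$, and aim to verify the following chain of claims: (a) $\mathfrak{m}_R$ is a proper ideal of $R$; (b) every element of $R \setminus \mathfrak{m}_R$ is a unit; (c) $\mathfrak{m}_R = [\pi] R$; (d) $\mathfrak{m}_R^k = 0$ but $\mathfrak{m}_R^{k-1} \neq 0$, and each $\mathfrak{m}_R^i / \mathfrak{m}_R^{i+1}$ is a simple $R$-module. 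Together these give that $R$ is a truncated DVR of length $k$.

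For (a), additive closure follows from axiom (iii) applied to lifts in $\mathfrak{m}_H$, and closure under multiplication by $R$ follows from $\mathfrak{m}_H \cdot \mathcal{O}_H \subseteq \mathfrak{m}_H$ in $H$; properness reduces to the claim that an element of norm $1$ is never equivalent modulo $\equiv_{\rho_H}$ to an element of norm $\leq \theta_H$, which one obtains from the standard non-archimedean identity $|x - y| = \max(|x|,|y|) = 1 > \rho_H$. For (b), if $|x| = 1$ then $|x^{-1}| = 1$, so $x^{-1} \in \mathcal{O}_H$ and $[x][x^{-1}] = [1]$ directly from the definition of multiplication on $R$. For (c), each $x \in \mathfrak{m}_H$ factors as $x = \pi \cdot (x\pi^{-1})$ with $x\pi^{-1} \in \mathcal{O}_H$, so $[x] = [\pi][x\pi^{-1}] \in [\pi] R$. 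For (d), one checks inductively that $\mathfrak{m}_R^n$ is the image of $\mathfrak{m}_H^n$ in $R$; every element of $\mathfrak{m}_H^k$ has norm $\leq \theta_H^k = \rho_H$, so is $\equiv_{\rho_H} 0$, giving $\mathfrak{m}_R^k = 0$, while $\pi^{k-1}$ has norm $\theta_H^{k-1} > \rho_H$, so $[\pi^{k-1}] \neq 0$. A Nakayama-style argument (using that $1 - [\pi] r$ is a unit for any $r \in R$) then shows the filtration $R \supsetneq \mathfrak{m}_R \supsetneq \cdots \supsetneq \mathfrak{m}_R^k = 0$ is strictly decreasing, and since each graded piece is a cyclic $R/\mathfrak{m}_R$-module generated by $[\pi]^i$, it is simple. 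This gives $l(R) = k = \log\rho_H / \log\theta_H$ and the Artinian condition simultaneously.

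The main technical point requiring care is (a): one must work around the fact that $|x - y|$ is only a single real number when $0 \notin x - y$ (i.e., when $x \neq y$), so that the ultrametric manipulations have to be phrased using axiom (iv) and the multivalued analogues of the usual inequalities. Once the image of $\mathfrak{m}_H$ is cleanly shown to be a proper ideal, everything else is an essentially formal consequence of the ring and module structures on the $M_i$ already established in the previous lemma, together with the fact that $\rho_H$ is an integer power of $\theta_H$.
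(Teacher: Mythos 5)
Your proof is correct. Where it differs from the paper is in how the truncated--DVR structure is extracted. The paper first pushes the valuation of $H$ down to a function $v$ on $R$, checks $v(xy)=v(x)+v(y)$ and $v(x+y)\geq\min(v(x),v(y))$, and proves the divisibility statement ``$v(y)\geq v(x)\Rightarrow y\in xR$''; from that it gets the stronger structural fact that \emph{every} ideal of $R$ equals $\pi^i R$ for some $i$, from which locality, principality of $\mathfrak{m}_R$, and (via the minimality of the exponent killing $\pi$) the Artinian property and the length all drop out. Because of this route the paper has to treat the degenerate case $[\pi]=0$ separately (when $\rho_H=\theta_H$ the class of the uniformizer already dies and $R$ is a field). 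Your argument instead exhibits $\mathfrak{m}_R$ directly as the image of $\mathfrak{m}_H$, shows it is the unique maximal ideal and equals $[\pi]R$, and reads off the length from the $\mathfrak{m}_R$-adic filtration together with the identification of $\mathfrak{m}_R^n$ with the image of $\mathfrak{m}_H^n$; this establishes a bit less about arbitrary ideals than the paper does, but it handles the $[\pi]=0$ case uniformly (the filtration just collapses to $R\supsetneq 0$) and gets the length by the same underlying observation, namely that $\theta_H^i\leq\rho_H$ precisely when $i\geq \log\rho_H/\log\theta_H$. Both arguments use $\rho_H<1$ and the fact from Remark~\ref{discvalrmk} that $\rho_H$ is an integer power of $\theta_H$.
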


\begin{proof}
For $x\in R$, let $\hat{x}\in\mathcal{O}_H$ be a lift.  Define $v(x)=v(\hat{x})$ if $x\neq 0$ and $v(0)=\infty$. To see this is well-defined, suppose $x\neq 0$, and let $\hat{x}'$ be another lift. Then $|\hat{x}'-\hat{x}|\leq \rho_H$, but $|\hat{x}|>\rho_H$.  By the ultrametric inequality, $|\hat{x}|=|\hat{x'}|$, so $v(x)$ is well-defined.  

For $x,y\in R$ such that $xy\neq 0$, $v(xy)=v(x)+v(y)$, as may be seen by picking lifts of $x$ and $y$.  In addition, $v(x+y)\geq \mathrm{min}(v(x),v(y))$.  Suppose that $x,y\in R$ are such that $v(x)\leq v(y)$.  Suppose $y\neq 0$, so that we also have $x\neq 0$.  Pick lifts $\hat{x},\hat{y}\in\mathcal{O}_H$.  Then $v(\hat{x})\leq v(\hat{y})$, so there is a $\hat{z}\in\mathcal{O}_H$ such that $\hat{y}=\hat{x}\hat{z}$.  Let $z\in R$ be the class of $\hat{z}$.  Then $y=xz$.  Of course if $y=0$ then we get a similar inequality by taking $z=0$.  We have shown that if $v(y)\geq v(x)$, then $y\in xR$.

Suppose $\pi_H\in H$ is such that $v(\pi_H)=1$.  Let $\pi\in R$ be its class.  Suppose for the moment that $\pi=0$.  Let $x\in R$ be nonzero, and let $\hat{x}\in \mathcal{O}_H$ be a lift.  Then there does not exist $y$ such that $x=\pi y$, so there does not exist $\hat{y}\in\mathcal{O}_H$ such that $\hat{x}=\pi_h\hat{y}$.  Hence $v(\hat{x})=0$ so that $v(x)=0$.  Hence $v(x)=0\leq 0=v(1)$ so that $x$ divides $1$ and $R$ is a field, and hence is a truncated DVR.  So in the case where $\pi=0$, we are done, and so we may suppose $\pi\neq 0$.

We now have $v(\pi)=1$.  Let $I$ be an ideal generated by a set $S$.  Let $i=\inf_{x\in S} v(x)$.  Then $S\subseteq \pi^i R$.  $\pi^i \in I$ because $S\subseteq I$ contains an element of valuation $i$.  Hence every ideal has the form $I=\pi^i R$, so $R$ is local and has a principal maximal ideal.  Since $\pi^\frac{\log \rho_H}{\log\theta_H}$ is the smallest power of $\pi$ which is $0$, $R$ is Artinian, and the assertion about the length holds.\footnote{This step is where we use the assumption that $H$ is not a field.  If $H$ were a field, then $\rho=0$, so the length would be infinite.  We would then have a DVR rather than a truncated DVR.  In fact, in this case, the construction described just gives the ring of integers.}
\end{proof}

We will denote the maximal ideal of $R$ by $\mathfrak{m}_R$.

\begin{lem}\label{Mitens} $M$ is free of rank 1.  Furthermore, there is a canonical isomorphism $M_i\cong M^{\otimes i}$ for $i\in \mathbb{N}$.\end{lem}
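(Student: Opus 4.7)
The plan is to trivialize both $M$ and each $M_i$ as free $R$-modules of rank one using a uniformizer of $H$, then identify $M_i$ with $M^{\otimes i}$ via the multiplication maps constructed in the previous lemma. Since $H$ is not a field we have $\theta_H<1$, so by the discreteness of the valuation there exists $\pi_H\in H$ with $v(\pi_H)=1$; write $\pi\in M$ for its class.

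To show $M$ is free of rank one, I would verify that the $R$-linear map $\phi:R\to M$, $r\mapsto r\pi$, is an isomorphism. For surjectivity, the divisibility argument from Lemma \ref{Rtdvr} shows that any lift $\hat{y}\in\mathfrak{m}_H$ of $y\in M$ can be written $\hat{y}=\pi_H\hat{z}$ with $\hat{z}\in\mathcal{O}_H$, giving $y=\phi([\hat{z}])$. For injectivity, $\phi(r)=0$ in $M$ says $|\hat{r}|\theta_H\leq \rho_H\theta_H$ for a lift $\hat{r}$, i.e.\ $|\hat{r}|\leq \rho_H$, which is exactly the condition that $r=0$ in $R$.

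For the isomorphism $M_i\cong M^{\otimes i}$, the key point is that the multiplication maps $M_a\times M_b\to M_{a+b}$ from the previous lemma are $R$-bilinear, so iterating yields a multilinear map $M\times\cdots\times M\to M_i$ which factors through a canonical $R$-linear map $\psi_i:M^{\otimes i}\to M_i$. Independence of $\pi_H$ (canonicity) is automatic from this description, since no choice of uniformizer entered into it. Because $M^{\otimes i}$ is free of rank one on $\pi^{\otimes i}$ and $\psi_i(\pi^{\otimes i})$ is the class of $\pi_H^i$, one can then show $\psi_i$ is an isomorphism by repeating the two-step argument above, replacing $\mathfrak{m}_H$ by $\mathfrak{m}_H^i$, $\rho_H\theta_H$ by $\rho_H\theta_H^i$, and $\pi_H$ by $\pi_H^i$; surjectivity again uses the divisibility statement from Lemma \ref{Rtdvr} to write any lift $\hat{y}\in\mathfrak{m}_H^i$ as $\pi_H^i\hat{z}$, and injectivity reduces, as before, to comparing $|\hat{r}|\theta_H^i$ with $\rho_H\theta_H^i$.

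I expect no serious obstacle: the divisibility argument in Lemma \ref{Rtdvr} already does the nontrivial work, and the rest is bookkeeping about equivalence classes. The only subtle point to watch is that the iterated multiplication genuinely lands in $M_i$ rather than in some coarser quotient, which comes down to the observation, built into the construction in the previous lemma, that a product of elements of absolute value at most $\theta_H^{i_1},\ldots,\theta_H^{i_k}$ has absolute value at most $\theta_H^{i_1+\cdots+i_k}$, so error terms of size $\rho_H\theta_H^{i_j}$ in one factor produce errors of size $\rho_H\theta_H^{i_1+\cdots+i_k}$ after multiplication.
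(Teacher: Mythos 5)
Your proof is correct, and the core freeness argument (multiplication by a uniformizer is a bijection $\mathcal{O}_H\to\mathfrak{m}_H$ that descends to the quotients) is the same as the paper's. Where you differ is in how you establish the isomorphism $M_i\cong M^{\otimes i}$ and, especially, its canonicity. The paper first shows $M_i$ is free on $\overline{\pi^i}$ by the same argument, then \emph{defines} an isomorphism by sending the generator $\overline{\pi^i}$ to $\bar{\pi}^{\otimes i}$, and asserts that this is independent of the choice of $\pi$ (which amounts to checking that replacing $\pi$ by $u\pi$ scales both sides by $u^i$). You instead build the map $\psi_i:M^{\otimes i}\to M_i$ intrinsically, by observing that the multiplication maps $M_a\times M_b\to M_{a+b}$ from the previous lemma are $R$-bilinear and iterating them, then factoring the resulting multilinear map through the tensor product. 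That makes canonicity completely automatic --- no uniformizer enters the definition of $\psi_i$ --- and you only invoke a uniformizer afterward, to check that $\psi_i$ sends a generator to a generator and is therefore an isomorphism. This is a small reorganization, but it genuinely replaces the paper's ``it is easy to check'' with a construction in which there is nothing to check. One minor comment: the worry you raise at the end about the iterated multiplication possibly landing in ``some coarser quotient'' is not a real one --- the bilinear maps $M_a\times M_b\to M_{a+b}$ already land where they should by the construction of the previous lemma, and iteration preserves this; you correctly note this, but it reads as more of a concern than it needs to be.
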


\begin{proof} Let $\pi\in H$ be a uniformizer.  Multiplication by $\pi$ gives a bijection $\mathcal{O}_H\rightarrow \mathfrak{m}_H$.  It is easily seen that this induces a well-defined bijection $\mathcal{O}_H/\equiv_{\rho_H}\rightarrow \mathfrak{m}_H/\equiv_{\theta_H\rho_H}$.  Since this bijection is just multiplication by  $\bar{\pi}\in M$, it is a homomorphism of modules, and so $M$ is free of rank 1.  A similar argument shows $M_i$ is free and generated by $\overline{\pi^i}$.  We define an isomorphism $M_i\cong M^{\otimes i}$ sending $\overline{\pi^i}$ to $\bar{\pi}^{\otimes i}$.  It is easy to check that this isomorphism is canonical in the sense that it is independent of the choice of $\pi$.\end{proof}

\begin{rmk}\label{negtenspow}Since $M$ is free of rank $1$, we can define $M^{\otimes k}$ for $k<0$ as well.  In this case one defines $M^{\otimes k}$ as the dual $\mathrm{Hom}(M^{\otimes -k},R)$ of $M^{\otimes -k}$.  If $\pi\in M$ is a generator, we obtain a generator $\pi^{\otimes k}\in M^{\otimes k}$ as the unique linear map sending $\pi^{\otimes -k}$ to $1$.  If $\pi'=u\pi\in M$ is another generator, we get $\pi'^{\otimes k}=u^k\pi^{\otimes k}$.  If $j$ and $k$ are arbitrary, it is a straightforward exercise to see that the map $M^{\otimes j}\otimes M^{\otimes k}\rightarrow M^{\otimes( j+k)}$ sending $\pi^{\otimes j}\otimes \pi^{\otimes k}$ to $\pi^{\otimes(j+k)}$ is a well defined isomorphism.  Furthermore, one may easily define an isomorphism $M^{\otimes (jk)}\cong (M^{\otimes j})^{\otimes k}$.  All of this is a standard part of the theory of line bundles.  The proof of Lemma \ref{Mitens} carries over easily to the case of negative tensor powers.  Another useful property of tensor powers is that given an isomorphism $\psi:X\rightarrow Y$ of free modules of rank $1$, which sends a generator $x\in X$ to a generator $y\in Y$, we can obtain a well-defined isomorphism $\psi^{\otimes k}:X^{\otimes k}\rightarrow Y^{\otimes k}$ sending $x^{\otimes k}$ to $y^{\otimes k}$.  However, if $\psi$ is not an isomorphism then a construction of this sort may only be done for nonnegative tensor powers.\end{rmk}

We now construct a map $\epsilon:M\rightarrow R$.  Let $x\in M$.  Let $\hat{x}\in \mathfrak{m}_H\subseteq \mathcal{O}_H$ be a lift.  Then $\epsilon(x)$ is defined to be the class of $\hat{x}$ in $R$.

\begin{lem} $\epsilon$ is a well defined $R$-linear map.  Furthermore, its image is $\mathfrak{m}_R$.
\end{lem}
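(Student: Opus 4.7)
The plan is to check each claim about $\epsilon$ by unwinding the definitions and using the fact that the two equivalence relations involved, namely $\equiv_{\rho_H}$ on $\mathcal{O}_H$ (defining $R$) and $\equiv_{\rho_H\theta_H}$ on $\mathfrak{m}_H$ (defining $M$), are compatible: since $\theta_H<1$, the $M$-equivalence is finer than the $R$-equivalence. The key computation for well-definedness is precisely this inequality $\rho_H\theta_H\leq \rho_H$. I anticipate no real obstacle, since everything reduces to the same sort of lift-and-compare arguments used in the proofs of the preceding two lemmas.

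For well-definedness, I would take two lifts $\hat{x},\hat{x}'\in\mathfrak{m}_H$ of a given $x\in M$. By definition of $M$ we have $d(\hat{x},\hat{x}')\leq \rho_H\theta_H\leq \rho_H$, so $\hat{x}$ and $\hat{x}'$ represent the same class in $R$, and $\epsilon(x)$ is unambiguous. For additivity, I would pick lifts $\hat{x},\hat{y}\in\mathfrak{m}_H$ of $x,y\in M$ and any $\hat{z}\in \hat{x}+\hat{y}$. By axiom (iii) of Definition \ref{defofvh}, $|\hat{z}|\leq \max(|\hat{x}|,|\hat{y}|)\leq\theta_H$, so $\hat{z}\in\mathfrak{m}_H$ is a legitimate lift of $x+y\in M$, and at the same time its class in $R$ computes $\epsilon(x)+\epsilon(y)$ by the construction of addition on $R$ in the first lemma of this section. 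Hence $\epsilon(x+y)=[\hat{z}]_R=\epsilon(x)+\epsilon(y)$. For $R$-linearity, I would take $r\in R$ with a lift $\hat{r}\in\mathcal{O}_H$ and $x\in M$ with a lift $\hat{x}\in\mathfrak{m}_H$; then $\hat{r}\hat{x}\in\mathfrak{m}_H$ is a lift of $rx$, so $\epsilon(rx)=[\hat{r}\hat{x}]_R=r\epsilon(x)$.

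For the claim about the image, I would argue that $\epsilon(M)$ is an $R$-submodule of $R$, hence an ideal, so it is either $R$, $\mathfrak{m}_R$, or a smaller power of $\mathfrak{m}_R$ (by Lemma \ref{Rtdvr}). Picking a uniformizer $\pi_H\in H$ (which exists since $H$ is not a field), its class $\pi$ in $M$ is a generator (by Lemma \ref{Mitens}), and $\epsilon(\pi)=[\pi_H]_R$ has $v_R([\pi_H]_R)=v(\pi_H)=1$ by the valuation formula in the proof of Lemma \ref{Rtdvr}; thus $\epsilon(\pi)$ generates $\mathfrak{m}_R$, giving $\mathfrak{m}_R\subseteq\epsilon(M)$. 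The reverse inclusion is immediate: any lift $\hat{x}\in\mathfrak{m}_H$ of an element of $M$ satisfies $|\hat{x}|\leq\theta_H$, so $[\hat{x}]_R$ lies in $\mathfrak{m}_R$. Combining, $\epsilon(M)=\mathfrak{m}_R$.
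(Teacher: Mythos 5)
Your proposal is correct and follows essentially the same route as the paper: well-definedness via $\rho_H\theta_H\leq\rho_H$, linearity by lifting, and computing the image by tracking the generator $\pi_H$ of $M$. The paper's image argument is a touch more direct (since $\epsilon$ is $R$-linear and $M$ is free of rank one on the class of $\pi_H$, the image is simply the ideal generated by the class of $\pi_H$ in $R$, i.e.\ $\mathfrak{m}_R$), so your detour through ``$\epsilon(M)$ is an ideal, hence a power of $\mathfrak{m}_R$'' plus a separate reverse-inclusion check is a bit longer than needed, but the underlying idea is the same.
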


\begin{proof}
Let $\hat{x},\hat{x}' \in\mathfrak{m}_H$ be lifts of $x\in R$.  Then $\hat{x}\equiv_{\theta_H\rho_H} \hat{x}'$, so $\hat{x}\equiv_{\rho_H} \hat{x}'$.  Thus they give the same element of $R$, and so $\epsilon$ is well-defined.  The $R$-linearity is trivial.   Because the map is $R$-linear and because $M$ is free of rank $1$, we may describe its image by computing what it does to a generator of $M$.  If we let $\pi_H$ be an element of $H$ with $v(\pi_H)=1$, then $M$ is generated by the class of $\pi_H\in\mathfrak{m}_H$ while $\mathfrak{m}_R$ is the principal ideal generated by the class of $\pi_H\in\mathcal{O}_H$, so we see that the image is as described.
\end{proof}

\begin{defn} $\mathrm{Tr}(H)=(R,M,\epsilon)$.\end{defn}

We have proven the following theorem.

\begin{thm}$\mathrm{Tr}(H)$ is a triple in the sense of Deligne.\end{thm}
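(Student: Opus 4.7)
The plan is to unpack Deligne's definition of a triple and observe that each of its clauses has already been verified in the lemmas that appear immediately before the theorem. Recall that a triple $(R,M,\epsilon)$ consists of (i) a truncated DVR $R$, (ii) a free $R$-module $M$ of rank $1$, and (iii) an $R$-linear map $\epsilon \colon M \to R$ whose image equals $\mathfrak{m}_R$. So my proof would proceed by citing, in turn, the corresponding result already established for each clause.

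For clause (i), I would invoke Lemma \ref{Rtdvr}, which shows that $R = M_0$ is local Artinian with principal maximal ideal and explicit length $\log \rho_H / \log \theta_H$. The hypothesis that $H$ is not a field is used precisely here, to guarantee $\rho_H > 0$ and hence finite length, rather than $R$ being a genuine DVR. For clause (ii), I would cite Lemma \ref{Mitens}, where multiplication by a uniformizer $\pi_H \in H$ produces, via the bijection $\mathcal{O}_H \to \mathfrak{m}_H$ compatible with the relevant equivalence relations, a free $R$-module generator of $M = M_1$. For clause (iii), the lemma immediately preceding the theorem verifies that $\epsilon \colon M \to R$ is well-defined and $R$-linear, and computes that its image is generated by the class of $\pi_H$ in $R$, which is a generator of $\mathfrak{m}_R$.

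There is no genuine obstacle at this stage: the substantive work was carried out in the earlier lemmas, which in turn relied on Remark \ref{discvalrmk} to ensure that the quotient construction collapses the multivalued addition on $\mathfrak{m}_H^i$ into a \emph{single-valued} operation on $M_i$, and on the hypothesis that $H$ is not a field to ensure that $R$ is Artinian. The theorem itself is therefore a packaging statement, and its proof reduces to a one-sentence assembly of the three lemmas above; no new calculation is required.
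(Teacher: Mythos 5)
Your proposal matches the paper exactly: the theorem is stated immediately after the definition of $\mathrm{Tr}(H)$ with the remark ``We have proven the following theorem,'' so the intended proof is precisely the assembly of Lemma \ref{Rtdvr}, Lemma \ref{Mitens}, and the lemma on $\epsilon$ that you cite. No further comment is needed.
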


As an example, we will explicitly compute what the constructions of this section give in the case where $H=K/(1+\mathfrak{m}_K^i)$ is the hyperfield that was considered in the introduction.

\begin{eg}Let $K$ be a local field and $H=K/(1+\mathfrak{m}_K^i)$.  One has $\rho_H=\theta_H^i$.  For $k\in \mathbb{Z}$, consider the composite of the quotient maps $\alpha:\mathfrak{m}_K^k\rightarrow \mathfrak{m}_H^k$ and $\beta: \mathfrak{m}_H^k\rightarrow M_k$.  One has $\alpha(x+y)\in \alpha(x)+\alpha(y)$ for all $x,y\in   \mathfrak{m}_K^k$ and $\beta(x+y)=\beta(x)+\beta(y)$ for all $x,y\in \mathfrak{m}_H^k$.  Hence $\beta(\alpha(x+y))=\beta(\alpha(x))+\beta(\alpha(y))$ for all $x,y\in\mathfrak{m}_K^k$, and in fact $\beta\alpha$ is a homomorphism of $\mathcal{O}_K$-modules.  By definition of $\beta$, one sees $\ker (\beta\alpha)$ contains all elements with absolute value at most $\rho_H\theta_H^k=\theta_H^{k+i}$.  Using the fact that $\alpha$ preserves absolute value, one can see that $\ker (\beta\alpha)$ does not contain an element of absolute value $\theta_H^{k+i+1}$.  Hence $\ker(\beta\alpha)=\mathfrak{m}_K^{k+i}$ so $M_k\cong \mathfrak{m}_K^{k}/\mathfrak{m}_K^{k+i}$.  In particular $\mathrm{Tr}(K/(1+\mathfrak{m}_K^i))=(\mathcal{O}_K/\mathfrak{m}_K^i,\mathfrak{m}_K/\mathfrak{m}_K^{1+i},\epsilon)$, for some map $\epsilon$.  One may easily compute that $\epsilon:\mathfrak{m}_K/\mathfrak{m}_K^{1+i}\rightarrow\mathcal{O}_K/\mathfrak{m}_K^i$ is the map induced by the inclusion $\mathfrak{m}_K\subseteq \mathcal{O}_K$.\end{eg}

\section{Functoriality}\label{func}
Let $H,H'$ be discretely valued hyperfields, which are not fields.  We will retain all the notation of the previous section.  In addition we will define $\epsilon'$, $R'$, $M'$, and $M'_i$ in a manner analogous to that of the previous section, but using $H'$ instead of $H$.\footnote{So for example, $\mathrm{Tr}(H')=(R',M',\epsilon')$.}  Throughout this section, we let $f:H\rightarrow H'$ be a morphism of valued hyperfields.  We will let $r=\frac{\log \theta_H}{\log \theta_{H'}}$.

The following lemma is due to M. Krasner.

\begin{lem}\label{increasingrho}\cite[pg149]{krasner}$\rho_{H'}\geq\rho_H$.\end{lem}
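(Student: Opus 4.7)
The strategy is a short direct argument built on three ingredients: $f$ preserves absolute values exactly, $f$ respects additive inverses, and $f$ sends hypersums into hypersums. From these I can force $\rho_{H'} \geq \rho_H$ by exhibiting an element of $H$ that realizes the extremal radius of the ball $x+(-x)$ and tracking what happens to it under $f$.

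First I would record the easy consequences of Definition \ref{vhhom} that I need. Applying axiom (ii) with $a = f(x)$, $b = f(y)$ gives $x+y \subseteq f^{-1}(f(x)+f(y))$, so $f(x+y) \subseteq f(x)+f(y)$ for all $x,y \in H$. Axiom (iii) together with axiom (i) of Definition \ref{defofvh} gives $|f(0)| = 0$ and hence $f(0) = 0$. Combining these with $0 \in x+(-x)$, one obtains $0 \in f(x)+f(-x)$, so $f(-x) = -f(x)$ by uniqueness of negatives (axiom (d) of a canonical abelian hypergroup).

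Next, fix any nonzero $x \in H$. By axiom (v) of Definition \ref{defofvh}, $x+(-x)$ is a closed ball of radius $\rho_H |x|$, and since it contains $0$ (and every point of an ultrametric ball is a center) it equals
\[ x + (-x) = \{ z \in H : |z| \leq \rho_H |x| \}. \]
By Remark \ref{discvalrmk}, $\rho_H$ is a power of $\theta_H$, say $\rho_H = \theta_H^k$. Choosing a uniformizer $\pi \in H$, the element $z = \pi^k x$ satisfies $|z| = \theta_H^k |x| = \rho_H |x|$, so $z \in x+(-x)$. Applying $f$ gives $f(z) \in f(x)+f(-x) = f(x)-f(x)$, a closed ball around $0$ of radius $\rho_{H'} |f(x)| = \rho_{H'} |x|$. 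Thus $|f(z)| \leq \rho_{H'}|x|$, while axiom (iii) of Definition \ref{vhhom} gives $|f(z)| = |z| = \rho_H |x|$. The inequality $\rho_H \leq \rho_{H'}$ follows.

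There is essentially no obstacle here; the only subtle point is producing an element of $H$ whose absolute value equals the extremal value $\rho_H |x|$, and this is exactly where discreteness (via Remark \ref{discvalrmk}) is used. Without discreteness one would have to approach this extremum by a limiting argument, but in the present setting the construction $z = \pi^k x$ is immediate.
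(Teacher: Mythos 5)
Your proof is correct. Note that the paper does not actually supply a proof of this lemma --- it simply cites Krasner [pg 149] --- so there is no in-paper argument to compare against. Your derivation is a clean self-contained justification in the discrete setting: the preliminary facts ($f$ sends hypersums into hypersums via axiom (ii) of Definition \ref{vhhom}, $f(0)=0$ from axiom (iii), and $f(-x)=-f(x)$ by uniqueness of negatives) are all derived correctly, and the key step of exhibiting $z=\pi^k x$ with $|z|=\rho_H|x|$ inside $x+(-x)$ is legitimate precisely because Remark \ref{discvalrmk} guarantees the relevant balls are closed and $\rho_H$ is a power of $\theta_H$. Since Section 5 works throughout under the hypothesis that $H$ and $H'$ are discretely valued, invoking discreteness here is appropriate; you correctly flag that Krasner's original statement (which is not restricted to the discrete case) would require a limiting argument in place of the explicit $z$.
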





We also need the following lemma.

\begin{lem}Let $f:H\rightarrow H'$ be a morphism of valued hyperfields, and let $x,y\in H$.  Then $d(f(x),f(y))\leq d(x,y)$, with equality if $f(x)\neq f(y)$.\end{lem}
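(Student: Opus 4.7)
The plan is to establish the set-theoretic inclusion $f(x-y)\subseteq f(x)-f(y)$ of subsets of $H'$, and then extract the distance inequality by reading off absolute values.

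First I record two preliminary identities. From axioms (i) and (iii) of Definition \ref{vhhom} we have $|f(0)|=|0|=0$, so $f(0)=0$. For compatibility with inverses, note that the preimage axiom (ii) of Definition \ref{vhhom} implies the weaker inclusion $f(u+v)\subseteq f(u)+f(v)$ for all $u,v\in H$: indeed, if $w\in u+v$, then $w\in f^{-1}(f(u))+f^{-1}(f(v))=f^{-1}(f(u)+f(v))$, so $f(w)\in f(u)+f(v)$. Applying this to the relation $0\in y+(-y)$ yields $0\in f(y)+f(-y)$, and the uniqueness clause in axiom (d) of the canonical hypergroup axioms forces $f(-y)=-f(y)$. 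Combining these gives the desired inclusion $f(x-y)\subseteq f(x)-f(y)$.

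From here the lemma splits into two cases. If $f(x)=f(y)$, then $d(f(x),f(y))=0\leq d(x,y)$ and we are done. Otherwise, axiom (e) of the hypergroup axioms implies $0\notin f(x)-f(y)$, and then axiom (iv) of Definition \ref{defofvh} tells us that every element of $f(x)-f(y)$ has the same absolute value, namely $d(f(x),f(y))$. Moreover $f(x)\neq f(y)$ forces $x\neq y$, so the same reasoning applied in $H$ gives $|z|=d(x,y)$ for every $z\in x-y$. Picking any such $z$, we obtain $f(z)\in f(x)-f(y)$, and axiom (iii) of Definition \ref{vhhom} then yields $d(f(x),f(y))=|f(z)|=|z|=d(x,y)$, which is equality as claimed.

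The argument is almost entirely formal bookkeeping once the preimage axiom is unpacked into the sum inclusion $f(u+v)\subseteq f(u)+f(v)$. The only genuine subtlety, and the step that makes the \emph{equality} part of the lemma work cleanly, is keeping in mind the distinction between the potentially multi-element set $x-y$ and its well-defined common absolute value $d(x,y)$; this single-valuedness of the norm on non-zero sums is exactly what axiom (iv) of Definition \ref{defofvh} provides.
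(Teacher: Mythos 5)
Your proof is correct and follows essentially the same route as the paper: reduce to the case $f(x)\neq f(y)$, pick $z\in x-y$, use $|f(z)|=|z|=d(x,y)$ together with $f(z)\in f(x)-f(y)$ and the single-valuedness of the norm on $f(x)-f(y)$. The only difference is that you carefully derive the inclusion $f(x-y)\subseteq f(x)-f(y)$ from axiom (ii) of Definition~\ref{vhhom} (via $f(0)=0$ and $f(-y)=-f(y)$), whereas the paper simply asserts $f(z)\in f(x)-f(y)$, relying on the footnote observation that the preimage axiom is stronger than the usual inclusion $f(u+v)\subseteq f(u)+f(v)$; your version fills in that small gap.
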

\begin{proof}If $f(x)=f(y)$ we are done, so we may assume that $f(x)\neq f(y)$ and hence that $x\neq y$.  Let $z\in x-y$.  Then we have $|f(z)|=|z|=d(x,y)$.  On the other hand, $f(z)\in f(x)-f(y)$.  Since $f(x)\neq f(y)$ we have $d(f(x),f(y))=|f(z)|=d(x,y)$, as desired.\end{proof}

We define a map $\phi:R\rightarrow R'$ by letting $\phi(x)$ be the class of $f(\hat{x})$ where $\hat{x}\in H$ is any lift.

\begin{prop}\label{phiwelldef}$\phi$ is a well-defined ring homomorphism.\end{prop}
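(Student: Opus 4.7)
The plan is to verify the four things required of $\phi$: well-definedness, preservation of multiplication, preservation of the unit, and preservation of addition. The strategy throughout is to work with chosen lifts $\hat x\in \mathcal{O}_H$ and check that the resulting class in $R'$ does not depend on the choice. Each of the four axioms of Definition \ref{vhhom}, together with Lemma \ref{increasingrho} and the preceding distance-contracting lemma, will be used exactly once.

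For well-definedness, axiom (iii) of Definition \ref{vhhom} gives $|f(\hat x)|=|\hat x|\leq 1$, so $f(\hat x)\in \mathcal{O}_{H'}$ and it makes sense to take its class in $R'$. If $\hat x$ and $\hat x'$ are two lifts of the same $x\in R$, then $d(\hat x,\hat x')\leq \rho_H$; the preceding lemma gives $d(f(\hat x),f(\hat x'))\leq \rho_H$, and Lemma \ref{increasingrho} yields $\rho_H\leq \rho_{H'}$, so the two images define the same class in $R'$. Multiplicativity is immediate from axiom (i): if $\hat x$ lifts $x$ and $\hat y$ lifts $y$, then $\hat x\hat y$ lifts $xy$ and $f(\hat x\hat y)=f(\hat x)f(\hat y)$ represents $\phi(x)\phi(y)$. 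The unit is preserved since $1\in\mathcal{O}_H$ lifts $1\in R$ and axiom (i) forces $f(1)=1$.

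The core of the argument is additivity, where axiom (ii) of Definition \ref{vhhom} must be brought in. By the construction of Section \ref{tripcons}, $x+y\in R$ is represented by any choice of $\hat z\in\hat x+\hat y$, so $\phi(x+y)$ is the class of $f(\hat z)$. To identify this with $\phi(x)+\phi(y)$, I would apply axiom (ii) with $a=f(\hat x)$ and $b=f(\hat y)$ to obtain $\hat x+\hat y\subseteq f^{-1}(f(\hat x)+f(\hat y))$, and hence $f(\hat z)\in f(\hat x)+f(\hat y)$. Thus $f(\hat z)$ is a legitimate representative for $\phi(x)+\phi(y)$, and the two classes coincide. This last step is the expected obstacle: the multivaluedness of hyperfield addition gives no a priori reason that a lift $\hat z$ of the sum computed inside $H$ is related to any particular element of the sum formed in $H'$. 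Axiom (ii) is the morphism property engineered precisely to remove this ambiguity, and once it is invoked the proof reduces to routine bookkeeping on equivalence classes.
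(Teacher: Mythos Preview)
Your proof is correct and follows essentially the same approach as the paper: well-definedness via the distance-contraction lemma plus Lemma~\ref{increasingrho}, then additivity by showing $f(\hat z)\in f(\hat x)+f(\hat y)$ for any $\hat z\in\hat x+\hat y$. The only cosmetic difference is that the paper phrases the additivity step as $f(\hat z)\in f(\hat x+\hat y)\subseteq f(\hat x)+f(\hat y)$, using directly the weaker inclusion $f(x+y)\subseteq f(x)+f(y)$ (noted in the footnote to Definition~\ref{vhhom} as a consequence of axiom~(ii)), whereas you derive that inclusion from axiom~(ii) explicitly; your remark that axiom~(iv) is used is inaccurate, but this does not affect the argument.
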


\begin{proof}Let $\hat{x},\hat{x}'\in H$ be lifts of $x$.  Then $\hat{x}\equiv_{\rho_H} \hat{x}'$, so $f(\hat{x})\equiv_{\rho_H} f(\hat{x}')$.  Then $f(\hat{x})$ and $f(\hat{x'})$ define the same class in $R'$ by Lemma \ref{increasingrho}.  Thus $\phi$ is well-defined.  Let $x,y\in R$, and let $\hat{x},\hat{y}$ be lifts.  Then any element $\hat{z}\in\hat{x}+\hat{y}$ is a lift of $x+y$.  Then $f(\hat{z})\in f(\hat{x}+\hat{y}) \subseteq f(\hat{x})+f(\hat{y})$, so the class of $f(\hat{z})$ is $\phi(x)+\phi(y)$.  Hence $\phi(x+y)=\phi(x)+\phi(y)$.  The other axioms of a ring homomorphism are easy to verify.
\end{proof}

We will now define a map $\eta:M\rightarrow M'^{\otimes r}\cong M'_r$.  For $x\in M$, we pick a lift $\hat{x}\in\mathfrak{m}_H$.  Then $f(\hat{x})\in\mathfrak{m}_{H'}^r$, and we let $\eta(x)$ be the element of $M'^{\otimes r}$ corresponding to the class of $f(\hat{x})$ in $M'_r$.

\begin{lem}$\eta$ is a well-defined $R$-linear map.  It induces an isomorphism $M\otimes R'\rightarrow M'^{\otimes r}$.\end{lem}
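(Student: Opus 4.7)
The proof breaks into three pieces: well-definedness of $\eta$, its $R$-linearity, and verification that $\eta\otimes 1$ is an isomorphism of $R'$-modules. My plan is to handle these in that order, and to rely heavily on the identifications already set up in the excerpt: $M_i\cong M^{\otimes i}$ via Lemma~\ref{Mitens}, the fact that $r=\log\theta_H/\log\theta_{H'}$ is a positive integer (since $|f(\pi_H)|=\theta_H$ is forced to be a power of $\theta_{H'}$ by discreteness of $H'$, and since $H$ is not a field so $\theta_H<1$), and the identity $\theta_H=\theta_{H'}^r$.

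First, for well-definedness, fix $x\in M$ and let $\hat x,\hat x'\in\mathfrak{m}_H$ be two lifts. By definition $d(\hat x,\hat x')\leq \rho_H\theta_H$. The distance-contraction lemma gives $d(f(\hat x),f(\hat x'))\leq d(\hat x,\hat x')\leq\rho_H\theta_H$. Invoking Lemma~\ref{increasingrho} and $\theta_H=\theta_{H'}^r$, this is at most $\rho_{H'}\theta_{H'}^r$, so $f(\hat x)$ and $f(\hat x')$ represent the same class in $M'_r=\mathfrak{m}_{H'}^r/\equiv_{\rho_{H'}\theta_{H'}^r}$. I also need $f(\hat x)\in\mathfrak{m}_{H'}^r$ in the first place, which holds because $|f(\hat x)|=|\hat x|\leq\theta_H=\theta_{H'}^r$.

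For $R$-linearity, additivity proceeds exactly as in Proposition~\ref{phiwelldef}: if $\hat x,\hat y\in\mathfrak{m}_H$ lift $x,y\in M$ and $\hat z\in\hat x+\hat y$, then $f(\hat z)\in f(\hat x)+f(\hat y)$ is a valid lift of $x+y$ in $\mathfrak{m}_{H'}^r$, so $\eta(x+y)=\eta(x)+\eta(y)$. Compatibility with the $R$-module structure says $\eta(ax)=\phi(a)\,\eta(x)$ for $a\in R$: picking lifts $\hat a\in\mathcal{O}_H$ and $\hat x\in\mathfrak{m}_H$, the product $\hat a\hat x\in\mathfrak{m}_H$ lifts $ax$, and $f(\hat a\hat x)=f(\hat a)f(\hat x)$ simultaneously represents $\eta(ax)$ and $\phi(a)\cdot\eta(x)$ in $M'_r$.

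For the final claim, $M\otimes_R R'$ is free of rank $1$ over $R'$ because $M$ is free of rank $1$ over $R$ (Lemma~\ref{Mitens}), and $M'^{\otimes r}$ is likewise free of rank $1$ over $R'$, so it suffices to check that a generator maps to a generator. Fix uniformizers $\pi_H\in H$ and $\pi_{H'}\in H'$; the class $\bar\pi_H\otimes 1$ generates $M\otimes_R R'$, and its image is $\eta(\bar\pi_H)$, the class of $f(\pi_H)$ in $M'_r$. Since $|f(\pi_H)|=\theta_H=\theta_{H'}^r=|\pi_{H'}^r|$, the element $u:=f(\pi_H)\cdot\pi_{H'}^{-r}\in H'$ has absolute value $1$, i.e.\ lies in $\mathcal{O}_{H'}^\times$; its class $\bar u\in R'$ is then a unit, and $\eta(\bar\pi_H)=\bar u\cdot\overline{\pi_{H'}^r}$ generates $M'^{\otimes r}$. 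The main subtlety is the well-definedness step, where one genuinely uses both $\rho_{H'}\geq\rho_H$ (Lemma~\ref{increasingrho}) and the definition of $r$; everything else is a straightforward transcription of the ring-level argument of Proposition~\ref{phiwelldef} one tensor power up.
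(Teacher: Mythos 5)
Your proof is correct and follows essentially the same route as the paper: well-definedness via the distance-contraction lemma together with $\rho_{H'}\geq\rho_H$ and $\theta_H=\theta_{H'}^r$, then $R$-linearity by lifting, then the isomorphism by checking a generator maps to a generator. You spell out a few details the paper leaves implicit (notably why $\overline{f(\pi_H)}$ generates $M'_r$, via the unit $u=f(\pi_H)\pi_{H'}^{-r}$, and why $r$ is a positive integer), but the strategy is the same.
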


\begin{proof}This is proven in the same manner as Proposition \ref{phiwelldef}.   If we let $\hat{x}'$ be another lift of $x$, then since $\hat{x}\equiv_{\theta_H\rho_H}\hat{x}'$, $f(\hat{x})\equiv_{\rho_{H'}\theta_H} f(\hat{x'})$.  Since $\theta_H=\theta_{H'}^r$, we see that $f(\hat{x})\equiv_{\rho_{H'}\theta_{H'}^r}$, so that $f(\hat{x})$ and $f(\hat{x}')$ define the same element of $M'_r$.  $R$-linearity is straightforward to verify.  Let $\pi\in H$ be a uniformizer.  $M\otimes R'$ is free with generator $\bar{\pi}$, while $M'_r$ is free with generator $\overline{f(\pi)}=\eta(\bar{\pi})$.  Thus $M\otimes R'\rightarrow M'^{\otimes r}$ maps a generator to a generator, so is an isomorphism.\end{proof}

Deligne defined an $R'$-linear map $\epsilon'_{0,r}:M'^{\otimes r}\rightarrow R'$ by $\epsilon'_{0,r}(x^{\otimes r})=\epsilon(x)^r$ when $x$ generates $M'$.  It is straightforward to verify that for $x\in M'^{\otimes r}$, $\epsilon'_{0,r}(x)$ is the class of $\hat{x}$ in $R'$ where $\hat{x}\in \mathfrak{m}_{H'}^r\subseteq \mathcal{O}_{H'}$ is any lift of the element of $M'_r$ which corresponds to $x$.

\begin{lem} $\epsilon'_{0,r}\eta=\phi\epsilon$.\end{lem}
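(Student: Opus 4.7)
The plan is to prove this by a direct diagram chase through the definitions, since both sides turn out to compute the class of $f(\hat{x})$ in $R'$ for a suitable lift.

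First I would fix an element $x \in M$ and a lift $\hat{x} \in \mathfrak{m}_H$ of $x$. I would then trace $x$ around both sides of the equation. On the one hand, $\epsilon(x) \in R$ is by construction the class of $\hat{x}$ regarded now as an element of $\mathcal{O}_H$ (since $\mathfrak{m}_H \subseteq \mathcal{O}_H$), and then $\phi(\epsilon(x)) \in R'$ is the class of $f(\hat{x})$ in $\mathcal{O}_{H'}/\equiv_{\rho_{H'}}$. On the other hand, $\eta(x) \in M'^{\otimes r}$ is the element corresponding to the class of $f(\hat{x})$ in $M'_r = \mathfrak{m}_{H'}^r/\equiv_{\rho_{H'}\theta_{H'}^r}$; this uses that $|f(\hat{x})| = |\hat{x}| \leq \theta_H = \theta_{H'}^r$, so $f(\hat{x}) \in \mathfrak{m}_{H'}^r$ and so indeed serves as a lift.

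Next I would apply the description of $\epsilon'_{0,r}$ supplied just before the lemma: for $y \in M'^{\otimes r}$, $\epsilon'_{0,r}(y)$ is the class in $R'$ of any lift in $\mathfrak{m}_{H'}^r \subseteq \mathcal{O}_{H'}$ of the corresponding element of $M'_r$. Applied to $y = \eta(x)$, taking the lift $f(\hat{x})$, this gives that $\epsilon'_{0,r}(\eta(x))$ is precisely the class of $f(\hat{x})$ in $R'$. This coincides with the value of $\phi(\epsilon(x))$ computed above, so the two maps agree on $x$.

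There is no real obstacle here, since the lemma is essentially a compatibility of two descriptions of the same reduction modulo $\equiv_{\rho_{H'}}$. The only point worth being careful about is the well-definedness: one should check that the identification $M'^{\otimes r} \cong M'_r$ from Lemma \ref{Mitens} is being used consistently on both sides, and that choosing $f(\hat{x})$ as the lift of the corresponding element of $M'_r$ is legitimate. Both of these are immediate from how $\eta$ was defined in the previous lemma, so the proof reduces to a one-line chase.
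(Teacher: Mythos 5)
Your proof is correct, and it is a modest streamlining of the paper's own argument. The paper first reduces to the case that $x$ is the class of a uniformizer $\pi_H$ (invoking $R$-linearity of both sides), then picks a uniformizer $\pi_{H'}$ of $H'$, writes $f(\pi_H)=u\pi_{H'}^r$, and verifies that both $\epsilon'_{0,r}(\eta(x))$ and $\phi(\epsilon(x))$ equal the class of $f(\pi_H)$ in $R'$ by an explicit coordinate computation. You instead work with an arbitrary $x\in M$ and an arbitrary lift $\hat{x}\in\mathfrak{m}_H$, and use the lift-description of $\epsilon'_{0,r}$ that the paper itself records just before the lemma (namely that $\epsilon'_{0,r}$ of an element of $M'^{\otimes r}$ is the class in $R'$ of any lift of the corresponding element of $M'_r$) to show directly that both composites send $x$ to the class of $f(\hat{x})$ in $R'$. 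This avoids the reduction to generators and the choice of uniformizer in $H'$, so it is arguably cleaner; the paper's version has the small advantage of not relying on the ``straightforward to verify'' lift-description, since it recomputes the needed instance of it inline. Your side check that $f(\hat{x})\in\mathfrak{m}_{H'}^r$ (via $|f(\hat{x})|=|\hat{x}|\le\theta_H=\theta_{H'}^r$) is exactly the point needed to make the chase legitimate, and you flag it appropriately.
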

\begin{proof}

By linearity it suffices to prove this when $x\in M$ is the class of a uniformizer $\pi_H\in H$.  We pick a uniformizer $\pi_{H'}$ of $H'$ and write $f(\pi_H)=u\pi_{H'}^r$ with $u\in H'$.  $\eta(x)$ is then the class of $u\pi_{H'}^r$ in the quotient $M'^{\otimes r}\cong M'_r$ of $\mathfrak{m}_{H'}^r$.  Hence $\epsilon'_{0,r}(\eta(x))=\bar{u}\bar{\pi_{H'}}^r=\overline{f(\pi_H)}$ where $\bar{u}$, $\bar{\pi_{H'}}$, and $\overline{f(\pi_H)}$ represent classes in $R'$.  On the other hand, $\epsilon(x)$ is the class of $\pi_H$, so  $\phi\epsilon(x)$ is the class of $f(\pi_H)$.  Hence both maps agree for this choice of $x$ and hence the maps are equal.
\end{proof}

\begin{defn}\label{trfdef} $\mathrm{Tr}(f)$ will denote $(r,\phi,\eta)$ where $r$, $\phi$, and $\eta$ are as above.\end{defn}

We have proven the following theorem.

\begin{thm} $\mathrm{Tr}(f)$ is a morphism of triples.\end{thm}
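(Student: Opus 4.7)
My plan is to observe that this theorem is essentially a summary statement: nearly all of the work has already been done in the preceding lemmas of this section, and what remains is to check that the data $(r,\phi,\eta)$ from Definition \ref{trfdef} fits the axioms of Definition \ref{tripmorph}. Concretely, Proposition \ref{phiwelldef} gives that $\phi:R\to R'$ is a ring homomorphism, the subsequent lemma gives that $\eta:M\to M'^{\otimes r}$ is $R$-linear and induces an isomorphism $M\otimes_R R'\to M'^{\otimes r}$, and the final lemma gives the compatibility $\phi\epsilon=\epsilon'_{0,r}\eta$. So the proof reduces to citing these three results in sequence.

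The one point that is not explicitly handled in the preceding development, and which I would highlight, is that $r=\log\theta_H/\log\theta_{H'}$ must actually be a positive integer in order for the target $M'^{\otimes r}$ and the structure map $\epsilon'_{0,r}$ to make sense as in Definition \ref{tripmorph}. To verify this I would pick a uniformizer $\pi_H\in H$ and use axiom (iii) of Definition \ref{vhhom} to get $|f(\pi_H)|=|\pi_H|=\theta_H$. Since $\theta_H>0$ we have $f(\pi_H)\neq 0$, and since the image of $|\cdot|$ on $(H')^\times$ is discrete with the elements of absolute value at most $1$ being exactly the non-negative integer powers of $\theta_{H'}$ (using that $H'$ is not a field, so $\theta_{H'}<1$), we conclude $\theta_H=\theta_{H'}^r$ for some integer $r\geq 1$.

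I do not anticipate any real obstacle: the theorem is fundamentally a packaging statement. The only mildly nontrivial step is the integrality of $r$, which I handled above by combining the absolute-value-preservation axiom with the discreteness of the target valuation; everything else is a matter of invoking the earlier lemmas in the correct order.
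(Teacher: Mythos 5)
Your proposal is correct and takes essentially the same approach as the paper: the theorem is stated with ``We have proven the following theorem,'' i.e.\ it is a packaging of Proposition \ref{phiwelldef} and the two subsequent lemmas establishing the properties of $\eta$ and the compatibility $\epsilon'_{0,r}\eta=\phi\epsilon$. Your extra verification that $r=\log\theta_H/\log\theta_{H'}$ is a positive integer is a sensible addition that the paper leaves implicit (it is tacitly used, e.g., when writing $f(\hat{x})\in\mathfrak{m}_{H'}^r$ and $\theta_H=\theta_{H'}^r$).
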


\begin{thm} $\mathrm{Tr}$ is a functor from the category of discretely valued hyperfields which are not fields to the category of triples.  \end{thm}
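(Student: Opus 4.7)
The previous lemmas have shown that $\mathrm{Tr}$ sends each discretely valued hyperfield (which is not a field) to a triple, and each morphism of such hyperfields to a morphism of triples. To conclude that $\mathrm{Tr}$ is a functor, it remains to verify the two categorical axioms: preservation of identity morphisms and preservation of composition.

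For identities, applying the construction of Section \ref{func} with $f = \mathrm{id}_H$ gives $r = \log\theta_H / \log\theta_H = 1$, so the target tensor power $M^{\otimes 1}$ equals $M$. The formulas defining $\phi$ and $\eta$ then reduce to taking classes of lifts under the identity on $H$, giving $\phi = \mathrm{id}_R$ and $\eta = \mathrm{id}_M$. Hence $\mathrm{Tr}(\mathrm{id}_H) = (1, \mathrm{id}_R, \mathrm{id}_M)$, which is the identity morphism on $\mathrm{Tr}(H)$ in the sense of Definition \ref{tripmorph}.

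For composition, let $f:H_1\to H_2$ and $g:H_2\to H_3$ be morphisms, and write $\mathrm{Tr}(f) = (r_f,\phi_f,\eta_f)$, $\mathrm{Tr}(g) = (r_g,\phi_g,\eta_g)$, and $\mathrm{Tr}(gf) = (r_{gf},\phi_{gf},\eta_{gf})$. By Definition \ref{tripmorph}, I must check $r_{gf} = r_f r_g$, $\phi_{gf} = \phi_g\phi_f$, and $\eta_{gf} = \eta_g^{\otimes r_f}\eta_f$. The first identity is immediate from multiplying the logarithm ratios. The second follows by lifting $x\in R_1$ to some $\hat x \in \mathcal{O}_{H_1}$ and noting that $f(\hat x)\in \mathcal{O}_{H_2}$ is itself a lift of $\phi_f(x)$, so both $\phi_{gf}(x)$ and $\phi_g(\phi_f(x))$ are represented by $g(f(\hat x)) \in \mathcal{O}_{H_3}$.

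The main obstacle is the module identity $\eta_{gf} = \eta_g^{\otimes r_f}\eta_f$. Both sides are $R_1$-linear maps on the rank-one module $M_1$, so it suffices to compare their values on a single generator $\bar\pi_1$ arising from a uniformizer $\pi_1\in H_1$. Fix uniformizers $\pi_2,\pi_3$ of $H_2,H_3$ and write $f(\pi_1) = u_f\pi_2^{r_f}$ and $g(\pi_2) = u_g\pi_3^{r_g}$ with $u_f,u_g$ of valuation zero. Multiplicativity of $g$ gives $(gf)(\pi_1) = g(u_f)\,u_g^{r_f}\,\pi_3^{r_f r_g}$. Unwinding the canonical identification $(M_3)_k\cong M_3^{\otimes k}$ of Lemma \ref{Mitens}, one computes $\eta_{gf}(\bar\pi_1) = \overline{g(u_f)u_g^{r_f}}\,\pi_3^{\otimes r_f r_g}$; computing the right-hand side step by step via the definition of tensor powers of $R$-linear maps (cf. Remark \ref{negtenspow}) yields $\eta_g^{\otimes r_f}(\eta_f(\bar\pi_1)) = \overline{g(u_f)}\,\bar u_g^{r_f}\,\pi_3^{\otimes r_f r_g}$. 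These agree because the quotient map $\mathcal{O}_{H_3}\to R_3$ is multiplicative. The only subtlety is bookkeeping of tensor-power identifications; no deeper input is required.
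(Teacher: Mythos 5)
Your proof is correct and follows essentially the same route as the paper's: verify the ramification index multiplicativity from the logarithm ratios, check $\phi_{gf}=\phi_g\phi_f$ by lifting and reducing, and check the $\eta$ identity on a single uniformizer class using $R$-linearity. The only difference is that you explicitly verify preservation of identities (which the paper omits as routine, reducing immediately to compatibility with composition); this is a harmless addition.
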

\begin{proof}
We only need to show it is compatible with composition.  That is, if $f:H\rightarrow H'$ and $f':H'\rightarrow H''$ and $\mathrm{Tr(f')}=(r',\phi',\eta')$, then we need to show $\mathrm{Tr}(f'f)=(rr',\phi'\phi,\eta'^{\otimes r}\eta)$.  Let $(\hat{r},\hat{\phi},\hat{\eta})=\mathrm{Tr}(f'f)$.  The claim about ramification indices follows from $r=\frac{\log \theta_H}{\log\theta_{H'}}$ and $r'=\frac{\log \theta_{H'}}{\log\theta_{H''}}$.

Let $x\in\mathcal{O}_H$.  Let $\bar{x}\in R$ be its class.  Then $\phi(\bar{x})$ is the class of $f(x)$.  If $x'\in \mathcal{O}_{H'}$ and if $\bar{x}'\in R'$ is its class then $\phi'(\bar{x}')$ is the class of $f'(x')$.  Applying this to $x'=f(x)$, we see that $\phi'(\phi(\bar{x}))$ is the class of $f'(f(x))$.  Since $\hat{\phi}(\bar{x})$ is the class of $f'(f(x))$ we get $\hat{\phi}=\phi'\phi$.

Let $\pi''\in H''$, $\pi'\in H'$, and $\pi\in H$ be uniformizers.  Let $u''\in H''$ and $u'\in H'$ be such that $f'(\pi')=u''\pi''^{r'}$ and $f(\pi)=u'\pi'^r$.  Then $(f'\circ f)(\pi)=f'(u')u''^{r}\pi^{rr''}$.  Let $x\in M$, $x'\in M'$ and $x''\in M''$ be the classes of $\pi$, $\pi'$, and $\pi''$.  Then $\eta(x)$ is the class of $f(\pi)$ so $\eta(x)=\bar{u}'(x')^{\otimes r}$.  Similarly $\eta'(x')=\bar{u}''(x'')^{\otimes r'}$.  Hence $\eta'^{\otimes r}(\eta(x))=\phi'(\bar{u}')\eta'^{\otimes r}(x'^{\otimes r})=\phi'(\bar{u}')\bar{u}''^{r}x''^{\otimes rr'}$, which is the class of $f'(u')u''^r\pi''^{rr'}=f'(f(\pi))$, which in turn is $\hat{\eta}(x)$.  Since both maps are $R$-linear and since $x$ generates $M$, we have $\hat{\eta}=\eta'^{\otimes r}\eta$.
\end{proof}

\section{Recovering the underlying set of the hyperfield}
Let $T=(R,M,\epsilon)$ be any triple.  We define $v:M^{\otimes i}\rightarrow \mathbb{Z}\cup\{\infty\}$\footnote{Since $M$ is projective of rank 1, negative tensor powers are defined, as in Remark \ref{negtenspow}.} by $v(r\pi^{\otimes i})=v(r)+i$ for $r\in R$, when $\pi$ is a uniformizer.  Let $\mathbf{U}(T)=\{0\}\cup\displaystyle\bigcup_{i\in\mathbb{Z}} \{x\in M^{\otimes i}\mid v(x)=i\}$.  If $(r,\phi,\eta):(R,M,\epsilon)\rightarrow (R',M',\epsilon')$ is a morphism of triples, then it induces maps $\eta^{\otimes i}:M^{\otimes i}\rightarrow M^{\otimes ri}$ which send elements of valuation $i$ to those of valuation $ri$.  These give a map $\mathbf{U}(r,\phi,\eta):\mathbf{U}(R,M,\epsilon)\rightarrow \mathbf{U}(R',M',\epsilon)$.  It is readily verified that $\mathbf{U}$ is a functor.

\begin{prop}\label{forget}$\mathbf{U}\circ\mathrm{Tr}$ is naturally isomorphic to the forgetful functor from the category of discretely valued hyperfields which are not fields to the category of sets.\end{prop}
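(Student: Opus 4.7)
The plan is to construct, for each discretely valued hyperfield $H$ which is not a field, a bijection $\Phi_H : H \to \mathbf{U}(\mathrm{Tr}(H))$ sending $0$ to $0$ and each nonzero $h$ with $v(h) = i$ to its class $[h] \in M_i \cong M^{\otimes i}$ (via Lemma \ref{Mitens} and Remark \ref{negtenspow}). First I would verify that this class genuinely lies in $\mathbf{U}(\mathrm{Tr}(H))$: writing $[h] = r\,\pi^{\otimes i}$ with $\pi$ the class of a fixed uniformizer $\pi_H \in H$, the coefficient $r$ is the class in $R$ of $h/\pi_H^i \in \mathcal{O}_H^{\times}$, hence a unit, so the triple-valuation of $[h]$ equals $v(r) + i = i$.

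The central step, and the one I expect to be the main obstacle, is injectivity of $\Phi_H$. Suppose $h, h' \in H^{\times}$ have the same image; then $v(h) = v(h') = i$ and every element of $h - h'$ has absolute value at most $\rho_H\theta_H^i$. Aiming for a contradiction, assume $h \neq h'$, so that $0 \notin h - h'$ by uniqueness of additive inverses. By axiom (v) of Definition \ref{defofvh}, $h - h'$ is a closed ball of radius $\rho_H\max(|h|,|h'|) = \rho_H\theta_H^i$. Picking any $z \in h - h'$ and using that in an ultrametric every point of a ball may serve as its center, $h - h' = \{y \in H : |y - z| \leq \rho_H\theta_H^i\}$, which contains $0$ since $|z| \leq \rho_H\theta_H^i$. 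This contradicts $0 \notin h - h'$, so $h = h'$.

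Surjectivity should then be routine: given $x \in M^{\otimes i}$ with triple-valuation $i$, write $x = r\,\pi^{\otimes i}$ with $r \in R^{\times}$, and observe that any lift $r' \in \mathcal{O}_H$ of $r$ satisfies $|r'| = 1$ (the invertibility of $r$ in $R$ and the ultrametric inequality, exactly as in the proof of Lemma \ref{Rtdvr}, force this); then $h := r'\pi_H^i$ has $v(h) = i$ and $\Phi_H(h) = x$. The cases $i \leq 0$ are handled analogously using $\pi_H^{-1}$ in place of $\pi_H$.

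Finally, for naturality, given a morphism $f : H \to H'$ with $\mathrm{Tr}(f) = (r, \phi, \eta)$ and $r = \log\theta_H / \log\theta_{H'}$, axiom (iii) of Definition \ref{vhhom} yields $|f(h)| = |h|$, so $v_{H'}(f(h)) = ri$ whenever $v(h) = i$. The construction of $\eta$ in Section \ref{func}, together with standard properties of tensor powers applied to a uniformizer, shows that $\eta^{\otimes i}$ sends the class of $h$ in $M_i$ to the class of $f(h)$ in $M'_{ri}$; this equals $\Phi_{H'}(f(h))$, giving commutativity of the naturality square and completing the proof.
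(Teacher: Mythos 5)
Your proof is correct and takes essentially the same route as the paper: both construct a bijection $H \to \mathbf{U}(\mathrm{Tr}(H))$ by sending a nonzero element of valuation $i$ to its class in $M_i \cong M^{\otimes i}$, verify it lands in the right stratum, and then check naturality via the explicit description of $\eta^{\otimes i}$. The one genuine divergence is the injectivity step: the paper disposes of it by appealing to page~145 of Krasner, whereas you supply a self-contained argument using axiom~(v) of Definition~\ref{defofvh} (via Remark~\ref{discvalrmk} to get the closed-ball case), the uniqueness of additive inverses, and the ultrametric fact that every point of a ball is a center. That is a small but real improvement in that it removes the external reference. Your surjectivity and naturality arguments match the paper's in substance, though more condensed; the claim that a lift $r'$ of a unit $r\in R^\times$ has $|r'|=1$ is exactly what the well-definedness of $v_R$ in Lemma~\ref{Rtdvr} gives, so no gap there.
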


\begin{proof}Let $H$ be a discretely valued hyperfield which is not a field.  Let $T=\mathrm{Tr}(H)=(R,M,\epsilon)$.  Let $M_i$ be as in \S\ref{tripcons}.  Let $C_i=\{x\in H\mid v(x)=i\}$.  Suppose $x\in C_i$ and $y\in H$ are chosen such that $x\equiv_{\theta_H^i\rho_H} y$.  Then by page 145 of Krasner, $x=y$.  Thus the reduction map $C_i\rightarrow M_i$ is injective.   Its image consists of elements with valuation $i$, so we have bijections $C_i\xrightarrow{\alpha_i}\{x\in M_i\mid v(x)=i\}\xrightarrow{\beta_i} \{x\in M^{\otimes i}\mid v(x)=i\}$.  We wish to show these bijections are natural in the sense that the following diagram commutes for any morphism $f:H\rightarrow H'$ of valued hyperfields, where the vertical arrows are the maps induced by $f$, and where we put an apostrophe next to the name of a construction to indicate that the construction is done using $H'$ rather than $H$.\\

$\begin{CD}
C_i @>\alpha_i>> \{x\in M_i\mid v(x)=i\}@>\beta_i>>\{x\in M^{\otimes i}\mid v(x)=i\}\\
@VfVV @V\theta_iVV @V\eta^{\otimes i}VV\\
C'_{ri} @>\alpha'_{ri}>> \{x\in M'_{ri}\mid v(x)=ri\}@>\beta'_{ri}>>\{x\in M'^{\otimes ri}\mid v(x)=ri\}
\end{CD}$\\\

Let $f:H\rightarrow H'$ be a morphism to another discretely valued hyperfield which is not a field.  Let $\mathrm{Tr}(H')=(R',M',\epsilon')$, and let $C_i'$ and $M_i'$ be like $C_i$ and $M_i$, but defined in terms of $H'$ instead of $H$.  Let $(r,\phi,\eta)=\mathrm{Tr}(f)$.  Let $x\in C_i$.  Then $\alpha_{ri}(f(x))$ is the reduction of $f(x)$ modulo $\equiv_{\theta_{H'}^i\rho_{H'}}$.  Let $\theta_i:M_i\rightarrow M'_{ri}$ be the map corresponding to $\eta^{\otimes i}$.  It is routine to verify that $\theta_i(x)$ is obtained by lifting $x$, applying $f$, and reducing.  Then $\theta_i(\alpha_i(x))$ is obtained by reducing $x$, picking a lift, applying $f$ to that lift, and reducing again.  Thus $\theta_i(\alpha_i(x))=\alpha_{ri}(f(x))$, so the left square commutes.  The right square commutes by the choice of $\theta_i$.  Thus the bijections describing the horizontal arrows of the above diagram are natural.  Hence, so is the induced bijection $H=\{0\}\cup\displaystyle\bigcup_{i\in\mathbb{Z}} C_i\rightarrow \{0\}\cup\displaystyle\bigcup_{i\in\mathbb{Z}} \{x\in M^{\otimes i}\mid v(x)=i\}=\mathbf{U}(\mathrm{Tr}(H))$, and the result follows.
\end{proof}

\begin{cor}$\mathrm{Tr}$ is faithful.
\end{cor}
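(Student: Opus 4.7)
The plan is to derive faithfulness of $\mathrm{Tr}$ as an essentially formal consequence of Proposition \ref{forget}. The underlying observation is that if a composite of functors $G \circ F$ is faithful, then $F$ itself is faithful: any two parallel morphisms $f, g$ with $F(f) = F(g)$ satisfy $G(F(f)) = G(F(g))$, which forces $f = g$. Here we will apply this with $F = \mathrm{Tr}$ and $G = \mathbf{U}$.

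First I would observe that the forgetful functor from the category of discretely valued hyperfields to the category of sets is faithful. This is immediate: a morphism of valued hyperfields in the sense of Definition \ref{vhhom} is a set-theoretic function on underlying sets that satisfies extra properties, so two morphisms with the same underlying function are literally the same morphism.

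Next, by Proposition \ref{forget}, the composite $\mathbf{U} \circ \mathrm{Tr}$ is naturally isomorphic to this forgetful functor. Faithfulness is invariant under natural isomorphism (conjugating by the isomorphism components preserves injectivity on hom-sets), so $\mathbf{U} \circ \mathrm{Tr}$ is faithful. Applying the categorical observation above yields that $\mathrm{Tr}$ is faithful, completing the proof.

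There is essentially no obstacle here; the content of the corollary is entirely absorbed into Proposition \ref{forget}, and the only thing to double-check is the trivial fact that morphisms of valued hyperfields are determined by their underlying set maps, which is immediate from Definition \ref{vhhom}.
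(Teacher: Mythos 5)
Your argument is correct and is essentially the paper's own proof, spelled out in slightly more detail: the paper also derives faithfulness of $\mathrm{Tr}$ directly from Proposition \ref{forget} together with the faithfulness of the forgetful functor, implicitly using the same two categorical facts (faithfulness of a composite implies faithfulness of the first factor, and faithfulness is preserved under natural isomorphism).
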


\begin{proof}This follows from Proposition \ref{forget} and the fact that the forgetful functor is faithful.\end{proof}

\section{Equivalence}

Let $H$ be a discretely valued hyperfield which is not a field.  We have seen that there is a canonical bijection $\psi:\mathbf{U}(\mathrm{Tr}(H))\rightarrow H$, so $\widetilde{H}=\mathbf{U}(\mathrm{Tr}(H))$ is a discretely valued hyperfield\footnote{By decreeing $\psi$ to be an isomorphism.}.  We will now describe the addition, multiplication, and absolute value on $\widetilde{H}$ more explicitly.  We will retain the notation of the previous section.  Let $S_i=\{x\in M^{\otimes i}\mid v(x)=i\}$, so $\widetilde{H}=\{0\}\cup\bigcup_i S_i$.  Let $\pi_H$ be a uniformizer in $H$, and $\pi_M$ be its image in $M$ (which must generate $M$).  Throughout this section, we will identify $M_i$ with $M^{\otimes i}$.

For $x\in S_i$, it follows from results of the previous section that $|\psi(x)|=\theta_H^i$, so $|x|=\theta_H^i$.  For $x\in S_i$ and $y\in S_j$, we can easily verify that $xy\in S_{i+j}\subseteq M^{\otimes i+j}$ is the image of $x\otimes y$ under $M^{\otimes i}\otimes M^{\otimes j}\rightarrow M^{\otimes i+j}$.  

Let $x\in S_j$ and $y\in S_i$.  Without loss of generality, we assume $i\geq j$.  Let $z=x+_{M_j}\epsilon_{j,i}(y)\in M_j$\footnote{We use the notation $+_{M_j}$ to distinguish this addition from the addition $+_{\widetilde{H}}$ which comes from the hyperfield structure on $\widetilde{H}$.}, where $\epsilon_{j,i}:M^{\otimes i}\rightarrow M^{\otimes j}$ is the map induced by $\epsilon$.  Let $\hat{x},\hat{y}\in H$ be lifts of $x\in M_j$ and $y\in M_i$. Note that $\hat{y}$ is also a lift of $\epsilon_{j,i}(y)$.  Then $z$ is by definition the reduction of any element of $\hat{x}+\hat{y}$.  Since $|y|\leq |x|=\theta_H^j$, $\hat{x}+\hat{y}$ is a ball of radius $\rho_H\theta_H^j$, so it is the preimage of $z$ under the reduction map.  Let $w\in \widetilde{H}$.  It is easy to check that $\psi(w)\in H$ reduces to $z\in M_j$ if and only if either both $w=0$ and $z=0$ hold or if $w\in S_k$ for some $k\geq j$ and $\epsilon_{j,k}(w)=z$, because any element of $H$ corresponding to $w\in M_k$ reduces to $\epsilon_{j,k}(w)\in M_j$.  Thus $x+_{\widetilde H} y=\bigcup_{k\geq j} \{w\in S_k\mid \epsilon_{j,k}(w)=x+\epsilon_{j,i}(y)\}$, or it is the union of this set with $\{0\}$ depending on whether $x=-y$.

Before reconstructing morphisms of discretely valued hyperfields from morphisms of triples, we need the following lemma.

\begin{lem}\label{nonobviouslem}Let $(r,\phi,\eta):(R,M,\epsilon)\rightarrow (R',M',\epsilon')$ be a morphism of triples.  Let $k\geq j$.  Then $\epsilon'_{rj,rk}\eta^{\otimes k}=\eta^{\otimes j}\epsilon_{j,k}$.\end{lem}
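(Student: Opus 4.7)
The plan is to reduce the identity to a check on a single generator, using the fact that both $\epsilon'_{rj,rk}\eta^{\otimes k}$ and $\eta^{\otimes j}\epsilon_{j,k}$ are $R$-linear maps $M^{\otimes k}\to M'^{\otimes rj}$ (with the target carrying the $R$-module structure pulled back along $\phi$), and $M^{\otimes k}$ is free of rank one over $R$. So I fix a generator $x$ of $M$ and a generator $y$ of $M'$, and write $\eta(x)=u\,y^{\otimes r}$ for some $u\in R'$. Since $\eta$ induces an isomorphism $M\otimes_R R'\xrightarrow{\sim}M'^{\otimes r}$ by Definition \ref{tripmorph}, the element $u$ must be a unit in $R'$.

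Next I would evaluate both sides on the generator $x^{\otimes k}$. On the left, using the canonical identification $(M'^{\otimes r})^{\otimes k}\cong M'^{\otimes rk}$ and pulling the scalar out of each tensor factor, I get
\[
\eta^{\otimes k}(x^{\otimes k})=\eta(x)^{\otimes k}=u^k\,y^{\otimes rk},\qquad \epsilon'_{rj,rk}(u^k y^{\otimes rk})=u^k\,\epsilon'(y)^{r(k-j)}\,y^{\otimes rj}.
\]
On the right, from the definition of $\epsilon_{j,k}$ and the $R$-linearity of $\eta^{\otimes j}$ (via $\phi$),
\[
\eta^{\otimes j}\bigl(\epsilon_{j,k}(x^{\otimes k})\bigr)=\eta^{\otimes j}\bigl(\epsilon(x)^{k-j}x^{\otimes j}\bigr)=\phi(\epsilon(x))^{k-j}\,u^j\,y^{\otimes rj}.
\]

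Finally, since $u$ is a unit and $y^{\otimes rj}$ generates $M'^{\otimes rj}$, equality of the two expressions is equivalent to $(u\epsilon'(y)^r)^{k-j}=\phi(\epsilon(x))^{k-j}$. But applying the compatibility axiom $\phi\epsilon=\epsilon'_{0,r}\eta$ from Definition \ref{tripmorph} to the element $x\in M$ yields
\[
\phi(\epsilon(x))=\epsilon'_{0,r}(\eta(x))=\epsilon'_{0,r}(u\,y^{\otimes r})=u\,\epsilon'(y)^r,
\]
so the required equality holds, and the lemma follows.

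The main obstacle is not a conceptual one but careful bookkeeping: I must correctly extract the scalar $u$ out of each of the $k$ tensor factors of $\eta(x)^{\otimes k}$, keep track of the two $R$-module structures on $M'^{\otimes rj}$ (the intrinsic $R'$-structure and the pullback $R$-structure via $\phi$), and match the exponents $k-j$ on the $R$-side with $r(k-j)$ on the $R'$-side before invoking the triple-morphism compatibility in its simplest form.
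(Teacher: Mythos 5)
Your proof is correct and follows essentially the same route as the paper's: reduce to a single generator $x^{\otimes k}$ of $M^{\otimes k}$, write $\eta(x)=u\,y^{\otimes r}$ with $u$ a unit (justified by the isomorphism condition in Definition \ref{tripmorph}), compute both sides explicitly, and close with the compatibility $\phi\epsilon=\epsilon'_{0,r}\eta$. The paper uses the notation $x'$, $\pi'=\epsilon'(x')$, $\pi=\epsilon(x)$ where you use $y$, $\epsilon'(y)$, $\epsilon(x)$, but the computation and the invocation of the axiom are identical.
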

\begin{proof}Let $x'$ generate $M'$, and let $\pi'=\epsilon'(x')$.  Let $x$ generate $M$ and let $\pi=\epsilon(x)$.  Since $x$ generates $M\otimes R'$, $\eta(x)$ generates $M'^{\otimes r}$ as an $R'$-module. $x'^{\otimes r}$ also generates $M'^{\otimes r}$.  Hence we can pick $u\in R'^{\times}$ such that $\eta(x)=ux'^{\otimes r}$.  Now $\eta^{\otimes k}(x^{\otimes k})=u^kx'^{\otimes kr}$.  Then $\epsilon'_{rj,rk}(\eta^{\otimes k}(x^{\otimes k}))=\pi'^{(k-j)r}u^kx'^{\otimes jr}$.  On the other hand, $\epsilon_{j,k}(x^{\otimes k})=\pi^{k-j}x^{\otimes j}$ so $\eta^{\otimes j}(\epsilon_{j,k}(x^{\otimes k}))=u^j\phi(\pi)^{(k-j)}x'^{\otimes jr}$.  Thus the identity we wish to prove will follow if we can establish that $u^j\phi(\pi)^{k-j}x'^{\otimes jr}=\pi'^{(k-j)r}u^kx'^{\otimes jr}$.  By definition \ref{tripmorph}, we have $\phi(\pi)=\phi(\epsilon(x))=\epsilon'_{0,r}(\eta(x))=\epsilon'_{0,r}(ux'^{\otimes r})=u\pi'^{\otimes r}$.  Raising this to the $(k-j)$-th power and multiplying by $u^jx'^{\otimes jr}$ yields $u^j\phi(\pi)^{k-j}x'^{\otimes jr}=\pi'^{(k-j)r}u^kx'^{\otimes jr}$.  Hence $\eta^{\otimes j}(\epsilon_{j,k}(x^{\otimes k}))=u^j\phi(\pi)^{(k-j)}x'^{\otimes jr}=\pi'^{(k-j)r}u^kx'^{\otimes jr}=\epsilon'_{rj,rk}(\eta^{\otimes k}(x^{\otimes k}))$ so that $\epsilon'_{rj,rk}\eta^{\otimes k}$ and $\eta^{\otimes j}\epsilon_{j,k}$ agree on a generator, and hence are equal.
\end{proof}

Let $H,H'$ be discretely valued hyperfields which are not fields.  Let $(r,\phi,\eta):\mathrm{Tr}(H)\rightarrow \mathrm{Tr}(H')$ be a morphism of triples.  Let $f=\mathbf{U}(r,\phi,\eta):\widetilde{H}\rightarrow \widetilde{H'}$.  We would like to show that $f$ is a morphism of valued hyperfields, but this is not necessarily true.  Let $\pi\in H$ be a uniformizer.  If $f$ were a morphism of valued hyperfields, we would have $|f(\pi)|=|\pi|=\theta_H$ so that $v(f(\pi))=\frac{\log \theta_H}{\log\theta_{H'}}$.  However, we instead have $v(f(\pi))=r$.

\begin{prop}\label{hfmorphcons}If $r=\frac{\log \theta_H}{\log\theta_{H'}}$, then $f$ is a morphism of valued hyperfields.\end{prop}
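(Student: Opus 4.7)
The plan is to verify, in turn, each of the four axioms of Definition \ref{vhhom} for $f$. Axioms (i) and (iii) are the easiest. For (iii): given $x \in S_i$, one has $f(x) = \eta^{\otimes i}(x) \in S'_{ri}$, so $|f(x)| = \theta_{H'}^{ri} = \theta_H^i = |x|$, where the middle equality is exactly the hypothesis $r = \log \theta_H / \log \theta_{H'}$. For (i): the multiplication on $\widetilde H$ described just above the proposition is induced by the canonical tensor maps $M^{\otimes i} \otimes M^{\otimes j} \to M^{\otimes (i+j)}$, and these are compatible with $\eta^{\otimes i} \otimes \eta^{\otimes j} = \eta^{\otimes(i+j)}$ by the basic functoriality of tensor powers recalled in Remark \ref{negtenspow}.

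Axiom (iv) reduces to a direct computation of $f^{-1}(1)$: since $\eta^{\otimes k}$ sends $S_k$ into $S'_{rk}$, only elements of $S_0 = R^\times$ can map to $1 \in S'_0$, and on $S_0$ the map $f$ coincides with $\phi : R \to R'$. Hence $f^{-1}(1) = 1 + \ker\phi$. Because $R$ is a truncated DVR (Lemma \ref{Rtdvr}), $\ker\phi$ is a power of $\mathfrak{m}_R$, so $1 + \ker\phi$ is a closed ball around $1$ in $R$, which transports via the canonical bijection $\psi$ to a closed ball around $1$ in $H$.

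The substantive work is axiom (ii). Given $a,b \in f(\widetilde H)$, choose preimages $\alpha \in f^{-1}(a) \cap S_j$ and $\beta \in f^{-1}(b) \cap S_i$ with $j \leq i$, so that $a \in S'_{rj}$ and $b \in S'_{ri}$. Unwinding the explicit formula for hyper-addition on $\widetilde H$ and applying Lemma \ref{nonobviouslem} to rewrite $\epsilon'_{rj,rk} \circ \eta^{\otimes k}$ as $\eta^{\otimes j} \circ \epsilon_{j,k}$, one finds that for $y \in S_k$ with $k \geq j$, the condition $y \in f^{-1}(a+b)$ is equivalent to $\eta^{\otimes j}(\epsilon_{j,k}(y)) = \eta^{\otimes j}(\alpha + \epsilon_{j,i}(\beta))$. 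The inclusion $f^{-1}(a) + f^{-1}(b) \subseteq f^{-1}(a+b)$ then follows at once from the $R$-linearity of $\eta^{\otimes j}$. For the reverse inclusion, set $\delta = \epsilon_{j,k}(y) - (\alpha + \epsilon_{j,i}(\beta)) \in \ker \eta^{\otimes j}$. Since $\eta$ induces an isomorphism $M \otimes_R R' \to M'^{\otimes r}$, one has $\ker \eta^{\otimes j} = (\ker \phi) \cdot M^{\otimes j}$, and $\ker\phi$ lies strictly inside $\mathfrak{m}_R$ (as $\phi$ is a unital ring map into a nonzero ring); hence $\delta$ has valuation strictly greater than $j$. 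Thus $\alpha' := \alpha + \delta$ still lies in $S_j$ with $f(\alpha') = a$, and $\epsilon_{j,k}(y) = \alpha' + \epsilon_{j,i}(\beta)$ exhibits $y \in \alpha' + \beta$. The case where $0 \in a+b$ is handled in parallel by tracking the extra $\{0\}$ in the hyperunion. The main obstacle is precisely this adjustment step: because $\phi$ need not be injective, a naive lift of an addition relation through $f$ fails, and the entire argument rests on the observation that the correction term $\delta$ has sufficiently high valuation not to alter $v(\alpha) = j$.
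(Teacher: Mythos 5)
Your proof is correct and takes essentially the same route as the paper: each axiom of Definition \ref{vhhom} is checked in turn, with the substance concentrated in axiom (ii), where Lemma \ref{nonobviouslem} and a valuation estimate on the correction term are used exactly as in the paper (the paper phrases the estimate as ``$\eta^{\otimes j}$ multiplies valuation by $r$'', you phrase it as $\ker\eta^{\otimes j}=(\ker\phi)M^{\otimes j}\subseteq\mathfrak{m}_R M^{\otimes j}$, which amounts to the same thing). One small slip: $\ker\phi$ lies \emph{inside} $\mathfrak{m}_R$, not necessarily strictly inside (equality is possible, e.g.\ when $R'$ is the residue field of $R$), but this does not affect your argument since all you need is $v(\delta)\geq j+1$.
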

\begin{proof}
By construction, $f$ maps elements of $S_i$ to elements of $S'_{ri}$ (via the maps $\eta^{\otimes i}$), and $r=\frac{\log \theta_H}{\log\theta_{H'}}$, so $f$ preserves absolute value.  $f$ preserves multiplication, because the multiplication is defined in terms of $M^{\otimes i}\otimes M^{\otimes j}\rightarrow M^{\otimes i+j}$ and the corresponding maps for $M'$, and because the maps $S_i\rightarrow S'_{ri}$ are just $\eta^{\otimes i}$.  Let $x\in \widetilde{H}$ be such that $f(x)=1$.  Then $x\in S_0\subseteq R$, and $\phi(x)=1$, so $x-1\in \ker(\phi)$.  Conversely, if $x-1\in \ker(\phi)$, then $x\in S_0$ and $f(x)=1$ when we view $x$ as an element of $\widetilde{H}$.  But the preimage of $\ker(\phi)$ (or of any other ideal of $R$) in $\mathcal{O}_H$ is a ball around $0$.  Hence the equation $f(x)=1$ is equivalent to a bound on $d(1,x)=|x-1|$, so the fiber of $1$ is a ball.  Consequently all fibers are balls.

Let $x,y\in \widetilde{H}$.  We wish to show that $f^{-1}(f(x)+_{\widetilde{H}'}f(y))=f^{-1}(f(x))+_{\widetilde{H}}f^{-1}(f(y))$.  Let $z$ be such that $f(z)\in f(x)+_{\widetilde{H'}}f(y)$.  For simplicity we will consider only the case where $z\neq 0$; the other case is trivial.  Suppose $i=v(y)\geq v(x)=j$.  Let $k=v(z)$.  Then $v(f(z))=rk$, and similarly for $x$ and $y$.  Then $\epsilon'_{rj,rk}(f(z))=f(x)+\epsilon'_{rj,ri}(f(y))$, by the description of $+_{\widetilde{H'}}$.  So $\epsilon'_{rj,rk}(\eta^{\otimes k}(z))=\eta^{\otimes j}x+\epsilon'_{rj,ri}(\eta^{\otimes i}(y))$.  Using Lemma \ref{nonobviouslem}, $\eta^{\otimes j} (\epsilon_{j,k}(z))=\eta^{\otimes j} (x)+\eta^{\otimes j} (\epsilon_{j,i}(y))$.  Let $x'=\epsilon_{j,k}(z)-\epsilon_{j,i}(y)$.  Since $f(x)=\eta^{\otimes j}(x)=\eta^{\otimes j} (\epsilon_{j,k}(z)-\epsilon_{j,i}(y))$, we have $f(x')=f(x)$ and $v(x')=v(x)=j$\footnote{To see that $x'\in S_j$, one notes that $rv(x)=v(\eta^{\otimes j}(x))=v(\eta^{\otimes j}(x'))=rv(x')$ so $v(x)=v(x')$.}.  Since $\epsilon_{j,k}(z)=x'+\epsilon_{j,i}(y)$, we have $z\in x'+_{\widetilde{H}}y$.  We then have $z\in f^{-1}(f(x))+_{\widetilde{H}}f^{-1}(f(y))$.  Hence $f^{-1}(f(x)+_{\widetilde{H'}}f(y))\subseteq f^{-1}(f(x))+_{\widetilde{H}}f^{-1}(f(y))$.

For the reverse inclusion, suppose $x,y\in \widetilde{H}$ with $j=v(x)\geq v(y)=i$, and let $z\in f^{-1}(f(x))+_{\widetilde{H}}f^{-1}(f(y))$.  We may pick $x',y'\in\widetilde{H}$ such that $z\in x'+_{\widetilde{H}} y'$, $f(x')=f(x)$, and $f(y')=f(y)$.  Since $f$ preserves absolute value, $v(x')=v(x)$ and $v(y')=v(y)$.  Let $k=v(z)$.  Since $z\in  x'+_{\widetilde{H}} y'$, we have $\epsilon_{j,k}(z)=x'+\epsilon_{j,i}(y')$ so $\eta^{\otimes j}(\epsilon_{j,k}(z))=\eta^{\otimes j}(x')+\eta^{\otimes j}(\epsilon_{j,i}(y'))$.  Then $\epsilon'_{rj,rk}(f(z))=\epsilon'_{rj,rk}(\eta^{\otimes k}(z))=\eta^{\otimes j}(x')+\epsilon'_{rj,ri}(\eta^{\otimes i}(y'))=f(x')+\epsilon'_{rj,ri}(f(y'))=f(x)+\epsilon'_{rj,ri}(f(y))$.  Then $f(z)\in f(x)+_{\widetilde{H}'}f(y)$.  Hence $z\in f^{-1}( f(x)+_{\widetilde{H}'}f(y))$ as desired.  Hence $f^{-1}(f(x)+_{\widetilde{H'}}f(y))= f^{-1}(f(x))+_{\widetilde{H}}f^{-1}(f(y))$.
\end{proof}

Note that since $H\cong \widetilde{H}$, we have $\mathrm{Tr}(H)\cong \mathrm{Tr}(\widetilde{H})$. 

\begin{cor}\label{fullcor}Let $(r,\phi,\eta):\mathrm{Tr}(H)\rightarrow \mathrm{Tr}(H')$ be a morphism of triples such that $r=\frac{\log \theta_H}{\log\theta_{H'}}$.  Then there is a morphism $f:H\rightarrow H'$ such that $(r,\phi,\eta)=\mathrm{Tr}(f)$.\end{cor}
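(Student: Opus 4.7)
The plan is to deduce the corollary directly from Proposition \ref{hfmorphcons} by transporting the morphism of valued hyperfields it produces through the canonical identification of Proposition \ref{forget}. Specifically, I apply Proposition \ref{hfmorphcons} to $(r,\phi,\eta)$ under the hypothesis $r=\frac{\log\theta_H}{\log\theta_{H'}}$ to obtain a morphism $\widetilde{f}:=\mathbf{U}(r,\phi,\eta):\widetilde{H}\to\widetilde{H'}$ of valued hyperfields. The canonical bijections $\psi:\widetilde{H}\xrightarrow{\sim}H$ and $\psi':\widetilde{H'}\xrightarrow{\sim}H'$ of Proposition \ref{forget} are isomorphisms of valued hyperfields by the very definition of $\widetilde{H}$ and $\widetilde{H'}$ given at the start of the preceding section (where the structure on $\widetilde{H}$ is declared so as to make $\psi$ an isomorphism). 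Hence $f:=\psi'\circ\widetilde{f}\circ\psi^{-1}$ is a morphism of valued hyperfields $H\to H'$, which is the candidate lift.

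It then remains to verify that $\mathrm{Tr}(f)=(r,\phi,\eta)$. By the construction of $\mathrm{Tr}$ in Section \ref{func}, $\mathrm{Tr}(f)$ has ramification index $\frac{\log\theta_H}{\log\theta_{H'}}=r$; its ring component sends the class of a lift $\hat{x}\in\mathcal{O}_H$ to the class of $f(\hat{x})$ in $R'$; and its module component sends $\bar{x}\in M$ to the class of $f(\hat{x})$ in $M'^{\otimes r}\cong M'_r$. Unwinding the explicit description of $\mathbf{U}(r,\phi,\eta)$ (which applies $\phi$ on $S_0$ and $\eta^{\otimes i}$ on each $S_i$) and of $\psi$ (which on each $C_i$ is the reduction map, per the proof of Proposition \ref{forget}), each of the three pieces of $\mathrm{Tr}(f)$ matches the corresponding piece of $(r,\phi,\eta)$ on the nose.

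I do not expect any serious obstacle: once Proposition \ref{hfmorphcons} is in hand, the corollary is essentially a bookkeeping exercise in the identification $\psi$. The only point worth being slightly careful about is that $\psi$ really is a morphism of valued hyperfields and not merely of underlying sets, so that conjugating $\widetilde{f}$ by $\psi$ and $\psi'$ transports the associated triple without distortion; this is precisely what is arranged by the definition of the valued-hyperfield structure on $\widetilde{H}$. With that in place, the corollary falls out by a diagram chase that uses no ingredients beyond those already assembled in the proof of Proposition \ref{hfmorphcons} and the naturality spelled out in Proposition \ref{forget}.
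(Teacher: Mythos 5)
Your overall plan is the same as the paper's: apply Proposition \ref{hfmorphcons} to get the valued-hyperfield morphism $\widetilde{f}=\mathbf{U}(r,\phi,\eta):\widetilde{H}\to\widetilde{H'}$, transport it through the natural identifications $\psi$, $\psi'$ of Proposition \ref{forget} to obtain $f:H\to H'$, and then verify $\mathrm{Tr}(f)=(r,\phi,\eta)$. That much is right, and it is exactly what the paper does.

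The weak point is the claim that the verification is pure bookkeeping and that each piece of $\mathrm{Tr}(f)$ matches "on the nose." Unwinding the definitions really does give, on the nose, (i) the ramification index, (ii) that $\hat\eta(m)=\eta(m)$ for a uniformizer $m\in M$ (since $\widetilde{f}$ acts on $S_1$ by $\eta$ itself), and (iii) that $\hat\phi(u)=\phi(u)$ for units $u\in R^\times$ (since $\widetilde{f}$ acts on $S_0$ by $\phi$). But $\mathbf{U}(r,\phi,\eta)$ only exposes $\phi$ on $R^\times$, not on $\mathfrak{m}_R$: for $x\in\mathcal{O}_H$ with $v(x)=j>0$ the class of $x$ in $R=M_0$ is $\epsilon_{0,j}(\widetilde{x})$, while the class of $f(x)$ in $R'$ is $\epsilon'_{0,rj}(\eta^{\otimes j}(\widetilde{x}))$, so equality of $\hat\phi$ and $\phi$ on $\mathfrak{m}_R$ requires the identity $\epsilon'_{0,rj}\eta^{\otimes j}=\eta^{\otimes 0}\epsilon_{0,j}$. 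This is precisely Lemma \ref{nonobviouslem}, and it is not a definitional unwinding; indeed the paper invokes exactly that lemma at this step. (A slicker alternative: once $\hat\eta=\eta$ is known on a generator, hence everywhere by $R$-linearity, the defining relations $\hat\phi\epsilon=\epsilon'_{0,r}\hat\eta$ and $\phi\epsilon=\epsilon'_{0,r}\eta$ force $\hat\phi=\phi$ on $\mathfrak{m}_R=\epsilon(M)$, and combined with agreement on $R^\times$ this gives $\hat\phi=\phi$. Either route works, but some argument is needed.) To be fully correct you should state this step explicitly rather than asserting an on-the-nose match for the ring component.
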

\begin{proof}

We have a canonical isomorphism $H\cong \widetilde{H}$.  For $x\in H$ we write $\widetilde{x}$ for the corresponding element of $\widetilde{H}$, and we use similar notation for elements of $\widetilde{H}'$ as well.  Let $\widetilde{f}:\widetilde{H}\rightarrow \widetilde{H'}$ be given by $\widetilde{f}=\mathbf{U}(r,\phi,\eta)$, and let $f:H\rightarrow H'$ be obtained by composing $\widetilde{f}$ with the isomorphisms $H\cong \widetilde{H}$ and $H'\cong \widetilde{H}'$.  Let $(\hat{r},\hat{\phi},\hat{\eta})=\mathrm{Tr}(\widetilde{f})$.  It is easy to check $r=\hat{r}$.

Now let $x\in H$ have $v(x)=j\geq i$.  One easily sees that $\widetilde{f(x)}=\widetilde{f}(\widetilde{x})=\eta^{\otimes j}(\widetilde{x})$.  Let $u\in M^{\otimes i}\cong M_i$ be the class of $x\in \mathfrak{m}_H^i$.  Then $\widetilde{x}\in S_j\subseteq M^{\otimes j}$ satisfies $\epsilon_{i,j}(\widetilde{x})=u$.  Now $\eta^{\otimes i}(u)=\eta^{\otimes i}(\epsilon_{i,j}(\widetilde{x}))=\epsilon'_{ri,rj}(\eta^{\otimes j}(\widetilde{x}))=\epsilon'_{ri,rj}(\widetilde{f(x)})$.  Thus $\eta^{\otimes i}$ sends $u\in M^{\otimes i}\cong M_i$ (which is the class of $x\in\mathfrak{m}_H^i$) to $\epsilon'_{ri,rj}(\widetilde{f(x)})\in M'^{\otimes ri}\cong M'_{ri}$ (which is the class of $f(x)\in\mathfrak{m}_{H'}^{ri}$) whenever $i\leq v(x)$.  Since $\eta^{\otimes 0}:R=M^{\otimes 0}\rightarrow M'^{\otimes 0}=R'$ is just $\phi$, we see that for any $x\in\mathcal{O}_H$, $\phi$ sends the class of $x$ to the class of $f(x)$. Hence $\phi=\hat{\phi}$.  Applying the result instead to $\eta^{\otimes 1}=\eta$ shows that for any $x\in\mathfrak{m}_K$, $\eta$ sends the class of $x$ to the class of $f(x)$.  Hence $\eta=\hat{\eta}$.  Thus $\mathrm{Tr}(f)=(r,\phi,\eta)$.
\end{proof}

\begin{rmk}\label{rescaleabs}If $(r,\phi,\eta):\mathrm{Tr}(H)\rightarrow \mathrm{Tr}(H')$ is any morphism of triples, then we may define a new valued hyperfield $\mathring{H}'$ by setting $\mathring{H}'=H'$ as hyperfields but rescaling the absolute value in such a way that $\theta_{\mathring{H}'}=\theta_H^{1/r}$.  Note that $\mathrm{Tr}(H')=\mathrm{Tr}(\mathring{H}')$.  Then the morphism $(r,\phi,\eta):\mathrm{Tr}(H)\rightarrow \mathrm{Tr}(\mathring{H}')$ may be lifted to a morphism $H\rightarrow \mathring{H}'$.  Hence any morphism of triples lifts to a morphism of discretely valued hyperfields as long as we are willing to replace the absolute value on the target with a different but equivalent absolute value.\end{rmk}

\begin{thm}$\mathrm{Tr}$ is essentially surjective.\end{thm}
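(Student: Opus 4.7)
The plan is to construct, for each triple $T=(R,M,\epsilon)$, a discretely valued hyperfield $H$ (which is not a field) with $\mathrm{Tr}(H)\cong T$, by taking the underlying set to be $\mathbf{U}(T)$ and equipping it with the hyperfield structure dictated by the explicit formulas derived at the start of Section 6. Specifically, let $n=l(R)$, choose any $\theta\in(0,1)$, and put $H=\mathbf{U}(T)=\{0\}\cup\bigcup_{i\in\mathbb{Z}} S_i$ where $S_i=\{x\in M^{\otimes i}\mid v(x)=i\}$. Define $|x|=\theta^i$ for $x\in S_i$ and $|0|=0$; define multiplication by the canonical isomorphisms $M^{\otimes i}\otimes M^{\otimes j}\cong M^{\otimes(i+j)}$ (this lands in $S_{i+j}$ by additivity of $v$); and, for $x\in S_j,\,y\in S_i$ with $i\geq j$, define
\[
x+y \;=\; \bigcup_{k\geq j}\{w\in S_k \mid \epsilon_{j,k}(w)=x+_{M^{\otimes j}}\epsilon_{j,i}(y)\},
\]
together with $\{0\}$ adjoined precisely when $x+_{M^{\otimes j}}\epsilon_{j,i}(y)=0$ in $M^{\otimes j}$. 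The candidate norm is $\rho_H=\theta^n$.

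Next I would verify that $H$ is a valued hyperfield. Commutativity of addition is immediate from commutativity in $M^{\otimes j}$; the neutral element is $0$; the additive inverse $-x\in S_i$ of $x\in S_i$ is the one given by the $R$-module inverse in $M^{\otimes i}$. The reversibility axiom (e) follows from the corresponding identity in the $M^{\otimes j}$'s, as does single-valued multiplication. For associativity of the multivalued sum, one chooses the largest valuation among the three summands and reduces the computation to associativity of the ordinary sum in a single $M^{\otimes j}$ using Lemma \ref{nonobviouslem} to commute the $\epsilon_{j,k}$ maps appropriately. The absolute value axioms (i)--(iii) are immediate from additivity of $v$ on $M^{\otimes \bullet}$ and the fact that $\epsilon_{j,i}$ weakly decreases valuations; axiom (iv) holds because whenever $0\notin x+y$ the set in the display above is a single fiber of the reduction $S_j\to M^{\otimes j}$, which is a single element. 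For axiom (v), the fibers of $\epsilon_{j,j+n}:M^{\otimes(j+n)}\to M^{\otimes j}$ are exactly the ``closed balls'' of radius $\theta^{n+j}=\rho_H\max(|x|,|y|)$ around elements of $S_j$, which one checks by noting $\ker(\epsilon_{j,j+n})$ corresponds to elements of $M^{\otimes(j+n)}$ whose images in $R$ lie in $\mathfrak{m}_R^n=0$. Since $n<\infty$, $\rho_H>0$, so $H$ is discretely valued and is not a field.

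Finally, I would check $\mathrm{Tr}(H)\cong T$. By Lemma \ref{Rtdvr}, the associated truncated DVR has length $\tfrac{\log\rho_H}{\log\theta_H}=n=l(R)$; by construction the valuation on $H$ is the one coming from $v$ on $M^{\otimes\bullet}$, so the reduction map $\mathcal{O}_H\to R'$ factors through $R$ via an obvious ring map, which is a bijection on each valuation stratum and hence an isomorphism. The same inspection gives a canonical isomorphism $M\to M'$ intertwining the two copies of $\epsilon$, yielding the required isomorphism of triples. The essentially surjective conclusion then follows, and combined with Corollary \ref{fullcor} and Remark \ref{rescaleabs} it also explains the rescaling ambiguity.

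The main obstacle, in my view, is not any single axiom but the bookkeeping for the multivalued addition in axiom (a) and axiom (v): one must consistently track the ``truncation level'' $j=\min(v(x),v(y),v(z))$ across all three associativity groupings and verify that the resulting unions of strata agree. This is exactly where Lemma \ref{nonobviouslem} plays its role, and all the other verifications reduce to linear-algebraic identities in the modules $M^{\otimes i}$ that are already implicit in Section 6.
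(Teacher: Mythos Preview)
Your approach is correct in spirit but genuinely different from the paper's. The paper does not verify the hyperfield axioms for $\mathbf{U}(T)$ directly; instead it invokes two results of Deligne: (i) every truncated DVR $R$ arises as $\mathcal{O}/\mathfrak{m}^i$ for some DVR $\mathcal{O}$, and (ii) any isomorphism of truncated DVRs extends to an isomorphism of triples. Given these, one simply takes $H=\mathrm{Frac}(\mathcal{O})/(1+\mathfrak{m}^i)$, which is already known to be a valued hyperfield by the general quotient construction, and concludes $\mathrm{Tr}(H)\cong T$ because the underlying truncated DVRs are isomorphic. This is a three-line proof, but it imports a nontrivial structural fact (the Cohen-type lifting of truncated DVRs). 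Your route is self-contained and constructive: it builds an explicit inverse to $\mathrm{Tr}$ at the level of objects without ever lifting to a DVR, at the price of checking every hyperfield axiom by hand.

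A few imprecisions in your sketch are worth flagging, though none is fatal. For associativity you want the \emph{smallest} valuation among the summands (largest absolute value), not the largest, since that determines the module $M^{\otimes j}$ in which the computation takes place; and the relevant identity is the transitivity $\epsilon_{i,j}\circ\epsilon_{j,k}=\epsilon_{i,k}$, not Lemma~\ref{nonobviouslem} (which concerns how the $\epsilon$'s interact with a morphism $\eta$ of triples). Your justification of axiom~(iv) is also slightly off: when $z:=x+_{M^{\otimes j}}\epsilon_{j,i}(y)\neq 0$, the set $x+y$ need not be a single element, nor does it sit inside $S_j$; rather it lies entirely in $S_k$ for the single value $k=v(z)$, and that is what forces $|x+y|$ to be a singleton. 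With these corrections the verification goes through, and your final identification $\mathrm{Tr}(H)\cong T$ via the maps $\epsilon_{0,i}:S_i\to R$ and $\epsilon_{1,i}:S_i\to M$ is exactly right.
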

\begin{proof}Let $(R,M,\epsilon)$ be a triple.  Deligne has shown that for any truncated DVR $R$, there is a DVR $\mathcal{O}$ such that $R\cong \mathcal{O}/\mathfrak{m}^i$ for some $i$\cite[pg 126]{deligne}.  Let $K=\mathrm{Frac}(\mathcal{O})$.  Let $H=K/(1+\mathfrak{m}^i)$.  Let $(R',M',\epsilon')=\mathrm{Tr}(H)$.  Then $R'\cong \mathcal{O}/\mathfrak{m}^i\cong R$.  Deligne showed that an isomorphism of truncated DVRs extends to an isomorphism of triples\cite[pg 126]{deligne}; hence $\mathrm{Tr}(H)=(R',M',\epsilon')\cong (R,M,\epsilon)$.  Thus $\mathrm{Tr}$ is essentially surjective.
\end{proof}

\begin{thm}Let $H$ be a discretely valued hyperfield which is not a field.  Then $\mathrm{Tr}$ induces an equivalence of categories between the coslice category of $H$ and the coslice category of $\mathrm{Tr}(H)$.  It also induces an equivalence between the slice category of $H$ and the slice category of $\mathrm{Tr}(H)$.
\end{thm}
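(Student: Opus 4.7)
The plan is to leverage the three ingredients already assembled: faithfulness of $\mathrm{Tr}$, essential surjectivity of $\mathrm{Tr}$, and Corollary \ref{fullcor} (which lifts a morphism of triples whenever its ramification index matches the appropriate logarithmic ratio of the absolute values). The conceptual point is that the obstruction to $\mathrm{Tr}$ being fully faithful was the freedom to rescale absolute values within an equivalence class, and that this freedom is rigidified as soon as one fixes a structural morphism to or from $H$.

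For the coslice case, faithfulness of the induced functor is inherited from faithfulness of $\mathrm{Tr}$, since a morphism in a coslice category is just a morphism in the ambient category subject to a commutative triangle constraint. For essential surjectivity, let $\tilde f: \mathrm{Tr}(H) \to T$ be an object of the coslice category under $\mathrm{Tr}(H)$, with ramification index $r$. Pick by essential surjectivity of $\mathrm{Tr}$ some $H_0'$ with an isomorphism $\mathrm{Tr}(H_0') \cong T$. Use Remark \ref{rescaleabs} to rescale the absolute value of $H_0'$, obtaining a valued hyperfield $H'$ (with the same underlying hyperfield and the same triple) such that $\theta_{H'} = \theta_H^{1/r}$. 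Corollary \ref{fullcor} now produces $f: H \to H'$ with $\mathrm{Tr}(f) = \tilde f$, which gives the required object of the coslice over $H$.

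For fullness, suppose $(H_1, f_1)$ and $(H_2, f_2)$ are objects of the coslice under $H$ and that $\tilde h: \mathrm{Tr}(H_1) \to \mathrm{Tr}(H_2)$ is a coslice morphism, so $\tilde h \circ \mathrm{Tr}(f_1) = \mathrm{Tr}(f_2)$. Writing $r_i$ for the ramification index of $\mathrm{Tr}(f_i)$ and $r$ for that of $\tilde h$, the composition law for morphisms of triples forces $r \cdot r_1 = r_2$; since $r_i = \log \theta_H / \log \theta_{H_i}$, this yields $r = \log \theta_{H_1}/\log \theta_{H_2}$, which is exactly the hypothesis of Corollary \ref{fullcor}. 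That corollary therefore produces $h: H_1 \to H_2$ with $\mathrm{Tr}(h) = \tilde h$, and faithfulness of $\mathrm{Tr}$ upgrades the triple-level equality $\mathrm{Tr}(h \circ f_1) = \mathrm{Tr}(f_2)$ to $h \circ f_1 = f_2$, placing $h$ in the coslice category.

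The slice case runs identically with arrows reversed: for $f_i: H_i \to H$ one has $r_i = \log\theta_{H_i}/\log\theta_H$, and the constraint $f_2 \circ h = f_1$ forces the ramification index of $\tilde h$ to be $r_1/r_2 = \log\theta_{H_1}/\log\theta_{H_2}$, again matching Corollary \ref{fullcor}. Essential surjectivity proceeds analogously by taking some $H_0'$ with $\mathrm{Tr}(H_0') \cong T$ for a given $\tilde f: T \to \mathrm{Tr}(H)$ of ramification $r$, rescaling to arrange $\theta_{H'} = \theta_H^r$, and invoking Corollary \ref{fullcor}. I do not anticipate a serious obstacle: the real content of the theorem is merely the observation that the rescaling ambiguity of absolute values, which blocked the global functor from being full, is automatically eliminated by the choice of a structural morphism to or from $H$.
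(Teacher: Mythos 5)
Your proposal is correct and follows essentially the same route as the paper: faithfulness is inherited from that of $\mathrm{Tr}$, essential surjectivity uses essential surjectivity of $\mathrm{Tr}$ plus the rescaling trick of Remark \ref{rescaleabs} to match the ramification index, and fullness uses the multiplicativity of ramification indices under composition to force the hypothesis of Corollary \ref{fullcor}. You are in fact slightly more explicit than the paper in noting that faithfulness is what upgrades the triple-level commutativity to a genuine coslice (resp.\ slice) morphism after lifting.
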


\begin{proof}We consider the case of the coslice category; the other part is proven similarly.  Clearly this functor is faithful.  Let $(r,\phi,\eta):\mathrm{Tr}(H)\rightarrow S$ be an object of the coslice category of $\mathrm{Tr}(H)$.  Then there exists $H'$ such that $S\cong \mathrm{Tr}(H')$.  By rescaling the absolute value on $H'$ (which does not affect $\mathrm{Tr}(H')$), we may assume $r=\frac{\log \theta_H}{\log\theta_{H'}}$.  Then there is a morphism $f:H\rightarrow H'$ such that $(r,\phi,\eta)=\mathrm{Tr}(f)$.  Hence the functor between coslice categories is essentially surjective. 

Given a morphism $(r,\phi,\eta):\mathrm{Tr}(H')\rightarrow\mathrm{Tr}(H'')$ in the coslice category of $\mathrm{Tr}(H)$, $r$ is the ratio of the ramification indices of $\mathrm{Tr}(H)\rightarrow\mathrm{Tr}(H'')$ and $\mathrm{Tr}(H)\rightarrow\mathrm{Tr}(H')$.  By the discussion preceding Proposition \ref{hfmorphcons}, this is the ratio of $\frac{\log \theta_H}{\log\theta_{H''}}$ and $\frac{\log \theta_H}{\log\theta_{H'}}$. Hence $r=\frac{\log \theta_{H'}}{\log\theta_{H''}}$.  The functor between the coslice categories is full by Corollary \ref{fullcor}.
\end{proof} 

The statement of Theorem \ref{delhypthm} uses isomorphisms of hyperfields rather than isomorphisms of valued hyperfields.  It is easy to see that an isomorphism of valued hyperfields is simply a hyperfield isomorphism which preserves the absolute value. Hence, we would like the following result, which says that an isomorphism of hyperfields between two discretely valued hyperfields which aren't fields almost preserves this absolute value.

\begin{prop}\label{valirrel}Let $H,H'$ be discretely valued hyperfields which are not fields, and let $f:H\rightarrow H'$ be an isomorphism of hyperfields.  Then $|x|_H'=|x|_{H}^{\frac{\log{ \theta_{H'}}}{\log{\theta_{H}}}}$ for all $x\in H$. Furthermore, $\mathrm{Tr}(H)\cong \mathrm{Tr}(H')$. 
\end{prop}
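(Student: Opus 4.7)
The plan is to give a description of the integer-valued valuation $v$ on each side that refers only to the hyperfield structure, so that any hyperfield isomorphism automatically preserves it; the claimed identity of absolute values then follows directly, and the isomorphism of triples will follow by observing that $\mathrm{Tr}$ is invariant under rescaling the absolute value.

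The crucial observation is that the set $S := 1 + (-1)$, which is purely hyperfield-theoretic, is a ball of radius $\rho_H \max(|1|,|-1|) = \rho_H$ containing $0$; by Remark \ref{discvalrmk} this equals $\mathfrak{m}_H^k$, where $k$ is the length of $R$. Since $H$ is not a field one has $k \geq 1$ and so $\rho_H < 1$. I then recover $\mathfrak{m}_H$ by the formula
\[
\mathfrak{m}_H = \{x \in H : x^n \in S \text{ for some } n \geq 1\},
\]
valid because for $x \in H^\times$ the condition $|x|^n \leq \rho_H$ for some $n$ is equivalent to $|x| < 1$ (using $\rho_H < 1$), and trivially $0$ lies in both sides. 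All higher powers are then $\mathfrak{m}_H^j = \mathfrak{m}_H \cdots \mathfrak{m}_H$ ($j$ factors); $\mathcal{O}_H = \{0\} \cup \{x \in H^\times : x^{-1} \notin \mathfrak{m}_H\}$; and $v(x)$ is read off as $\sup\{j \geq 0 : x \in \mathfrak{m}_H^j\}$ for $x \in \mathcal{O}_H \setminus \{0\}$, as $-v(x^{-1})$ for $x \in H^\times \setminus \mathcal{O}_H$, and as $\infty$ for $x = 0$.

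Applying this reconstruction to both $H$ and $H'$, and using that $f$ commutes with the hyperfield operations (in particular it sends $1 + (-1) \subseteq H$ bijectively to $1 + (-1) \subseteq H'$), one obtains $v_{H'}(f(x)) = v_H(x)$ for every $x \in H$, from which the formula $|f(x)|_{H'} = \theta_{H'}^{v_H(x)} = |x|_H^{\log \theta_{H'}/\log \theta_H}$ is immediate. For the second claim, I would rescale the absolute value on $H'$ by the positive exponent $\log \theta_H / \log \theta_{H'}$, which turns $f$ into an isomorphism of valued hyperfields in the sense of Definition \ref{vhhom}. Since each equivalence relation $\equiv_{\rho_{H'}\theta_{H'}^j}$ used in the construction of $\mathrm{Tr}(H')$ has the form ``$d(x,y) \leq \theta_{H'}^m$'' for some integer $m$, it is preserved under rescaling by a positive exponent; thus the triple $\mathrm{Tr}(H')$ is unaffected, and $\mathrm{Tr}$ applied to the valued-hyperfield isomorphism yields an isomorphism $\mathrm{Tr}(H) \cong \mathrm{Tr}(H')$.

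The main obstacle is the first step: recognizing that the ``defect'' in addition for a valued hyperfield---embodied in the fact that $1 + (-1)$ is an entire ball rather than a single element---is precisely what allows one to extract the ideal $\mathfrak{m}_H^k$, and hence the whole valuation, from the hyperfield operations alone. Everything after that identification is mechanical bookkeeping.
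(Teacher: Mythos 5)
Your proof is correct, but it takes a genuinely different route from the paper's. The paper's proof rests on the observation that $|x|\leq|y|$ if and only if $(x-x)\subseteq(y-y)$, a relation expressible purely in hyperfield terms, so $f$ automatically preserves the \emph{ordering} of absolute values; it then needs a separate discreteness ``gap'' argument (there is no $y$ with $|\pi|<|y|<1$, and $f$ preserves this property) to pin down $|f(\pi)|=\theta_{H'}$. You instead reconstruct the valuation $v$ itself as an explicit hyperfield-theoretic formula: $S=1+(-1)$ is the ball $\mathfrak{m}_H^{k}$ with $\rho_H=\theta_H^k$, its radical $\{x: x^n\in S \text{ for some }n\}$ recovers $\mathfrak{m}_H$, its setwise powers recover the whole filtration $\{\mathfrak{m}_H^j\}$, and $v$ is read off as the supremum of membership. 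Since $f$ preserves each of these sets, $v_{H'}(f(x))=v_H(x)$ follows at once, bypassing the two-step order-then-uniformizer argument. Both proofs then invoke the rescaling trick of Remark~\ref{rescaleabs} for the second claim. One small logical quibble: you write that $H$ not being a field gives $k\geq 1$ and hence $\rho_H<1$; the dependence actually runs the other way --- $H$ not a field gives $\rho_H>0$ (so $k<\infty$), while $\rho_H<1$ (so $k\geq 1$) holds unconditionally because $\{1\}=1+0$ is a closed ball of radius $\rho_H$ around $1$ that excludes $0$, and $d(1,0)=1$. Your construction also requires (and implicitly proves, via $f(\mathfrak{m}_H^k)=\mathfrak{m}_{H'}^{k'}$) that $l(R)=l(R')$, which is worth noting explicitly. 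On balance your version makes the invariance of $v$ under hyperfield isomorphism more transparent, at the cost of a somewhat longer setup.
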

\begin{proof}Before proceeding with the proof, it is best to establish the following claim:  If $H$ is a discretely valued hyperfield which is not a field, then $|x|\leq |y|$ if and only if $(x-x)\subseteq (y-y)$.  The key to proving this claim is to note that $x-x$ is a closed ball of radius $\rho_H|x|$ centered at $0$.  Let $z\in H$ be an element with $|z|=\rho_H$; this is possible because $\rho_H$ is a power of $\theta_H$.  Suppose $x-x\subseteq y-y$. Then the closed ball of radius $\rho_H|x|$ centered at $0$ is contained in the closed ball of radius $\rho_H|y|$ so $xz$ is in this ball. Thus $\rho_H|x|=|xz|\leq \rho_H|y|$ and so $|x|\leq |y|$.  The reverse direction of the claim is similar.

Let $x,y\in H$.  Since $f$ is an isomorphism, $x-x\subseteq y-y$ if and only if $f(x)-f(x)\subseteq f(y)-f(y)$.  Hence $|x|\leq |y|$ if and only if $|f(x)|\leq |f(y)|$.  In particular, $|x|=|y|$ if and only if $|f(x)|=|f(y)|$.  Let $\pi\in H$ be a uniformizer.  For any nonzero $x\in H$, we have $|x|=\theta_H^{v(x)}=|\pi^{v(x)}|$.  Hence $|f(x)|=|f(\pi)|^{v(x)}=|x|^{\frac{\log{|f(\pi)|}}{\log{\theta_H}}}$.  To establish the first part of the proposition we must show that $|f(\pi)|=\theta_{H'}$. For all $x\in H$ we either have $|x|\leq |\pi|$ or $|x|\geq |1|$ so that either $|f(x)|\leq |f(\pi)|$ or $|f(x)|\geq |1|$.  Hence for all $y\in H'$ we have $|y|\leq |f(\pi)|$ or $|y|\geq |1|$.  Hence $f(\pi)=\theta_{H'}$ and the first claim of the proposition is established. 

For the  second part, we apply the same trick used in Remark \ref{rescaleabs}.  Let $\mathring{H}'=H'$ as hyperfields, but equip it with the absolute value rescaled in such a way that $\theta_{\mathring{H}'}=\theta_H$.  This does not affect the associated triple so $\mathrm{Tr}(\mathring{H}')=\mathrm{Tr}(H')$.  Now the isomorphism of hyperfields $f:H\rightarrow\mathring{H}'$ preserves absolute value so $\mathrm{Tr}(\mathring{H}')\cong\mathrm{Tr}(H)$.  Hence $\mathrm{Tr}(H)\cong\mathrm{Tr}(H')$.
\end{proof}

\begin{cor}Let $K$ be a local field.  Then the category of finite separable extensions $L/K$ satisfying $\Gal(K^s/L)\supseteq Gal(K^s/K)^i$ depends only on the isomorphism class of the hyperfield $K/(1+\mathfrak{m}_K^i)$.\end{cor}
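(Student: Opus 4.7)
The plan is to obtain this corollary as a direct combination of Proposition \ref{valirrel} with the analogue of Theorem \ref{delhypthm} for triples, which Deligne proved in \cite{deligne}. The point is that the work of relating hyperfields to triples has already been carried out in the body of the paper, so only a short bridging argument is needed.

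First, I would set up the reduction. Suppose $K$ and $F$ are local fields together with an isomorphism of hyperfields $K/(1+\mathfrak{m}_K^i)\cong F/(1+\mathfrak{m}_F^i)$. For $i\geq 1$ these quotients are discretely valued hyperfields which are not fields (the length computation in Lemma \ref{Rtdvr} is finite precisely because $1+\mathfrak{m}_K^i$ is a proper subgroup), so Proposition \ref{valirrel} applies and upgrades this to an isomorphism of the associated triples $\mathrm{Tr}(K/(1+\mathfrak{m}_K^i))\cong \mathrm{Tr}(F/(1+\mathfrak{m}_F^i))$.

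Second, I would identify these triples explicitly. The example computation at the end of Section \ref{tripcons} shows that $\mathrm{Tr}(K/(1+\mathfrak{m}_K^i))=(\mathcal{O}_K/\mathfrak{m}_K^i,\mathfrak{m}_K/\mathfrak{m}_K^{i+1},\epsilon)$, and similarly for $F$. These are precisely the triples to which Deligne's classification theorem applies. Deligne's result states that the category of finite separable extensions $L/K$ satisfying $\Gal(K^s/L)\supseteq\Gal(K^s/K)^i$ depends only on the isomorphism class of the triple $(\mathcal{O}_K/\mathfrak{m}_K^i,\mathfrak{m}_K/\mathfrak{m}_K^{i+1},\epsilon)$. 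Combining this with the isomorphism of triples from the previous step yields an equivalence between the corresponding categories of extensions for $K$ and $F$, which is the desired conclusion.

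There is essentially no main obstacle here: the heavy lifting has already been done by Proposition \ref{valirrel} (which shows that a purely hyperfield-theoretic isomorphism automatically rescales the absolute value in a compatible way and hence induces an isomorphism of triples) and by Deligne's theorem (which handles the triple-theoretic classification). The only minor verification is that the non-field hypothesis of Proposition \ref{valirrel} is satisfied, which is automatic for $i\geq 1$; and that the triple produced by the functor $\mathrm{Tr}$ in Section \ref{tripcons} coincides with Deligne's triple, which is exactly the content of the worked example.
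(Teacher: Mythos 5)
Your proposal matches the paper's own proof: both invoke Deligne's classification of extensions in terms of the triple $(\mathcal{O}_K/\mathfrak{m}_K^i,\mathfrak{m}_K/\mathfrak{m}_K^{i+1},\epsilon)$, identify this triple with $\mathrm{Tr}(K/(1+\mathfrak{m}_K^i))$ via the worked example, and then use Proposition \ref{valirrel} to pass from a hyperfield isomorphism to an isomorphism of the associated triples. The only difference is that you spell out the non-field check and the setup with two fields $K$ and $F$, which the paper leaves implicit.
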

\begin{proof}P. Deligne has shown in \cite{deligne} that the category of finite separable extensions $L/K$ satisfying $\Gal(K^s/L)\supseteq Gal(K^s/K)^i$ depends only upon the triple $(\mathcal{O}_K/\mathfrak{m}_K^i,\mathfrak{m}_K/\mathfrak{m}_K^{i+1},\epsilon)=\mathrm{Tr}(K/(1+\mathfrak{m}_K^i))$ associated to $K$.  By Proposition \ref{valirrel}, this in turn depends only on the isomorphism class of the hyperfield $K/(1+\mathfrak{m}_K^i)$.
\end{proof}

\section{Finite flat morphisms}
P. Deligne has given definitions of finite and flat morphisms of triples.  Finite flat morphisms of triples play in important role in \cite{deligne}, where it is shown that the category of finite separable extensions $L$ of a local field $K$ satisfying $\Gal(K^s/K)^i\subseteq \Gal(K^s/L)$ is equivalent to a category whose objects are finite flat extensions  of $\mathrm{Tr}_i(K)$ satisfying a certain ramification condition and whose morphisms are equivalence classes of morphisms of triples.  Our goal in this section is to describe what finiteness and flatness of morphisms mean in the context of discretely valued hyperfields.

\begin{defn}\cite{deligne}Given a ring $R$, we let $l(R)$ denote its length as a module over itself.  Let $(r,\phi,\eta):(R,M,\epsilon)\rightarrow (R',M',\epsilon')$ be a morphism of triples.  The morphism is called flat if $l(R')=l(R)r$. It is finite if $\phi:R\rightarrow R'$ is.\end{defn}

The analogous definitions for valued hyperfields are the following.

\begin{defn}\label{finflatdef}We say a morphism $f:H\rightarrow H'$ is finite if there is a finite subset $S\subseteq \mathcal{O}_{H'}$ such that for all $x\in \mathcal{O}_{H'}$, there is a family of elements $a_s\in \mathcal{O}_H$ indexed by $S$ such that $x\in\displaystyle\sum_{s\in S} f(a_s)s$.  We say that a morphism $f$ is flat if $\rho_H=\rho_{H'}$.\end{defn}

We now check the definitions for discretely valued hyperfields and for triples coincide.

\begin{prop}\label{finflat1}Let $f:H\rightarrow H'$ be a morphism of discretely valued hyperfields which are not fields.  Then the morphism of triples $\mathrm{Tr}(f)$ is finite if $f$ is finite and is flat if $f$ is flat.  In particular, $\mathrm{Tr}$ induces a functor from the category of discretely valued hyperfields which are not fields with finite flat morphisms to the category of triples and finite flat morphisms.\end{prop}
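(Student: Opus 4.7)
The statement asks us to verify two independent implications: flatness of $f$ (in the sense of Definition~\ref{finflatdef}) yields flatness of $\mathrm{Tr}(f)$, and similarly for finiteness. Both are a matter of translating the hyperfield-level hypothesis into the corresponding statement about $R$, $R'$, and $\phi$.

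For flatness, the plan is to do a one-line computation using Lemma~\ref{Rtdvr}. Recall that the ramification index of $\mathrm{Tr}(f)$ is defined by $r = \log\theta_H / \log\theta_{H'}$, and Lemma~\ref{Rtdvr} tells us that $l(R) = \log\rho_H/\log\theta_H$ and $l(R') = \log\rho_{H'}/\log\theta_{H'}$. Under the flatness hypothesis $\rho_H = \rho_{H'}$ we therefore compute
\[
l(R)\cdot r \;=\; \frac{\log\rho_H}{\log\theta_H}\cdot\frac{\log\theta_H}{\log\theta_{H'}} \;=\; \frac{\log\rho_H}{\log\theta_{H'}} \;=\; \frac{\log\rho_{H'}}{\log\theta_{H'}} \;=\; l(R'),
\]
which is exactly flatness of $\mathrm{Tr}(f)$ in Deligne's sense.

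For finiteness, the plan is to show that if $S \subseteq \mathcal{O}_{H'}$ is a finite subset satisfying the condition of Definition~\ref{finflatdef}, then the image $\overline{S} \subseteq R'$ generates $R'$ as an $R$-module via $\phi$. Given $y \in R'$, pick a lift $x \in \mathcal{O}_{H'}$ and choose $a_s \in \mathcal{O}_H$ with $x \in \sum_{s\in S} f(a_s)\, s$ in the hyperfield sense. The key observation is that by the definition of the (single-valued) addition and multiplication on $R'$ from Section~\ref{tripcons}, reduction $\mathcal{O}_{H'} \to R'$ sends any element of a hyper-sum $\hat{u} + \hat{v}$ to $\bar{\hat{u}} + \bar{\hat{v}}$, and is multiplicative. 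Therefore reducing the inclusion $x \in \sum f(a_s)s$ modulo $\equiv_{\rho_{H'}}$ yields the equality $y = \sum_{s\in S} \phi(\bar{a}_s)\,\bar{s}$ in $R'$, proving that $\overline{S}$ generates $R'$ as an $R$-module and hence that $\phi$ is finite.

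The combined statement about functoriality is then immediate: finite flat morphisms are preserved under $\mathrm{Tr}$ because each property is preserved individually, and we already know from Section~\ref{func} that $\mathrm{Tr}$ is a functor. I do not anticipate any genuine obstacle here; the only point requiring mild care is the verification that reduction mod $\equiv_{\rho_{H'}}$ commutes with iterated hyper-sums, which follows by induction from the well-definedness of addition on $R'$ established in the proof that $M_0$ is a ring.
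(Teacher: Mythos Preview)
Your proposal is correct and essentially identical to the paper's own proof: the flatness computation via Lemma~\ref{Rtdvr} and the ramification-index identity $r=\log\theta_H/\log\theta_{H'}$ is the same one-line calculation, and the finiteness argument (lift $\bar{x}$, write the lift as a hyper-sum $\sum f(a_s)s$, then reduce mod $\equiv_{\rho_{H'}}$ to get an ordinary $R$-linear combination in $R'$) is exactly what the paper does. Your remark that the only point needing care is that reduction collapses iterated hyper-sums to ordinary sums is apt, and the paper handles it the same way, by appeal to the well-definedness of addition on $R'$ from Section~\ref{tripcons}.
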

\begin{proof}
Suppose $f$ is finite.  Then let $S\subseteq \mathcal{O}_{H'}$ be a finite generating set, i.e. a finite subset such that for all $x\in \mathcal{O}_{H'}$, there is a family of elements $a_s\in\mathcal{O}_{H}$ indexed by $S$ such that $x\in\displaystyle\sum_{s\in S} f(a_s)s$.    Let $R$ and $R'$ be the truncated DVRs associated to $H$ and $H'$, as in Lemma \ref{Rtdvr}.  Let $\bar{x}\in R'$.  Let $x\in\mathcal{O}_{H'}$ be a lift.  Then we can pick elements $a_s\in \mathcal{O}_H$ such that $x\in \displaystyle\sum_{s\in S} f(a_s)s$.  Reducing to the quotient and defining $\phi$ in the same manner used just above Proposition \ref{phiwelldef} gives $\bar{x}\in \displaystyle\sum_{s\in S} \phi(\overline{a_s})\bar{s}$.  Hence $\phi$ makes $R'$ into an $R$-module which is generated by the finite set $\{\bar{s}\mid s\in S\}$.

Suppose $f$ is flat.  Then $\rho_H=\rho_{H'}$ and we denote the common value by $\rho$. By Lemma \ref{Rtdvr}, $\frac{\log \rho}{\log\theta_H}=l(R)$ and $\frac{\log \rho}{\log\theta_{H'}}=l(R')$ so $l(R')=\frac{\log\theta_H}{\log\theta_{H'}}l(R)=rl(R)$.  Hence $\mathrm{Tr}(f)$ is flat.
\end{proof}

\begin{thm}$f$ is finite and flat if and only if $\mathrm{Tr}(f)$ is.\end{thm}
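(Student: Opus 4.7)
The forward direction ($f$ finite and flat $\Rightarrow$ $\mathrm{Tr}(f)$ finite and flat) is Proposition~\ref{finflat1}. For the converse, assume $\mathrm{Tr}(f) = (r,\phi,\eta)$ is finite and flat. Flatness of $f$ is a short calculation: Lemma~\ref{Rtdvr} gives $l(R) = \log\rho_H/\log\theta_H$ and $l(R') = \log\rho_{H'}/\log\theta_{H'}$, while by construction $r = \log\theta_H/\log\theta_{H'}$; substituting these into $l(R') = r\,l(R)$ yields $\log\rho_{H'} = \log\rho_H$, hence $\rho_H = \rho_{H'}$, which is flatness of $f$.

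For finiteness I would construct an explicit finite set $S\subseteq\mathcal{O}_{H'}$. From finiteness and flatness of $\mathrm{Tr}(f)$ one deduces that $R'$ is a free $R$-module of rank $rn$, where $n=[k':k]$: the filtration of $R'$ by powers of $\mathfrak{m}_{R'}$ gives $l_R(R') = n\cdot l(R') = rn\cdot l(R)$, while the number of cyclic summands is $\dim_k R'/\mathfrak{m}_R R' = rn$, forcing each cyclic summand to be isomorphic to $R$. Fix units $u_1,\dots,u_n\in\mathcal{O}_{H'}^\times$ whose classes form a $k$-basis of $k'$ and a uniformizer $\pi_{H'}$, so that $\{\bar u_i\bar\pi_{H'}^j : 0\le j<r\}$ is an $R$-basis of $R'$; and let $S = \{u_i\pi_{H'}^j : 1\le i\le n,\ 0\le j<l(R')\}$.

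The claim that $S$ witnesses finiteness of $f$ I would verify in two stages. First (unit case), for $x\in\mathcal{O}_{H'}^\times$, expand $\bar x = \sum\phi(\bar b_{ij})\bar u_i\bar\pi_{H'}^j$ in the $R$-basis and lift to $b_{ij}\in\mathcal{O}_H$ of matching valuation. Because $\bar x$ is a unit in $R'$, some $\bar b_{i0}$ must be a unit in $R$, so $|b_{i0}|=1$; the hyperfield sum $\sum f(b_{ij})u_i\pi_{H'}^j$ is then a closed ball of radius at least $\rho_{H'}$, and any of its elements $z$ satisfies $\bar z=\bar x$ in $R'$, i.e., $|z-x|\le\rho_{H'}$, so $x$ lies in the ball. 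Second (general case), write a nonzero $x$ as $u_x\pi_{H'}^m$, apply the unit case to $u_x$, and multiply by $\pi_{H'}^m$ using the distributivity of single-valued multiplication over hyperaddition, obtaining $x\in\sum f(b_{ij})u_i\pi_{H'}^{j+m}$. For indices with $j+m<l(R')$ the generators already lie in $S$; for $j+m\ge l(R')$, use $\pi_{H'}^r = u_0^{-1}f(\pi_H)$ with $u_0 := f(\pi_H)/\pi_{H'}^r$ to rewrite $\pi_{H'}^{j+m}=u_0^{-q}f(\pi_H^q)\pi_{H'}^{j''}$ where $j+m=qr+j''$ and $0\le j''<r$; this moves $f(\pi_H^q)$ into the $\mathcal{O}_H$-coefficient and leaves the unit $u_0^{-q}u_i$, which is then re-expanded via the unit case and the process iterated if necessary.

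The main obstacle is controlling this iterative reduction. Each step replaces a generator outside $S$ by a hyperfield sum of $S$-generators, and one must verify both termination and preservation of the containment $x\in\cdots$ at every stage. Termination holds because successive coefficients accumulate factors of $\pi_H$, and once these have absolute value $\le\rho_H$ their contribution is dominated by the unit-leading term, whose ball of radius $\rho_{H'}$ absorbs them. Flatness ($\rho_H=\rho_{H'}$) is essential here: it is precisely the hypothesis that makes residual contributions of size $\le\rho_{H'}$ fit inside the ball produced by the unit-leading term, so the finiteness and flatness hypotheses are genuinely used together in the final argument.
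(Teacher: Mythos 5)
Your treatment of flatness is fine and matches the paper. The forward direction of finiteness is also correct: it is just Proposition~\ref{finflat1}. The problem lies in the converse direction of finiteness, where your iterative reduction has a genuine gap.

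The issue is the decomposition $x = u_x\pi_{H'}^m$. After you multiply the unit-case ball by $\pi_{H'}^m$, you obtain a ball of radius $\rho_{H'}\theta_{H'}^m = \rho_{H'}|x|$ containing $x$. The terms you wish to ``absorb'' --- those with $j+m\ge l(R')$ --- have absolute value $\le \theta_{H'}^{l(R')} = \rho_{H'}$, but this is \emph{larger} than the relevant ball radius $\rho_{H'}|x|$ whenever $m>0$ (since $|x|=\theta_{H'}^m<1$). So these terms are not dominated by the leading term; removing them (or re-expanding and iterating) can genuinely move the ball, and the termination claim is not justified as stated. You would have to track the iteration much more carefully, and it is not obvious it converges.

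The paper avoids the issue entirely by decomposing $x$ differently: writing $v(x) = i\cdot r + j$ with $0\le j<r$, one sets $y = f(\pi_H)^{-i}\pi_{H'}^{-j}x$, which has absolute value $1$. Applying the unit case to $y$ gives $y\in\sum_k f(a_k)\alpha_k$ with $a_k\in\mathcal{O}_H$, and multiplying back gives $x\in\sum_k f(\pi_H^i a_k)\alpha_k\pi_{H'}^j$ directly --- each coefficient $\pi_H^i a_k$ lies in $\mathcal{O}_H$ and each generator $\alpha_k\pi_{H'}^j$ already lies in the finite set $\{\alpha_k\pi_{H'}^j : 1\le k\le n,\, 0\le j<r\}$. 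No iteration, no absorption argument. This is the decomposition you should use instead of $x = u_x\pi_{H'}^m$; the single Euclidean division by $r$ is what collapses the whole reduction to one step.

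One further remark: your claim that flatness is ``essential'' to the finiteness argument is an artifact of your approach, not a feature of the theorem. The paper's proof that $\mathrm{Tr}(f)$ finite implies $f$ finite does not invoke flatness at all --- it only needs a finite generating set $\bar\alpha_1,\dots,\bar\alpha_n$ for $R'$ over $R$, lifted arbitrarily, not a free $R$-basis. Your proof needs flatness only because you insist on a basis rather than a generating set; dropping that requirement decouples the two properties, which is how the paper handles them.
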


\begin{proof}We only need to show that finite flat morphisms of discretely valued hyperfields correspond to finite flat morphisms of triples.  For flatness, one uses the flatness part of  Propostion \ref{finflat1} and its converse (which is proven in the same way).  We have seen that if $f$ is finite then so is $Tr(f)$.  Let $f:H\rightarrow H'$ be a morphism of triples such that $\mathrm{Tr}(f)$ is finite.  Let $\mathrm{Tr}(H)=(R,M,\epsilon)$ and $\mathrm{Tr}(H')=(R',M',\epsilon')$, so $\phi:R\rightarrow R'$ is finite; the generators will be denoted $\bar{\alpha_1},\ldots,\bar{\alpha_n}$, and their lifts in $\mathcal{O}_{H'}$ will be denoted $\alpha_1,\ldots,\alpha_n$.  
Let $x\in H'$ have absolute value 1.  Then there are $a_1,\ldots,a_n\in \mathcal{O}_H$  such that $\bar{x}=\phi(\bar{a_1})\bar{\alpha_1}+\ldots+\phi(\bar{a_n})\bar{\alpha_n}$.  Using the definition of addition in $R'$, there exists $y\in H'$ such that $\bar{x}=\bar{y}$ and $y\in f(a_1)\alpha_1+\ldots f(a_n)\alpha_n$.  Because $|x|=1$, $x$ is the unique lift of $\bar{x}$, and so $x\in f(a_1)\alpha_1+\ldots+f(a_n)\alpha_n$.

We now let $x\in \mathcal{O}_H'$ be arbitrary.  Let $r$ be the ramification index.  Let $\pi_H\in H$ and $\pi_{H'}\in H'$ be uniformizers.  Then there exist $i,j>0$ such that $|f(\pi_H)^{i}\pi_{H'}^{j}|=|x|$ and $0\leq j<r$.  Then applying the above argument to $f(\pi_H)^{-i}\pi_{H'}^{-j}x$ shows there are $a_1,\ldots,a_n$ such that $x\in f(\pi_H^{i}a_1)\alpha_1\pi_{H'}^j+\ldots+f(\pi_H^{i}a_n)\alpha_n\pi_{H'}^j$.  Note that for all $k$, $\pi_h^i a_k\in \mathcal{O}_H$ and $\pi_{H'}^j\alpha_k\in \mathcal{O}_{H'}$, and so $x$ is in the span of the family of elements $\alpha_k\pi_{H'}^j$ with $0\leq j<r$.  Hence $f$ is finite.\end{proof}


\begin{thebibliography}{1}
\bibitem{monoidstohyper}A. Connes, C. Consani, \textit{From monoids to hyperstructures: in search of an absolute arithmetic}, in Casimir Force, Casimir Operators and the Riemann Hypothesis, de Gruyter (2010), pp. 147-198.
\bibitem{thickreals}A. Connes, C. Consani, \textit{The universal thickening of the field of real numbers}, arXiv:0420079v1 [mathNT] (2012).  To appear
on Advances in the Theory of Numbers Thirteenth Conference of the Canadian Number
Theory Association (2014).
\bibitem{adeleclasses}A. Connes, C. Consani, \textit{The hyperring of ad\`ele classes},  Journal of 
Number Theory 131.2 (2011): 159-194.
\bibitem{deligne}P. Deligne, \emph{Les corps locaux de caract\'eristique p, limits de corps locaux de caract\'eristique 0}, J.-N. Bernstein, P. Deligne, D. Kazhdan, M.-F. Vigneras, Representations des groupes reductifs sur un corps local, Travaux en cours, Hermann, Paris, pp.119-157,
1984).
\bibitem{krasner}Marc Krasner, \emph{Approximation des corps valu\'es complets de caract\'eristique $p\neq 0$ par ceux caract\'eristique 0}, 1957 Colloque d'alg\'ebre sup\'erieure, tenu \'a Bruxelles du 19 au 22 d\'ecembre.
\bibitem{lyndon}R. C. Lyndon, \textit{Relation algebras and projective geometries}, Michigan Math. J. 8 1961 21-28.
\bibitem{marshall}M. Marshall, \textit{Real reduced multirings and multifields}, Journal of Pure and Applied Algebra, Volume 205, Issue 2, pp. 452-468, 2006.
\bibitem{marshallreview}M. Marshall, \textit{Book Review: Valuations, orderings and Milnor $K$-theory}, Bulletin of the American Mathematical Society (Impact Factor: 2.11). 07/2007; 45(3):439-444. DOI: 10.1090/S0273-0979-07-01166-4.  Available at \url{http://www.ams.org/journals/bull/2008-45-03/S0273-0979-07-01166-4/S0273-0979-07-01166-4.pdf}.
\bibitem{marty} F. Marty, \textit{Sur une g\'en\'eralisation de la notion de groupe}, Huiti\`eme Congres des Math\'ematiciens, Stockholm 1934, 45--59.
\bibitem{prenowitz} W. Prenowitz, \textit{Projective Geometries as Multigroups}, American Journal of Mathematics, Vol. 65, No. 2 (1943), pp. 235--256.
\bibitem{serre}Jean-Pierre Serre, 
\emph{Local Fields}.
Springer-Verlag, 
2nd edition, 1995.
\bibitem{thesis}J. Tolliver, \textit{Hyperstructures and Idempotent Semistructures}, PhD thesis, Johns Hopkins University, 2015.
\bibitem{viro}O. Viro, \textit{Hyperfields for tropical geometry I, hyperfields and dequantization}, arXiv:1006.3034v2.
\bibitem{wintenberger}
J.-P. Wintenberger, \emph{Le corps des normes de certaines extensions infinies des corps locaux; applications}, Annales Scientifiques de l'\'{E}cole Normale Sup\'{e}rieure, S\'{e}r. 4, 16 no. 1 (1983), pp. 59-89.
\end{thebibliography}
\end{document}